\definecolor{blue}{rgb}{0.1,0.2,0.5}
\definecolor{brown}{rgb}{0.6,0.6,0.2}
\newcommand*\patchAmsMathEnvironmentForLineno[1]{%
  \expandafter\let\csname old#1\expandafter\endcsname\csname #1\endcsname
  \expandafter\let\csname oldend#1\expandafter\endcsname\csname end#1\endcsname
  \renewenvironment{#1}%
     {\linenomath\csname old#1\endcsname}%
     {\csname oldend#1\endcsname\endlinenomath}}%
\newcommand*\patchBothAmsMathEnvironmentsForLineno[1]{%
  \patchAmsMathEnvironmentForLineno{#1}%
  \patchAmsMathEnvironmentForLineno{#1*}}%
\theoremstyle{plain}
\newtheorem{theorem}{Theorem}
\newcommand{\newtheoremwithcrefformat}[2]{%
  \newtheorem{#1}[theorem]{#2}%
  \crefformat{#1}{##2\MakeUppercase#1~##1##3}%
  \Crefformat{#1}{##2\MakeUppercase#1~##1##3}%
}
\newcommand{\newseptheoremwithcrefformat}[2]{%
  \newtheorem{#1}{#2}%
  \crefformat{#1}{##2\MakeUppercase#1~##1##3}%
  \Crefformat{#1}{##2\MakeUppercase#1~##1##3}%
}
\newcommand{\newclaimwithcrefformat}[2]{%
  \newtheorem{#1}{#2}[theorem]%
  \crefformat{#1}{##2\MakeUppercase#1~##1##3}%
  \Crefformat{#1}{##2\MakeUppercase#1~##1##3}%
}
\theoremstyle{definition}
\newtheorem*{remark*}{Remark}
\theoremstyle{nonumberplain}
\newcommand{\problemdef}[3]{
	\begin{center}
		\begin{boxedminipage}{.99\textwidth}
			\textsc{{#1}}\\[2pt]
			\begin{tabular}{ r p{0.8\textwidth}}
				\textit{Instance:} & {#2}\\
				\textit{Question:} & {#3}
			\end{tabular}
		\end{boxedminipage}
	\end{center}
}
\newcommand{\NP}{\textsf{NP}}
\newcommand{\cA}{\mathcal{A}}
\newcommand{\cC}{\mathcal{C}}
\newcommand{\cE}{\mathcal{E}}
\newcommand{\cF}{\mathcal{F}}
\newcommand{\cG}{\mathcal{G}}
\newcommand{\cH}{\mathcal{H}}
\newcommand{\cI}{\mathcal{I}}
\newcommand{\cM}{\mathcal{M}}
\newcommand{\cS}{\mathcal{S}}
\newcommand{\cX}{\mathcal{X}}
\newcommand{\cY}{\mathcal{Y}}
\newcommand{\tG}{\widetilde{G}}
\newcommand{\tX}{\widetilde{X}}
\newcommand{\tE}{\widetilde{E}}
\newcommand{\N}{\mathbb{N}}
\newcommand{\vphi}{\varphi}
\renewcommand{\epsilon}{\varepsilon}
\renewcommand{\phi}{\varphi}
\newcommand{\Oh}{\mathcal{O}}
\newcommand{\wei}{\mathfrak{w}}
\renewcommand{\leq}{\leqslant}
\renewcommand{\geq}{\geqslant}
\renewcommand{\setminus}{-}
\newcommand{\MIS}{\textsc{MIS}\xspace}
\newcommand{\MWM}{\textsc{MWM$^*$}\xspace}
\newcommand{\coloring}[1]{\ensuremath{#1}\textsc{-Coloring}\xspace}
\newcommand{\colorext}[1]{\ensuremath{#1}\textsc{-ColoringExt}\xspace}
\newcommand{\lcoloring}[1]{\textsc{List} \ensuremath{#1}\textsc{-Coloring}\xspace}
\newcommand{\tw}[1]{{\operatorname{tw}(#1)}}
\newenvironment{claimproof}[1][Proof of Claim.]{\noindent {\emph{#1} }}{\hfill$\blacksquare$\medskip}
\declaretheorem[sibling=theorem]{lemma}
\newcommand{\itte}{\textsc{ITTE}\xspace}
\title{Computing homomorphisms in hereditary graph classes:\\
the peculiar case of the 5-wheel and graphs with no long claws%
\footnote{MD, MP, and PRz were supported by Polish National Science Centre grant no. 2018/31/D/ST6/00062.
KO was supported by Polish National Science Centre grant no. 2021/41/N/ST6/01507}
}
\author{Michał Dębski \and Zbigniew Lonc \and Karolina Okrasa \and Marta Piecyk \and Paweł Rzążewski}
\date{}
\begin{document}
\maketitle

\begin{abstract}
For graphs $G$ and $H$, an $H$-coloring of $G$ is an edge-preserving mapping from $V(G)$ to $V(H)$.
In the \textsc{$H$-Coloring} problem the graph $H$ is fixed and we ask whether an instance graph $G$ admits an $H$-coloring.
A generalization of this problem is \textsc{$H$-ColoringExt}, where some vertices of $G$ are already mapped to vertices of $H$ and we ask if this partial mapping can be extended to an $H$-coloring.

We study the complexity of variants of \textsc{$H$-Coloring} in $F$-free graphs, i.e., graphs excluding a fixed graph $F$ as an induced subgraph. For integers $a,b,c \geq 1$, by $S_{a,b,c}$ we denote the graph obtained by identifying one endvertex of three paths on $a+1$, $b+1$, and $c+1$ vertices, respectively. For odd $k \geq 5$, by $W_k$ we denote the graph obtained from the $k$-cycle by adding a universal vertex.

As our main algorithmic result we show that \textsc{$W_5$-ColoringExt} is polynomial-time solvable in $S_{2,1,1}$-free graphs. 
This result exhibits an interesting non-monotonicity of \textsc{$H$-ColoringExt} with respect to taking induced subgraphs of $H$. Indeed, $W_5$ contains a triangle, and \textsc{$K_3$-Coloring}, i.e., classical 3-coloring,
is \textsf{NP}-hard already in claw-free (i.e., $S_{1,1,1}$-free) graphs.
Our algorithm is based on two main observations:
\begin{enumerate}
\item \textsc{$W_5$-ColoringExt} in $S_{2,1,1}$-free graphs can be in polynomial time reduced to a variant of the problem of finding an independent set intersecting all triangles, and
\item the latter problem can be solved in polynomial time in $S_{2,1,1}$-free graphs.
\end{enumerate}

We complement this algorithmic result with several negative ones. In particular, we show that \textsc{$W_5$-ColoringExt} 
is \textsf{NP}-hard in $S_{3,3,3}$-free graphs. This is again uncommon, as usually problems that are \textsf{NP}-hard in $S_{a,b,c}$-free graphs for some constant $a,b,c$ are already hard in claw-free graphs.

\end{abstract}

\newpage
\section{Introduction}
Many computationally hard problems become tractable when restricted to instances with some special properties.
In case of graph problems, typical families of such special instances come from forbidding certain substructures.
For example, for a family $\cF$ of graphs, a graph $G$ is \emph{$\cF$-free} if it does not contain any graph from $\cF$ as an induced subgraph. If $\cF$ contains a single graph $F$, then we write $F$-free instead of $\{F\}$-free.
Classes defined by forbidden induced subgraphs are \emph{hereditary}, i.e., closed under vertex deletion.
Conversely, every hereditary class of graphs can be uniquely characterized by a (possibly infinite) set of minimal forbidden induced subgraphs.

Let us define two families of graphs that play a special role as forbidden induced subgraphs.
For $t\geq 1$, by $P_t$ we denote the path in $t$ vertices.
For $a,b,c\geq 1$, by $S_{a,b,c}$ we mean the graph consisting of three paths $P_{a+1},P_{b+1},P_{c+1}$ with one endvertex identified. Each graph $S_{a,b,c}$ is called a \emph{subdivided claw}.
The smallest subdivided claw, i.e., $S_{1,1,1}$ is the \emph{claw}, and the graph $S_{2,1,1}$ is sometimes called the \emph{fork} or the \emph{chair}.
Finally, let $\cS$ denote the class of graphs whose every component is a path or a subdivided claw. 

\paragraph{\MIS and \coloring{k} in $F$-free graphs.}
Two problems, whose complexity in hereditary graph classes attracts significant attention, are \textsc{Max (Weighted) Independent Set} (denoted by \MIS) and \coloring{k}. Let us briefly survey known results, focusing on $F$-free graphs for connected $F$.
 
By the observation of Alekseev~\cite{Alekseev82}, \MIS is \NP-hard in $F$-free graphs, unless $F \in \cS$.
On the positive side, polynomial-time algorithms are known for some small graphs $F \in \cS$.
If $F = P_t$, then the polynomial-time algorithm for $t \leq 5$ was provided by Lokshtanov et al.~\cite{LVV},
which was later extended to $t \leq 6$ by Grzesik et al.~\cite{GrzesikKPP19}.
The case of $t=7$ remains open and the general belief is that for every $t$ the problem is polynomial-time solvable.
The evidence is given by the existence of quasipolynomial-time algorithms~\cite{GartlandL20,DBLP:conf/sosa/PilipczukPR21}.

The polynomial-time algorithm for \MIS in claw-free graphs~\cite{Sbihi80,Minty80} can be obtained by an extension of the augmenting path approach used for finding largest matchings~\cite{edmonds_1965}; note that a maximum matching is precisely a largest independent set in the line graph, and line graphs are in particular claw-free.
There are also more modern approaches, based on certain decompositions of claw-free graphs~\cite{FaenzaOS14,DBLP:journals/mp/NobiliS21}.
A polynomial-time algorithm for \MIS in $S_{2,1,1}$-free graphs was first obtained by Alekseev~\cite{Al04} (only for the unweighted case),
and later an arguably simpler algorithm was provided by Lozin and Milani\v{c}~\cite{DBLP:journals/jda/LozinM08} (also for the weighted case).
Again, the complexity of \MIS in $F$-free graphs for larger subdivided claws $F$ remains open, but all these cases are believed to be tractable.
This belief is supported by the existence of a subexponential-time algorithm~\cite{DBLP:journals/corr/abs-1907-04585,DBLP:journals/corr/abs-2203-04836}, a QPTAS~\cite{chudnovsky2020quasi,DBLP:journals/corr/abs-1907-04585,DBLP:journals/corr/abs-2203-04836}, or a polynomial-time algorithm in the bounded-degree case~\cite{ACDR21}.

If it comes to \coloring{k}, then it follows from known results that if $F$ is not a forest of paths,
then for every $k \geq 3$ the problem is \NP-hard in $F$-free graphs~\cite{DBLP:journals/corr/Golovach0PS14,DBLP:journals/jal/LevenG83,DBLP:journals/cpc/Emden-WeinertHK98,Holyer1981TheNO}.
The complexity of \coloring{k} in $P_t$-free graphs is quite well understood.
For $t =5$, the problem is polynomial-time solvable for every constant $k$~\cite{DBLP:journals/algorithmica/HoangKLSS10}.
If $k \geq 5$, then the problem is \NP-hard already in $P_6$-free graphs~\cite{DBLP:journals/ejc/Huang16}.
The case of $k=4$ is polynomial-time solvable for $t \leq 6$~\cite{DBLP:conf/soda/SpirklCZ19} and \NP-hard for $t \geq 7$~\cite{DBLP:journals/ejc/Huang16}.
The case of $k=3$ is much more elusive.
We know a polynomial-time algorithm for $P_7$-free graphs~\cite{DBLP:journals/combinatorica/BonomoCMSSZ18} and the cases for all $t \geq 8$ are open.
The general belief that they should be tractable is again supported by the existence of a quasipolynomial-time algorithm~\cite{DBLP:conf/sosa/PilipczukPR21}.

Let us mention two generalizations of \coloring{k}.
In the \colorext{k} problem we are given a graph $G$ with a subset of its vertices colored using $k$ colors,
and we ask whether this partial coloring can be extended to a proper $k$-coloring of $G$.
In the even more general \lcoloring{k} problem, each vertex of the instance graph $G$ is equipped with a subset of $\{1,\ldots,k\}$ called \emph{list}, and we ask whether there exists a proper $k$-coloring of $G$ respecting all lists.
Clearly any tractability result for a more general problem implies the same result for a less general one,
and any hardness result for a less general problem implies the same hardness for more general variants.
In almost all mentioned cases the algorithms for \coloring{k} generalize to \lcoloring{k}.
The only exception is the case $k=4$ and $t=6$: the polynomial time algorithm for \coloring{4} in $P_6$-free graphs can be generalized to \colorext{4}~\cite{DBLP:conf/soda/SpirklCZ19}, but \lcoloring{4} in this class is \NP-hard~\cite{DBLP:journals/iandc/GolovachPS14}

\paragraph{Minimal obstructions.}
One of the ways of designing polynomial-time algorithms for \coloring{k} is to check is the instance graph contains some (hopefully small) subgraph that is not $k$-colorable. This approach is formalized by the notion of \emph{critical graphs}.
A graph $G$ is \emph{$(k+1)$-vertex critical} if it is not $k$-colorable, but its every induced subgraph is. Such graphs can be thought of \emph{minimal obstructions} to $k$ coloring: a graph $G$ is $k$-colorable if and only if it does not contain any $(k+1)$-vertex-critical graph as an induced subgraph. 
Thus if for some hereditary class $\cG$ of graphs, the number of $(k+1)$-vertex critical graphs is finite,
we immediately obtain a polynomial-time algorithm for \coloring{k} in graphs from $\cG$.
Indeed, it is sufficient
to check if the instance graph contains any $(k+1)$-vertex-critical induced subgraph, which can be done in polynomial time by brute-force.
Such an algorithm, in addition to solving the instance, provides a \emph{certificate} in case of the negative answer -- a constant-size subgraph which does not admit a proper $k$-coloring.
Thus the question whether for some class $\cG$, the number of $(k+1)$-vertex critical graphs is finite,
can be seen as a refined analysis of the polynomial cases of \coloring{k}. 

The finiteness of the families of $(k+1)$-vertex critical in $F$-free graphs is fully understood.
Recall that the only interesting (i.e., not known to be \NP-hard) cases are for $F$ being a forest of paths.
Again focusing on connected $F$, i.e., \coloring{k} of $P_t$-free graphs,
we know that the families of minimal obstructions are finite for $t \leq 6$ and $k=4$~\cite{DBLP:journals/jct/ChudnovskyGSZ20},
and for $t \leq 4$ and any $k$. The latter result follows from the fact that $P_4$-free graphs are perfect and thus the only obstruction for $k$-coloring is $K_{k+1}$.
In all other cases there are constructions of infinite families of minimal obstructions~\cite{DBLP:journals/jct/ChudnovskyGSZ20,DBLP:journals/dam/HoangMRSV15}.

\paragraph{Graph homomorphisms in $F$-free graphs.}
A homomorphism from a graph $G$ to a graph $H$ is a mapping from $V(G)$ to $V(H)$ that preserves edges, i.e.,
the image of every edge of $G$ is an edge of $H$.
Note that if $H$ is the complete graph on $k$ vertices, then homomorphisms to $H=K_k$ are exactly proper $k$-colorings.
For this reason homomorphisms to $H$ are called \emph{$H$-colorings}, we will also refer to vertices of $H$ as \emph{colors}.
In the \coloring{H} problem the graph $H$ is fixed and we need to decide whether an instance graph $G$ admits a homomorphism to $H$.
By the analogy to coloring, we also define more general variants, i.e., \colorext{H} and \lcoloring{H}.
In the former problem we ask whether a given partial mapping from vertices of an instance graph $G$ to $V(H)$ can be extended to a homomorphism,
and in the latter one each vertex of $G$ is equipped with a list which is a subset of $V(H)$ and we look for a homomorphism respecting all lists.

The complexity of \coloring{H} was settled by Hell and Ne\v{s}et\v{r}il~\cite{DBLP:journals/jct/HellN90}: the problem is polynomial-time solvable if $H$ is bipartite or has a vertex with a loop, and \NP-hard otherwise. The dichotomy is also known for \lcoloring{H}~\cite{FEDER1998236,DBLP:journals/combinatorica/FederHH99,DBLP:journals/jgt/FederHH03}: this time the tractable cases are the so-called \emph{bi-arc graphs}.
The case of \colorext{H} is more tricky. The classification follows from the celebrated proof of the CSP complexity dichotomy~\cite{DBLP:conf/focs/Bulatov17,DBLP:journals/jacm/Zhuk20},
but the graph-theoretic description of polynomial cases is unknown (and probably difficult to obtain).

We are very far from understanding the complexity of variants of \coloring{H} in $F$-free graphs.
Chudnovsky et al.~\cite{DBLP:conf/esa/ChudnovskyHRSZ19} showed that \lcoloring{C_k} for $k \in \{5,7\} \cup [9,\infty)$ is polynomial-time solvable in $P_9$-free graphs. On the negative side, they showed that for every $k \geq 5$ the problem is \NP-hard in $F$-free graphs,
unless $F \in \cS$.
This negative result was later extended by Piecyk and Rzążewski~\cite{DBLP:conf/stacs/PiecykR21} who showed that if $H$ is not a bi-arc graph,  then \lcoloring{H} is \NP-hard and cannot be solved in subexponential time (assuming the ETH) in $F$-free graphs, unless $F \in \cS$.

The case of forbidden path or subdivided claw was later investigated by Okrasa and Rzążewski~\cite{DBLP:conf/stacs/OkrasaR21}.
They defined a class of \emph{predacious graphs}
and showed that if $H$ is not predacious, then for every $t$, the \lcoloring{H} problem
can be solved in quasipolynomial time in $P_t$-free graphs.
On the other hand, for every predacious $H$ there exists $t$
for which \lcoloring{H} cannot be solved in subexponential time in $P_t$-free graphs unless the ETH fails.
They also provided some partial results for the case of forbidden subdivided claws.

Finally, Chudnovsky et al.~\cite{DBLP:conf/esa/ChudnovskyKPRS20} considered a generalization of \lcoloring{H} in $P_5$-free graphs and related classes.

The notion of vertex-critical graphs can be naturally translated to $H$-colorings.
A graph $G$ is a \emph{minimal $H$-obstruction} if it is not $H$-colorable, but its every induced subgraph is.
Minimal $H$-obstructions in restricted graph classes were studied by some authors, but the results are rather scattered~\cite{DBLP:journals/dam/KaminskiP19,DBLP:conf/caldam/BeaudouFN20}.

\paragraph{Our motivation.}
Let us point a substantial difference between working with \coloring{H}and working with \lcoloring{H} (with \colorext{H} being somewhere between, but closer to \coloring{H}).
The \lcoloring{H} problem enjoys certain monotonicity: if $H'$ is an induced subgraph of $H$,
then every instance of \lcoloring{H'} can be seen as an instance of \lcoloring{H}, where no vertex from $V(H) \setminus V(H')$ appears in any list. Thus any tractability result for \lcoloring{H} implies the analogous result for \lcoloring{H'},
while any hardness result for \lcoloring{H'} applies also to \lcoloring{H}.
In particular, all hardness proofs for \lcoloring{H} follow the same pattern: first we identify a (possibly infinite) family $\cH$ of ``minimal hard cases''
and then show hardness of \lcoloring{H'} for every $H' \in \cH$. This implies that \lcoloring{H} is hard unless $H$ is $\cH$-free.
The lists are also useful in the design of algorithms: for example if for some reason we decide that some vertex $v \in V(G)$ must be mapped to $x \in V(H)$, we can remove from the lists of neighbors of $v$ all non-neighbors of $x$, and then delete $v$ from the instance graph.
This combines well with e.g. branching algorithms or divide-\&-conquer algorithms based on the existence of separators.

In contrast, when coping with \coloring{H} we need to think about the global structure of $H$.
This makes working with this variant of the problem much more complicated. In particular, hardness proofs often employ certain algebraic tools~\cite{DBLP:journals/jct/HellN90,DBLP:journals/tcs/Bulatov05}, which in turn do not combine well with the world of $F$-free graphs.
However, note that the complexity dichotomy for \coloring{H} \emph{is still monotone} with respect to taking induced subgraphs of $H$: 
the minimal \NP-hard cases are odd cycles.

An interesting example of non-monotonicity of \coloring{H} was provided by Feder and Hell in an unpublished manuscript~\cite{edgelists}.
For an odd integer $k \geq 5$, let $W_k$ denote the \emph{$k$-wheel}, i.e., the graph obtained from the $k$-cycle by adding a universal vertex.
Feder and Hell proved that \coloring{W_5} is polynomial-time solvable in line graphs.
This is quite surprising as $W_5$ contains a triangle and \coloring{K_3}, i.e., \coloring{3}, is \NP-hard in line graphs~\cite{Holyer1981TheNO}.
(Note that this implies that \lcoloring{W_5} is \NP-hard in line graphs.)
Feder and Hell also proved that for any odd $k \geq 7$, the \coloring{W_k} problem is \NP-hard in line graphs~\cite{edgelists}.

\paragraph{Our contribution.} In this paper we study to which extent the result of Feder and Hell~\cite{edgelists} can be generalized.
We provide an algorithm and a number of lower bounds, each of a different kind.

The main algorithmic contribution of our paper is the following theorem.
\begin{restatable}{theorem}{mainthm}
\label{thm:mainthm}
The \colorext{W_5} problem can be solved in polynomial time in $S_{2,1,1}$-free graphs.
\end{restatable}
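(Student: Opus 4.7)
The plan is to carry out the two-step strategy outlined in the introduction: first, a clean reduction of \colorext{W_5} to a combinatorial problem involving independent sets and triangles; and second, a polynomial-time algorithm for that problem in $S_{2,1,1}$-free graphs.

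For the reduction, write $W_5$ as a $5$-cycle $v_1 v_2 v_3 v_4 v_5$ together with a universal hub $h$. If $\varphi \colon V(G) \to V(W_5)$ is any homomorphism, then $I := \varphi^{-1}(h)$ is an independent set in $G$ (as $h$ has no self-loop), and $I$ must meet every triangle of $G$ (because the rim $W_5 - h = C_5$ is triangle-free, so the image of any triangle is forced to use $h$). Conversely, any independent set $I$ meeting all triangles, combined with any $C_5$-coloring of $G - I$, yields a homomorphism to $W_5$ by sending $I$ to $h$. Translating the precoloring in the obvious way, \colorext{W_5} becomes the following problem: find an independent set $I \subseteq V(G)$ such that (i) $I$ contains every vertex precolored $h$ and avoids every vertex precolored to the rim, (ii) $I$ intersects every triangle of $G$, and (iii) $G - I$ admits a $C_5$-coloring extending the precoloring restricted to the rim. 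This is the variant of the triangle-hitting independent set problem promised in the abstract.

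The second step is the technically harder part: solving this combined problem in polynomial time when $G$ is $S_{2,1,1}$-free. Here I would exploit the fact that whenever $I$ meets every triangle, the subgraph $G - I$ is triangle-free and still $S_{2,1,1}$-free, and that triangle-free $S_{2,1,1}$-free graphs have a restricted enough structure that their $C_5$-colorings should be describable by a bounded amount of local information (for instance, the image of a carefully chosen seed subgraph, or the ``rotation pattern'' around each short odd cycle). Given such a description, the joint task of choosing $I$ and a compatible $C_5$-coloring of $G - I$ can be encoded as a weighted, list-constrained independent-set problem on $G$, or on a polynomial-size auxiliary graph built from $G$ that is still $S_{2,1,1}$-free, and then solved using the Lozin--Milani\v{c} algorithm for \MIS in $S_{2,1,1}$-free graphs, or a direct augmenting-structure adaptation of it.

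The main obstacle I anticipate is precisely the interaction between the two constraints. Hitting all triangles by an independent set alone is in the spirit of generalizations of \MIS already handled in $S_{2,1,1}$-free graphs, and $C_5$-coloring alone should be tractable on triangle-free $S_{2,1,1}$-free graphs once their structure is well understood. The subtlety is that the choice of $I$ changes which $C_5$-coloring instance must be solved, and conversely the $C_5$-coloring requirement forbids certain choices of $I$; any reduction must therefore propagate the parity obstructions for $C_5$-coloring correctly across all admissible choices of $I$, all while respecting the precoloring. I expect the main technical content of the proof to consist of building an auxiliary graph together with lists, weights, or forbidden configurations whose resolution via a single run of an \MIS-type algorithm captures both requirements simultaneously.
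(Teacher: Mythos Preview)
Your two-step framing matches the paper's, but both steps have genuine gaps as written. For the reduction, the decisive point you are missing is that your condition (iii) can be \emph{eliminated}, not merely carried along. The paper proves that in any $\{S_{2,1,1},K_3\}$-free graph a partial $C_5$-coloring extends to a full one if and only if it is \emph{conflict-free}: there is no pair of precolored vertices joined by a path of length at most $3$ with uncolored interior on which the precoloring already fails to extend. Each such conflict forces either one specific vertex into $I$ (length-$2$ paths) or one specific edge to be covered by $I$ (length-$3$ paths). After absorbing these finitely many local obstructions into the instance, what remains is exactly \itte: find an independent $X$ with $X'\subseteq X$, $X\cap Y'=\emptyset$, $X$ covering a prescribed edge set $E'$, and $G-X$ triangle-free, with \emph{no} residual global coloring constraint. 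Your anticipated ``interaction between the two constraints'' is dissolved by this structural lemma, not managed by a clever joint encoding; without proving it, your reduction is not complete.

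For the second step, your plan to package everything as a weighted or list-constrained \MIS instance and invoke the Lozin--Milani\v{c} algorithm is unsubstantiated, and the paper takes a quite different route. It solves \itte directly: first reducing via modular decomposition and clique cutsets to prime atoms; then, in the claw-free case, observing that $K_4$-freeness forces $\Delta\le 5$, applying the Chudnovsky--Seymour strip structure to obtain strips of bounded treewidth, solving each strip by Courcelle's theorem, and gluing the strip solutions via a maximum-weight-matching computation on the base graph $D$; finally lifting from claw-free to $S_{2,1,1}$-free using the Lozin--Milani\v{c} \emph{structural} result that every prime induced subgraph of $G-N[v]$ is claw-free (not their \MIS algorithm as a black box). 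Nothing in your sketch explains how the triangle-hitting requirement could be encoded as an \MIS objective on an $S_{2,1,1}$-free auxiliary graph, so as it stands the second step is a hope rather than an argument.
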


Let us sketch the outline of the proof.
Perhaps surprisingly, despite the fact that graph homomorphisms generalize graph colorings,
our approach is much closer to the algorithms for \MIS.

Consider any homomorphism $\phi$ from $G$ to $W_5$ and let $X$ be the set of vertices of $G$ mapped by $\phi$ to the universal vertex of $W_5$.
We notice that $X$ is independent, and $G-X$ admits a homomorphism to $C_5$.
This is exactly how we look at the problem: we aim to find an independent set $X$ whose removal makes the graph $C_5$-colorable
(and make sure that the precoloring of vertices is respected).

So let us focus on describing the structure of $G-X$, i.e., recognizing $C_5$-colorable graphs.
Note that here we need to use the fact that our graph is $S_{2,1,1}$-free,
as \coloring{C_5} is \NP-hard in general graphs.
As a warm-up let us assume that our instance is claw-free and forget about precolored vertices.
We observe that every $C_5$-colorable graph must be triangle-free, as there is no homomorphism from $K_3$ to $C_5$.
But since $G-X$ is $\{S_{1,1,1},K_3\}$-free, it must be of maximum degree at most 2, i.e.,
every component of $G-X$ is a path or a cycle with at least 4 vertices.
It is straightforward to verify that such graphs always admit a homomorphism to $C_5$.
Therefore in claw-free graphs, solving \coloring{W_5} boils down to finding an independent set intersecting all triangles.

We extend this simple observation in two ways. First, we show that the same phenomenon occurs in $S_{2,1,1}$-free graphs:
the only $S_{2,1,1}$-free minimal $C_5$-obstruction is the triangle.
Second, we show that the same way we can handle precolored vertices:
the only no-instances of \colorext{C_5} in $\{S_{2,1,1},K_3\}$-free graphs can be easily recognized.
Based on this result we show that \colorext{W_5} in $S_{2,1,1}$-free graphs can be in polynomial time
reduced to the \textsc{Independent Triangle Transversal Extension} (\itte) problem on an induced subgraph of the original instance.
Here, the ``extension'' means that some vertices of our instance can be prescribed to be in $X$ or outside $X$.
Thus from now on we focus on solving \itte in $S_{2,1,1}$-free graphs.
Note that we still need to use the fact that our instances are $S_{2,1,1}$-free, as \itte is \NP-hard in general graphs~\cite{DBLP:journals/combinatorics/Farrugia04}.

We start with the case that our instance graph $G$ is claw-free. 
We use the result of Chudnovsky and Seymour~\cite{DBLP:journals/jct/ChudnovskyS08e} who show that each claw-free graph admits certain decomposition called a \emph{strip structure}.
Roughly speaking, this means that $G$ ``resembles'' the line graph of some graph $D$: the vertices of $G$ can be partitioned
into sets $\eta(e)$ assigned to the edges $e$ of $D$, such that (i) for each $e \in E(D)$ the set $\eta(e)$ induces a subgraph of $G$ with a simple structure
and (ii) the interactions between sets $\eta(e)$ and $\eta(f)$ for distinct edges $e,f$ are well-defined.
Due to property (i), for each edge $e$ of $D$ we can solve the problem in the subgraph of $G$ induced by $\eta(e)$ in polynomial time.
Then we can use property (ii) to combine these partial solutions into the final one by finding an appropriate matching in an auxiliary graph derived from $D$.

As the final step, we lift our algorithm for claw-free graphs to the class of $S_{2,1,1}$-free graphs.
We use the observation of Lozin and Milani\v{c}~\cite{DBLP:journals/jda/LozinM08}:
they show that if $G$ is $S_{2,1,1}$-free and \emph{prime}, then every \emph{prime} induced subgraph of the graph obtained from $G$ by removing any vertex and its neighbors is claw-free (the exact definition of prime graphs can be found in \cref{sec:tools}).
Using this observation we can reduce the \itte problem on $S_{2,1,1}$-free graphs to the same problem in the class of claw-free graphs,
which we already know how to solve in polynomial time.

Combining the reduction from \colorext{W_5} in $S_{2,1,1}$-free graphs to \itte in $S_{2,1,1,}$-free graphs
and the polynomial-time algorithm for \itte in $S_{2,1,1}$-free graphs we obtain \cref{thm:mainthm}.
Let us point out that the frontier of the complexity of \colorext{W_5} in $S_{a,b,c}$-free graphs is the same as for \MIS: the minimal open cases are $S_{3,1,1}$ and $S_{2,2,1}$.

\medskip
In the remainder of the paper we investigate several possible generalizations of \cref{thm:mainthm} and show a number of negative results.

First, one can ask whether a simpler algorithm, at least for \coloring{W_5}, can be obtained by showing that the family of minimal $S_{2,1,1}$-free $W_5$-obstructions is finite. We show that this is not the case: we construct an infinite family of minimal $W_5$-obstructions (actually, $W_k$-obstructions for every odd $k \geq 5$) which are even claw-free.

Next, we show that \colorext{W_5} in $S_{3,3,3}$-free graphs of maximum degree 5 is \NP-hard and cannot be solved in subexponential time, unless the ETH fails.
This should be contrasted with the fact that for any $a,b,c$, the \MIS problem in $S_{a,b,c}$-free graphs can be solved in subexponential time~\cite{DBLP:journals/corr/abs-1907-04585,DBLP:journals/corr/abs-2203-04836},
and even in polynomial time, if the instance is of bounded maximum degree~\cite{ACDR21}.
These facts are an evidence that \colorext{W_5} in $S_{a,b,c}$-free graphs for large $a,b,c$ cannot be solved with the tools developed for \MIS, as it is the case for $a=2$ and $b=c=1$.
Again, we find our hardness result quite surprising, as typically problems that are hard in $S_{a,b,c}$-free graphs for some fixed $a,b,c$ are already had in claw-free graphs.

Finally, we consider the complexity of variants of \coloring{W_k} in $F$-free graphs.
We remark that for every $k$, the \lcoloring{W_k} problem can be solved in polynomial time in $P_5$-free graphs~\cite{DBLP:conf/esa/ChudnovskyKPRS20}
Furthermore for every $k$ the graph $W_k$ is not predacious, so by the result of Okrasa and Rzążewski~\cite{DBLP:conf/stacs/OkrasaR21}, 
\lcoloring{W_k} can be solved in quasipolynomial time in $P_t$-free graphs, for every fixed $t$.
We show that forbidding any other connected graph leads to an \NP-hard problem,
with possible exception of $\coloring{W_5}$ in $S_{a,b,c}$-free graphs.

\begin{restatable}{theorem}{thmhardness}
\label{thm:hardness}
Let $F$ be a connected graph.
\begin{enumerate}
\item If $F$ is not a path nor a subdivided claw, then the \coloring{W_5} problem is \NP-hard in $F$-free graphs and cannot be solved in subexponential time, unless the ETH fails.
\item Let $k \geq 7$ be odd. If $F$ is not a path, then the \coloring{W_k} problem is \NP-hard in $F$-free graphs and cannot be solved in subexponential time, unless the ETH fails.
\end{enumerate}
\end{restatable}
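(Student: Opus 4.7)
The proof is a case analysis on the structure of $F$.

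\emph{Case 1 (only relevant for Part 2): $F$ is a subdivided claw.} That is, $F = S_{a,b,c}$ for some $a,b,c \geq 1$. Since the claw $S_{1,1,1}$ appears as an induced subgraph of every $S_{a,b,c}$, every claw-free graph is also $F$-free; hence the class of claw-free graphs is contained in the class of $F$-free graphs. Feder and Hell~\cite{edgelists} proved that \coloring{W_k} is $\NP$-hard in line graphs (and therefore in claw-free graphs) for every odd $k \geq 7$, and their reduction is of linear size, yielding the ETH lower bound. This hardness transfers directly to $F$-free graphs.

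\emph{Case 2: $F$ is connected, not a path, and not a subdivided claw.} This covers Part 1 in full and the complementary subcase of Part 2. Under this hypothesis, $F$ must contain at least one of the following: (a) a cycle, (b) a vertex of degree at least $4$, (c) two vertices of degree at least $3$. The plan is an Alekseev-style reduction from \coloring{W_k} on graphs of bounded maximum degree (which is $\NP$-hard by the Hell--Ne\v{s}et\v{r}il dichotomy~\cite{DBLP:journals/jct/HellN90} together with standard degree-reduction gadgets). Given a source instance $G_0$, we construct an output graph $G$ by replacing each edge of $G_0$ with a fixed gadget $\Gamma$ of ``length'' $\ell$, a constant depending on $|V(F)|$ and $\mathrm{diam}(F)$.

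The gadget $\Gamma$ must satisfy two properties: (P1) any $W_k$-coloring of $\Gamma$ maps its two terminal vertices to an adjacent pair of vertices of $W_k$, and conversely every edge of $W_k$ is realized in some $W_k$-coloring of $\Gamma$; and (P2) the resulting graph $G$ is $F$-free. Property (P2) is ensured by choosing $\Gamma$'s parameters based on $F$: if $F$ contains a cycle of length at most $L$, we pick $\ell$ so that the girth of $G$ exceeds $L$; if $F$ has a vertex of degree at least $4$, the gadget has maximum degree at most $3$, so that $G$ has as well; and if $F$ is a tree with at least two vertices of degree $\geq 3$, we set $\ell > \mathrm{diam}(F)$, forcing any two degree-$\geq 3$ vertices of $G$ (necessarily original vertices of $G_0$) to be pairwise at distance greater than $\mathrm{diam}(F)$, precluding any induced copy of $F$.

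The main obstacle lies in producing a gadget satisfying (P1). A plain path subdivision does \emph{not} work, since $W_k$ contains a universal vertex and hence admits walks of any length at least $2$ between any pair of vertices; thus a long subdivided edge completely decouples the images of its terminals. Instead, the gadget must incorporate additional rigidifying structure---for example, small triangles attached along the path, or cliques pinning selected intermediate vertices to the universal vertex of $W_k$---so that adjacency constraints between the terminals are propagated through $\Gamma$. Verifying that such a gadget correctly encodes an edge for \coloring{W_k} while remaining of constant size and preserving $F$-freeness is the crux of the argument. Once the gadget is established, the reduction is of linear size in $|V(G_0)|$, yielding $\NP$-hardness together with the ETH-tight lower bound in $F$-free graphs for both parts of the theorem.
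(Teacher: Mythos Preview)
Your Case~1 (subdivided claws for $k\geq 7$) matches the paper exactly. But Case~2 contains a genuine gap: you never construct the edge gadget~$\Gamma$, and the constraints you impose on it are in fact mutually inconsistent.

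Concretely, consider the subcase where $F$ has a vertex of degree at least~$4$. You require the output graph $G$ to have maximum degree at most~$3$. But then, by Brooks' theorem, every connected component of $G$ other than $K_4$ is $3$-colorable and hence $W_k$-colorable; since $G$ must be $K_4$-free for the reduction to ever produce a no-instance, \emph{every} output is a yes-instance and the reduction is vacuous. (If instead the original vertices of $G_0$ retain degree $\geq 4$ in $G$, then their gadget-neighbors form an independent set and you get an induced $K_{1,4}$, so $G$ is not $F$-free.) Similarly, in the cycle subcase you ask $\Gamma$ to have large girth while simultaneously suggesting that triangles be attached along it to rigidify the coloring; these two requirements cannot coexist. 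The underlying difficulty you correctly identify---that the universal vertex of $W_k$ kills any path-based edge gadget---is precisely why an ``Alekseev-style'' edge replacement does not go through here.

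The paper sidesteps this by abandoning edge gadgets entirely and using two separate reductions. For $F$ containing a cycle, it reduces from $C_k$\textsc{-Coloring} (not $W_k$\textsc{-Coloring}) using \emph{vertex} gadgets $Y(v)$: each $Y(v)$ is built from copies of a uniquely $W_k$-colorable graph of arbitrarily large girth, whose existence is guaranteed by a theorem of Zhu, and it forces its three interface vertices to receive the \emph{same} color from $[k]$ (never~$0$). For $F$ a tree with a degree-$4$ vertex or two degree-$3$ vertices, the paper reduces from \textsc{Pos-1-in-3-Sat} using chains of diamonds; the resulting graphs have maximum degree~$4$ but are $K_{1,4}$-free because every degree-$4$ vertex lies in a triangle, and any two vertices with three independent neighbors are either twins or far apart.
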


The paper is concluded with pointing our some open problems and possible directions for future research.


\begin{remark*}
The curious reader might wonder why we only consider $k$-wheels for odd $k\geq 5$.
The 3-wheel is exactly $K_4$, and homomorphisms to $K_4$ are exactly proper 4-colorings.
As mentioned above, both \coloring{4} and \colorext{4} are well studied in hereditary graph classes and behave substantially differently than \coloring{W_k} for odd $k \geq 5$.
On the other hand, if $k$ is even, then the \coloring{W_k} problem is equivalent to the \coloring{3} problem:
a graph admits a homomorphism to $W_k$ if and only if it is 3-colorable (this follows from the fact that $K_3$ is the \emph{core} of $W_k$~\cite[Section 1.4]{hell2004graphs}).
Thus it only makes sense to consider variants of \coloring{W_k} in $F$-free graphs, where $F$ is a path or a forest of paths. However, as $W_k$ is non-predacious, it follows from the result of Okrasa and Rzążewski~\cite{DBLP:conf/stacs/OkrasaR21} that then even \lcoloring{W_k} is quasipolynomial-time solvable in $F$-free graphs.
\end{remark*}

\section{Preliminaries}\label{sec:prelim}
For an integer $k$, by $[k]$ we denote the set $\{1,2,\ldots,k\}$. 

Let $G$ be a graph, $v$ be a vertex and $X$ be a set of vertices.
By $N_G(v)$ we denote the set of neighbors of $v$, and by $N_G[v]$ we denote the set $N_G(v) \cup \{v\}$.
If $G$ is clear from the context, we omit the subscript, and write, respectively, $N(v)$, and $N[v]$. 
By $G[X]$ we denote the subgraph induced by $X$, and by $G - X$ we denote $G[V(G) \setminus X]$.
By $\overline{G}$ we denote the complement of $G$.

We write $\phi : G \to H$ to indicate that $\phi$ is a homomorphism from $G$ to $H$.
We also write $G \to H$ to indicate that some homomorphism from $G$ to $H$ exists.

For a fixed graph $H$, an instance of \colorext{H} is a triple $(G,U,\vphi)$,
where $G$ is a  graph, $U$ is a subset of vertices of $G$, and $\vphi$ is a function that maps vertices from $U$ to elements of $V(H)$. We ask whether there exists a homomorphism $\psi : G \to H$ such that $\psi|_{U} = \phi$.


For a $k$-wheel $W_k$, we will always denote the consecutive vertices of the induced $k$-cycle in $W_k$ by $1,2,3,\ldots,k$, and the universal vertex by $0$.
The following observation is straightforward and will be used implicitly throughout the paper.

\begin{observation}
Let $k \geq 5$ be an odd integer.
Let $\phi$ be a homomorphism from a graph $G$ to $W_k$.  Let $X$ be the set of vertices of $G$ mapped by $\phi$ to 0. Then the following properties are met:
\begin{enumerate}
\item $G$ is 4-colorable,
\item $G$ is $K_4$-free,
\item $X$ is an independent set,
\item $G-X$ has a homomorphism to $C_k$, in particular it is 3-colorable and triangle-free.
\end{enumerate}
\end{observation}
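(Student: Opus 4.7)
The plan is to verify each item by exhibiting the corresponding structural property inside $W_k$ and pulling it back through $\phi$. Since homomorphisms compose with proper colorings of the target and restrict to induced subgraphs of the target, the four properties of $G$ will follow from analogous easily-checked properties of $W_k$. There is no genuine technical obstacle; the role of this observation is to fix the vocabulary that drives the rest of the paper, namely the split of a $W_k$-coloring into the preimage of the universal vertex (which must be independent) and its complement (which must be $C_k$-colorable).

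For (1), I would produce a proper $4$-coloring of $W_k$ by assigning one color to the universal vertex $0$ and $3$-coloring the odd cycle $C_k$ with three further colors, which is possible because $C_k$ is $3$-chromatic. Composing this coloring with $\phi$ yields a proper $4$-coloring of $G$. For (2), I would observe that every triangle of $W_k$ must include the vertex $0$, since no three vertices of the induced $C_k$ are pairwise adjacent when $k \geq 5$; in particular $W_k$ contains no $K_4$ as a subgraph. Any copy of $K_4$ in $G$ would have to map injectively (the target is loopless, so two adjacent vertices cannot share an image) onto a $K_4$ in $W_k$, a contradiction. For (3), the same ``loopless target'' argument applied to a single edge shows that the vertex $0$ cannot have two adjacent preimages; hence $X = \phi^{-1}(0)$ is independent.

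For (4), I would restrict $\phi$ to $V(G) - X$. By the definition of $X$ the image of this restriction lies entirely in $\{1, \ldots, k\}$, and the subgraph of $W_k$ induced by $\{1, \ldots, k\}$ is exactly $C_k$; hence the restriction is a homomorphism $G - X \to C_k$. Since $k$ is odd, $C_k$ admits a proper $3$-coloring, which composes with this restriction to give a proper $3$-coloring of $G - X$. Since $k \geq 5$, $C_k$ is triangle-free, so any triangle in $G - X$ would be forced to map injectively onto a triangle of $C_k$, which does not exist.
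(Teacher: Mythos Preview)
Your proof is correct; each of the four items is verified cleanly by pulling back the corresponding property of $W_k$ through $\phi$. The paper does not actually supply a proof of this observation (it is declared ``straightforward'' and used implicitly), so there is nothing to compare against beyond noting that your argument is exactly the intended one-line verification spelled out in full.
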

For a graph $G$, a function $\wei : E(G) \to \N \cup \{0\}$, and a set $E' \subseteq E(G)$,
we define $\wei(E') := \sum_{e \in E'} \wei(e)$.

Consider a certain variant of the \textsc{Maximum Weight Matching} problem.
An instance of \textsc{Maximum Weight Matching$^*$} (\MWM) is a tuple $(G,U,\wei,k)$, where $G$ is a graph,
$U$ is a subset of its vertices, $\wei : E(G) \to \N \cup \{0\}$ is an edge weight function, and $k$ is an integer.
We ask whether $G$ has a matching $M$, such that $\wei(M)\geq k$ and $M$ covers all vertices from $U$.
By a simple reduction to the \textsc{Maximum Weight Matching} problem we show that \MWM can be solved in polynomial time.

\begin{restatable}{lemma}{perfectmatching}
\label{lem:perfect-matching}
The \MWM problem can be solved in polynomial time.
\end{restatable}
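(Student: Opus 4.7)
The plan is to reduce \MWM to the ordinary \textsc{Maximum Weight Matching} problem, which is polynomial-time solvable by Edmonds' algorithm. The idea is to bake the covering constraint into the weight function by adding a sufficiently large bonus to every edge incident to a vertex of $U$, so that any maximum weight matching is forced to cover $U$ whenever possible, and is otherwise $\wei$-optimal among $U$-covering matchings.

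More concretely, set $W := 1 + |V(G)| \cdot \max_{e \in E(G)} \wei(e)$ and define a new weight function $\wei' : E(G) \to \N$ by
\[
\wei'(uv) := \wei(uv) + W \cdot |\{u,v\} \cap U|.
\]
Let $M^*$ be a maximum $\wei'$-weight matching in $G$, computed in polynomial time. The key observation is that for any matching $M$ one has $\wei'(M) = \wei(M) + W \cdot |V(M) \cap U|$, and since $\wei(M) \leq |V(G)| \cdot \max_e \wei(e) < W$ by the choice of $W$, the term $W \cdot |V(M) \cap U|$ dominates. Hence $M^*$ maximizes $|V(M^*) \cap U|$ over all matchings, and among matchings attaining this maximum it maximizes $\wei$.

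From this it follows that: if no matching in $G$ covers $U$, then neither does $M^*$, and we return \textsc{No}; otherwise $M^*$ covers $U$, and $M^*$ is a maximum $\wei$-weight matching among matchings that cover $U$. We then return \textsc{Yes} if and only if $\wei(M^*) \geq k$. Checking which case we are in is straightforward: $M^*$ covers $U$ if and only if $\wei'(M^*) \geq W \cdot |U|$.

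There is essentially no obstacle here beyond bookkeeping: the weights $\wei'$ are polynomially bounded in the input, so Edmonds' weighted matching algorithm runs in polynomial time, and the correctness argument is the simple dominance computation above. The only point that warrants care is the choice of $W$, which must strictly exceed the maximum possible value of $\wei(M)$ so that gaining even a single extra covered vertex of $U$ always outweighs any loss in $\wei$; the bound $W > |V(G)| \cdot \max_e \wei(e)$ clearly suffices.
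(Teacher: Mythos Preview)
Your argument is correct. It is also a genuinely different and more direct route than the paper's. The paper proceeds in two steps: first it takes two disjoint copies $G_0,G_1$ of $G$, adds zero-weight ``escape'' edges $u_0u_1$ for every $u\notin U$, and argues that a perfect matching of weight at least $2k$ in this doubled graph exists iff $(G,U,\wei,k)$ is a yes-instance; then it applies a second weight shift (adding a large constant $m$ to every edge) to reduce the perfect-matching-with-threshold question to ordinary maximum-weight matching. Your single additive bonus $W\cdot|\{u,v\}\cap U|$ collapses both steps into one and avoids doubling the graph, at the price of a slightly more careful dominance computation (which you carry out correctly: $\wei(M)<W$ ensures $|V(M)\cap U|$ is maximized first). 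The only cosmetic nit is that $\max_{e\in E(G)}\wei(e)$ is undefined when $E(G)=\emptyset$; in that degenerate case the instance is trivially decided, so it does not affect correctness.
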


The proof of \cref{lem:perfect-matching} is rather standard and thus we defer it to the appendix.

\newpage
\section{Reduction from \colorext{W_5} to \itte in $S_{2,1,1}$-free graphs}\label{sec:reduction}
In this section we show that in $S_{2,1,1}$-free graphs the \colorext{W_5} problem can be reduced to a variant of the problem of finding an independent set intersecting all triangles.
We start with the analysis of $S_{2,1,1}$-free instances of \colorext{C_5}: recall that these are the graphs obtained from a yes-instance of \colorext{W_5} by removing an independent set mapped to the central vertex of $W_5$.

\subsection{\colorext{C_5} in $\{S_{2,1,1},K_3\}$-free graphs}
A connected bipartite graph is called {\it almost complete} if it is complete or can be obtained from a complete bipartite graph by removing the edges of some matching. 

\begin{observation}\label{obs:s211-triangle-free-structure}
Let $G$ be a connected $\{S_{2,1,1},K_3\}$-free graph.
Then either (i) $G$ is a path, or (ii) $G$ a cycle of length at least 5, or (iii) $G$ is an almost complete bipartite graph.
\end{observation}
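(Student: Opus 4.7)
I would split on $\Delta(G)$. If $\Delta(G) \leq 2$, then $G$ is a path or a cycle; triangle-freeness rules out $C_3$, the case $C_4 = K_{2,2}$ falls under case (iii), paths give (i), and longer cycles give (ii). So assume $\Delta(G) \geq 3$, and fix a vertex $v$ of maximum degree $d$. The plan is to show $G$ is almost complete bipartite by analyzing the BFS layers $L_0 = \{v\}$, $L_1 = N(v)$, $L_2$, $L_3$, $\ldots$ from $v$.

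The core of the argument is a series of induced $S_{2,1,1}$ (and occasional triangle) constructions. First I would establish that $L_1$ is independent (triangle-freeness); that every $w \in L_2$ has at least $d-1$ neighbors in $L_1$, since otherwise two non-neighbors $u_j, u_k \in L_1$ of $w$ together with $v$, a neighbor $u_i \in L_1 \cap N(w)$, and $w$ itself induce a fork centered at $v$; that $L_2$ is independent, because two adjacent $L_2$-vertices would share an $L_1$-neighbor by pigeonhole (using $d \geq 3$) and form a triangle; and that $L_3$ is independent, because adjacent $z, z' \in L_3$ combined with an $L_2$-neighbor $w$ of $z$ and two $L_1$-neighbors of $w$ induce $S_{2,1,1}$ centered at $w$.

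Next I would invoke the maximality of $\deg(v)$. This gives $|L_3| \leq 1$: distinct $z_1, z_2 \in L_3$ cannot share an $L_2$-neighbor $w$ (else $\deg(w) \geq (d-1) + 2 > d$), and if their respective $L_2$-neighbors $w_1, w_2$ are distinct, then a common $L_1$-neighbor $u$ of $w_1, w_2$ gives an $S_{2,1,1}$ on $\{u, v, w_1, w_2, z_1\}$ centered at $u$. It also gives $L_4 = \emptyset$: a vertex $x$ at distance $4$ via a shortest path $v, u, w, z, x$ induces $S_{2,1,1}$ centered at $w$ with long branch $w, z, x$ and short branches to two $L_1$-neighbors of $w$. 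When $L_3 = \{z\}$, every $w \in L_2$ must be adjacent to $z$: otherwise, for any $w' \in L_2 \cap N(z)$ and a common $L_1$-neighbor $u$ of $w, w'$, the set $\{u, v, w, w', z\}$ is an induced fork. Finally, each $u_0 \in L_1$ is non-adjacent to at most one vertex of $L_2$: otherwise for two such $w_1, w_2$ and any other $u_1 \in L_1$, the set $\{u_1, v, w_1, w_2, u_0\}$ is an induced fork centered at $u_1$ with long branch $u_1, v, u_0$.

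Putting everything together, $V(G) = L_0 \cup L_1 \cup L_2 \cup L_3$ admits the bipartition $A = L_1 \cup L_3$, $B = L_0 \cup L_2$, and the non-edges between $A$ and $B$ consist of the single non-edge $vz$ (if $|L_3|=1$) together with a partial matching between $L_1$ and $L_2$; thus $G$ is almost complete bipartite. I expect the main obstacle to be identifying the correct five-vertex set in each fork argument: many natural candidates induce a $C_4$ with a pendant (degree sequence $(3,2,2,2,1)$) rather than $S_{2,1,1}$ (degree sequence $(3,2,1,1,1)$). The trick in most of these arguments is to take a vertex of $L_1$, typically a common neighbor of two $L_2$-vertices, as the center of the fork, using $v$ as one of the degree-$1$ leaves.
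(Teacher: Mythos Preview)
Your proof is correct and takes a genuinely different route from the paper's. The paper splits into three cases based on cycle structure: (1) $G$ acyclic, where it shows a non-path tree must be a star; (2) $G$ non-bipartite or $C_4$-free, where it takes a shortest suitable induced cycle and shows it equals the whole graph; (3) $G$ bipartite and containing a $C_4$, where it anchors a detailed structural analysis at that $C_4$. You instead split on $\Delta(G)$: the case $\Delta\le 2$ dispatches paths and cycles at once, and for $\Delta\ge 3$ a single BFS from a maximum-degree vertex directly builds the almost-complete-bipartite structure. Your argument is more uniform---once a high-degree vertex exists there is no further case split---and the maximality of $\deg(v)$ does real work, forcing $|L_3|\le 1$ and $L_4=\emptyset$. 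The paper's Case~2, on the other hand, yields the pleasant self-contained fact that a shortest induced odd cycle must be the entire graph, which your approach does not isolate. Both proofs ultimately rest on the same mechanism: repeatedly exhibiting an induced $S_{2,1,1}$ (with the center chosen carefully, as you note) whenever the structure strays from the target.

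One small remark: in your $L_3$-independence step, if $z'$ happens to be adjacent to the chosen $w\in L_2\cap N(z)$, the five-vertex set does not induce $S_{2,1,1}$; but then $w,z,z'$ is a triangle, so this subcase is harmless. It is worth stating this one-line branch explicitly.
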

\begin{proof}
We consider the following cases.
\paragraph{Case 1: $G$ does not have cycles. } Hence, if $G$ is not a path, it has a vertex $v$ of degree at least 3.
However, if now there exists a vertex that is non-adjacent to $v$, then, as $G$ is connected, there exists a vertex $u$ at distance 2 from $v$. 
If $w$ is the common neighbor of $u$ and $v$, and $v_1, v_2$ are some other two neighbors of $v$ (that are non-adjacent to $u$ as $G$ does not have cycles), then $\{v,v_1,v_2,u,w\}$ induces a copy of $S_{2,1,1}$. 
Hence, $G$ is a tree that has a universal vertex $v$, so $G$ must be a star, i.e., an almost complete bipartite graph.

\paragraph{Case 2: $G$ is non-bipartite or $C_4$-free.} If $G$ is bipartite, let $k\geq 6$ be the length of a shortest induced cycle in $G$.
Such a cycle must exist, since $G$ is $C_4$-free and contains a cycle of length at least 6, so also an induced cycle of length at least 6. 
If $G$ is non-bipartite, let $k$ be the length of a shortest induced odd cycle in $G$.
Let $C$ be an induced cycle of length $k$ in $G$, clearly $k \geq 5$.
Denote by $v_1,v_2,\ldots,v_k$ the consecutive vertices of $C$.

We claim that $V(C)=V(G)$.
Suppose there exists a vertex in $G$ that does not belong to $C$. Since $G$ is connected, some vertex, say $v_1$, of $C$ has a neighbor, say $u$, which is not in $C$. 
Since $G$ is $K_3$-free and $C$ is an induced cycle of length $k \geq 5$, the graph induced by the set of vertices $\{u, v_1, v_2, v_3, v_k\}$ is a copy of $S_{2,1,1}$ unless $uv_3$ is an edge in $G$. 
So since now assume that $uv_3$ is an edge in $G$. Then in particular $\{u,v_1,v_2,v_3\}$ induces a cycle $C_4$. Hence if $G$ is bipartite, we get a contradiction and in this case we conclude that $G$ is a cycle of length at least $6$.
If $G$ is not bipartite, we similarly show that $uv_{k-1}$ is an edge in G. Thus, $u, v_3, v_4, \ldots, v_{k-1}$ are consecutive vertices of a cycle of length $k-2$ in $G$. 
We found an odd cycle in $G$ of length smaller than $k$. 
Consequently, there is an induced odd cycle in $G$ which is shorter than the cycle $C$, a contradiction. 
Thus, $G$ is an odd cycle of length at least $5$, and that completes the proof.

\paragraph{Case 3: $G$ is bipartite and contains a cycle $C_4$.} Let us denote the bipartition classes of $G$ by $V$ and $U$ and let $C$ be a copy of $C_4$ contained in $G$. Denote by  $v_1,v_2$ (respectively, $u_1,u_2$) the vertices of $C$ in $V$ (resp. in $U$). 

Suppose there is a vertex in $G$ which is not a neighbor of any vertex in $C$. By connectivity of $G$ there is a vertex, say $x$, at distance $2$ from $C$. We can assume without loss of generality that $v_1$ is a vertex in $C$ at distance $2$ from $x$. Let $y$ be a common neighbor of $v_1$ and $x$. It is easy to see that the graph induced by the set of vertices $\{x,y,v_1,u_1,u_2\}$ is a copy of $S_{2,1,1}$, a contradiction. Thus, every vertex in $G$ is a neighbor of some vertex of $C$. Consequently, every vertex of $U$ (resp. $V$) is adjacent to $v_1$ or $v_2$ (resp. $u_1$ or $u_2$).

Assume now that $v_1$ has at least two neighbors, say $w_1,w_2$ which are not neighbors of $v_2$. Then the set of vertices $\{v_2,u_1,v_1,w_1,w_2\}$ induces a copy of $S_{2,1,1}$, a contradiction. Thus, there is at most one vertex which is adjacent to $v_1$ but not to $v_2$. We denote this vertex by $u_1'$ if it exists. Similarly, there is at most one vertex adjacent to $v_2$ (resp. $u_1,u_2$) but not to $v_1$ (resp. $u_2,u_1$). We denote this vertex by $u_2'$ (resp. $v_1',v_2'$) if it exists. 
We have shown that each vertex $v_1,v_2$ (resp. $u_1,u_2$) is adjacent to all but at most one vertex of $U$ (resp. $V$). 

Suppose now that $u_1'$ is non-adjacent not only to $v_2$ but to some other vertex $x\in V$ as well. Clearly, at least one of the vertices $u_1,u_2$ is adjacent to $x$ because otherwise $x$ is not a neighbor of any vertex of $C$. Assume without loss of generality that $u_1$ is adjacent to $x$. Since $u_1'$ is adjacent to $v_1$, the set of vertices $\{u_1',v_1,u_1,v_2,x\}$ induces a copy of $S_{2,1,1}$, a contradiction. Thus, $u_1'$ is adjacent to all vertices of $V-\{v_2\}$. We similarly show that $u_2'$ (resp. $v_1',v_2'$) is adjacent to all vertices of $V-\{ v_1\}$ (resp. $U-\{ u_2\}, U-\{ u_1\}$). 

Finally, suppose that some vertex $v\in V-\{ v_1,v_2,v_1',v_2'\}$ is non-adjacent to some two vertices, say $x_1,x_2$ of $U$. Then, the set of vertices $\{ v,u_1,v_1,x_1,x_2\}$ induces a copy of $S_{2,1,1}$, a contradiction. Thus, every vertex of $V-\{ v_1,v_2,v_1',v_2'\}$ is adjacent to all vertices of $U$ except possibly one. We similarly show that every vertex of $U-\{ u_1,u_2,u_1',u_2'\}$ is adjacent to all vertices of $V$ except possibly one. Thus, $G$ is an almost complete bipartite graph. 
\end{proof}

We denote the consecutive vertices of $C_5$ by $1,2,3,4,5$. We will refer to the vertices of $C_5$ as colors. We will say that two colors are {\it neighbors} if they are neighbors on the cycle $C_5$. 

Let $G$ be a graph, let $W \subseteq V(G)$, and let $\varphi:W \to V(C_5)$ be a coloring of vertices of $W$. 
We say that a pair of vertices $\{u,v\}\subseteq V(G)$ is \emph{conflicted} if $u,v \in W$ and there is an $u$-$v$ path $P$ of length at most $3$ such that $W\cap V(P)=\{ u,v\}$ and $\varphi|_{\{ u,v\}}$ cannot be extended to a homomorphism from $P$ to $C_5$.
Equivalently, a pair $\{u,v\}\subseteq W$ of vertices of $G$ is conflicted in a coloring $\varphi$ of $G$ if and only if 
\begin{enumerate}[(i)]
\item $v \in N_G(u)$, and $\varphi(v)$ is non-adjacent to $\varphi(u)$ in $C_5$, or
\item there exists a path $u,w,v$ in $G$ with $w\not\in W$, and $\varphi(v)$ is adjacent to $\varphi(u)$, or
\item there exists a path $u,w_1,w_2,v$ in $G$ with $w_1,w_2\not\in W$, and $\varphi(u)=\varphi(v)$.
\end{enumerate}
We say that $\varphi$ is \emph{conflict-free}, if there is no pair of conflicted vertices in $G$ (with respect to $\varphi$).

Clearly, being triangle-free and conflict-free are necessary conditions to be a yes-instance of \colorext{C_5}.
The following lemma say that in $S_{2,1,1}$-free graphs they are also sufficient.

\begin{lemma}\label{obs:pre-ext-triangle-free}
Let $G$ be an $\{S_{2,1,1},K_3\}$-free graph, let $W \subseteq V(G)$, and let $\varphi:W \to V(C_5)$. 
If $\varphi$ is conflict-free, then $\varphi$ can be extended to a homomorphism $\psi:G \to C_5$.
\end{lemma}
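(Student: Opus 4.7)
The plan is to handle each connected component $H$ of $G$ separately, since by definition every conflicted pair lies in a single component, and a homomorphism is specified component by component. By \cref{obs:s211-triangle-free-structure}, the connected $\{S_{2,1,1},K_3\}$-free graph $H$ falls into one of three cases: (i) $H$ is a path, (ii) $H$ is a cycle of length at least $5$, or (iii) $H$ is an almost complete bipartite graph.

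The key tool for cases (i) and (ii) is that for any $a,b \in V(C_5)$ and integer $\ell \geq 1$, a walk of length $\ell$ in $C_5$ from $a$ to $b$ exists if and only if either $\ell = 1$ and $ab \in E(C_5)$, or $\ell = 2$ and $ab \notin E(C_5)$, or $\ell = 3$ and $a \neq b$, or $\ell \geq 4$. These four conditions line up precisely with clauses (i)--(iii) of the conflict definition. In case (i), between two consecutive vertices of $W$ on the path $H$ the induced subpath has some length $\ell$ with interior disjoint from $W$: if $\ell \leq 3$ then conflict-freeness supplies the required walk in $C_5$, and if $\ell \geq 4$ such a walk always exists. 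Concatenating these walks yields the desired extension $\psi$; the subpaths before the first and after the last $W$-vertex on $H$ are handled trivially. Case (ii) is analogous: either $W \cap V(H) = \emptyset$ and a homomorphism $C_n \to C_5$ exists since $n \geq 5$, or $H$ decomposes into arcs joining consecutive $W$-vertices, each handled as in case (i).

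The substantive work is case (iii); write $(A,B)$ for the bipartition of $H$. I first show that each of $\varphi(A \cap W)$ and $\varphi(B \cap W)$ is an independent set in $C_5$, hence of size at most $2$. For distinct $a_1,a_2 \in A \cap W$, in the generic subcase there is a common neighbor $b \in B \setminus W$, and then the length-$2$ path $a_1 b a_2$ together with clause (ii) of the conflict definition force $\varphi(a_1)$ and $\varphi(a_2)$ to be non-adjacent in $C_5$; the residual small subcases (when $|B|$ is small or nearly all of $B$ lies in $W$) are treated by direct edge constraints and by length-$3$ paths. Given this, I construct $\psi$ by cases: if $|\varphi(A \cap W)|, |\varphi(B \cap W)| \leq 1$, map each part constantly to a color compatible with $\varphi$, choosing the two colors to be adjacent in $C_5$ as ensured by clause (i) of the conflict definition. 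If $|\varphi(A \cap W)| = 2$, say $\varphi(A \cap W)=\{c_1,c_2\}$ with $d_{C_5}(c_1,c_2) = 2$ and unique common neighbor $d^{\ast}$ in $C_5$, I argue (again using direct edge constraints and length-$3$ paths through unprecolored vertices) that every vertex of $B$ is forced to map to $d^{\ast}$, while vertices of $A \setminus W$ can be sent to either $c_1$ or $c_2$ arbitrarily. The symmetric situation $|\varphi(B \cap W)| = 2$ is handled analogously.

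The principal obstacle is case (iii): the dense edge structure of an almost complete bipartite graph interacts with the very sparse adjacency of $C_5$, and the absent matching edges of $H$ introduce boundary situations where the generic common non-$W$ neighbor may not exist. The main work will be to verify, by careful case analysis, that conflict-freeness together with the structural restrictions supplied by clauses (i)--(iii) still suffices to constrain $\varphi$ enough that the explicit construction above always succeeds.
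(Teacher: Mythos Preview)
Your treatment of paths and cycles is correct. The gap is in case~(iii): your central claim that $\varphi(A\cap W)$ is an independent set in $C_5$ is \emph{false} when two precoloured vertices of $A$ have no common neighbour in $H$. Concretely, let $B=\{v_1,v_2\}$, $A=\{u_1,u_2,u_3,u_4\}$, and let $H$ be $K_{2,4}$ with the matching $\{u_1v_2,\,u_2v_1\}$ deleted; this is almost complete bipartite and, having vertices of degree~$3$, is neither a path nor a cycle. With $W=\{u_1,u_2\}$, $\varphi(u_1)=1$, $\varphi(u_2)=2$, the precolouring is conflict-free: $u_1$ and $u_2$ are non-adjacent, have no common neighbour (so clause~(ii) is vacuous), and lie in the same bipartition class (so no path of length $1$ or $3$ joins them). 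Yet $\varphi(A\cap W)=\{1,2\}$ is an edge of $C_5$, so your construction, which assumes $d_{C_5}(c_1,c_2)=2$, does not apply. The ``length-$3$ paths'' you invoke cannot rescue the independence claim here, since any path between two vertices on the same side of a bipartite graph has even length. The paper isolates exactly this configuration as a separate subcase and extends $\varphi$ using \emph{three} colours on $A$ (here $u_1\mapsto 1$, $u_2\mapsto 2$, $u_3,u_4\mapsto 4$, $v_1\mapsto 5$, $v_2\mapsto 3$), something your scheme never attempts.

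Even where independence does hold, your forcing argument ``every vertex of $B$ must map to $d^{\ast}$'' breaks when both $|\varphi(A\cap W)|=2$ and $|\varphi(B\cap W)|=2$. Take $H=K_{3,3}$ minus the matching $\{a_1b_1,\,a_2b_2\}$ (again not a path or cycle), $W=\{a_1,a_2,b_1,b_2\}$, and $\varphi(a_1)=1$, $\varphi(a_2)=3$, $\varphi(b_1)=4$, $\varphi(b_2)=2$. One checks this is conflict-free, but $b_1$ is adjacent to no $W$-vertex of colour $1$, so nothing forces $\psi(b_1)=d^{\ast}=2$; indeed $\varphi(b_1)=4$. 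The required extension ($a_3\mapsto 3$, $b_3\mapsto 2$) uses two colours on each side. The paper avoids both pitfalls by working instead with a \emph{maximal} conflict-free partial colouring and proving a structural characterisation of conflict-freeness on almost complete bipartite graphs that carries the common-neighbour hypothesis explicitly; this lets it split off the exceptional $|B|=2$ case cleanly and handle the generic case by a uniform rule.
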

\begin{proof} 
Clearly, by the definition of a conflict-free coloring, it suffices to show that every conflict-free coloring can be extended to a conflict-free coloring of the whole graph $G$. Assume otherwise, i.e. that there is a conflict-free coloring $\varphi:W \to V(C_5)$ with $W\not=V(G)$ which cannot be extended to a conflict-free coloring of $G$. We can assume without loss of generality that $\varphi$ is maximal with respect to this property, i.e. it is not possible to color any uncolored vertex of $G$ so that the resulting coloring is still conflict-free. 

By \cref{obs:s211-triangle-free-structure}, $G$ is a cycle, a path or an almost complete bipartite graph. We consider two cases. 

\paragraph{Case 1: $G$ is a cycle or a path.} Clearly, we can assume that at least one vertex of $G$ is colored. 
By maximality of the coloring $\varphi$, if $G$ is a path, then each end $v$ of this path is colored. Indeed, assume that $v$ does not have a color and let $u$ be the closest colored vertex. Depending on the distance from $v$ to $u$ we can always choose a color for $v$ so that the pair $\{ u,v\}$ is not conflicted. 

We proved that any uncolored vertex of $G$ is an internal vertex of a path in $G$ whose only colored vertices are the ends of this path. Consider such a path $P$ of the largest length and let $u$ and $v$ be the colored ends of this path. Let $\alpha$ (resp. $\beta$) be the color of $u$ (resp. $v$) in the coloring $\varphi$. Denote by $w$ the neighbor of $v$ on $P$. If $P$ has more than $2$ internal vertices then we color $w$ with a color which is a neighbor of the color $\beta$ and is different from the color $\alpha$. Obviously, such a color exists. The resulting extension of the coloring $\varphi$ is not conflicted, because $\{ u,w\}$ is not a conflicted pair in this extension, a contradiction with maximality of $\varphi$. 
If $P$ has at most $2$ internal vertices, then by the fact that $u,v$ are not conflicted and the definition of conflicted pair, $\vphi|_{\{u,v\}}$ can be extended to a homomorphism from $P$ to $C_5$. Therefore $\vphi$ can be extended to internal vertices of $P$ which contradicts the maximality of $\vphi$.
We have shown the lemma if $G$ is a path or a cycle.

\paragraph{Case 2: $G$ is an almost complete bipartite graph. } We will prove first the following statement. 
\begin{claim}\label{clm:conflictfree}
A coloring $\varphi:W \to V(C_5)$ of an almost complete bipartite graph $G$ is conflict-free if and only if the following conditions are satisfied 
\begin{enumerate}[(i)]
\item $\varphi$ is a $C_5$-coloring of the induced graph $G[W]$, and 
\item if some two vertices $u,v\in W$ in the same bipartition class have a common neighbor in $G$ then the colors of $u$ and $v$ are not neighbors, and
\item vertices in different bipartition classes have different colors.
\end{enumerate}
\end{claim}
\begin{claimproof}
First assume that that $\varphi$ is conflict-free. If some two adjacent vertices in $G[W]$ are not colored with colors which are neighbors then these vertices form a conflicted pair, a contradiction. Suppose that some two vertices $u,v\in W$ are in the same bipartition class and have a common neighbor, say $w$, in $G$. If the colors of $u$ and $v$ are neighbors and $w\not\in W$ then the pair $\{ u,v\}$ is conflicted by condition (ii) in the definition of a conflicted pair, a contradiction. If the colors of $u$ and $v$ are neighbors and $w\in W$ then the pair $\{ u,w\}$ or $\{ w,v\}$ is conflicted by condition (i) in the definition of a conflicted pair, a contradiction again. Finally, assume that some two vertices $u,v$ in different bipartition classes have the same color. If $u$ and $v$ are joined by an edge then the pair $\{ u,v\}$ is conflicted. Otherwise there is a path $u,w_1,w_2,v$ in $G$. In this case the pair $\{ u,v\}$ is conflicted by the condition (iii) or (ii) or (i) in the definition of a conflicted pair, a contradiction. We proved that the conditions (i)-(iii) are satisfied. 

Conversely, assume that the conditions (i)-(iii)  are satisfied for a coloring $\varphi$. One can readily verify that condition (i) (resp. (ii) and (iii))  implies that condition (i) (resp. (ii) and (iii)) in the definition of a conflicted pair is not satisfied. Thus, $\varphi$ is conflict-free which completes the proof of the claim. 
\end{claimproof}

Let $U$ and $V$ be the bipartition classes of the graph $G$. To prove the lemma for almost complete bipartite graphs we consider two subcases. 

\subparagraph{Subcase (i): Each two vertices $u,v\in V(G)$ in the same bipartition class have a common neighbor. } 
By \cref{clm:conflictfree} vertices of $V\cap W$ (resp. $U\cap W$) are colored in $\varphi$ with at most two colors and if both colors are used then these colors are not neighbors. Moreover, the colors used on vertices of $V\cap W$ are different from the colors used on vertices of $U\cap W$. Thus, without loss of generality, we can assume that the colors of vertices of $V\cap W$ are elements of the set $\{ 1,3\}$ and the colors of vertices of $U\cap W$ are elements of the set $\{ 2,4\}$. We observe that if both colors $1$ and $4$ are used by the coloring $\varphi$ then each of these colors is used on one vertex only and the vertices colored with these colors are non-adjacent in $G$. Otherwise the coloring $\varphi$ is not conflict-free. If some vertex of $V$ (resp. $U$) is not colored in the coloring $\varphi$, then we color it with color $3$ (resp. $2$). By \cref{clm:conflictfree} this extension of $\varphi$ is conflict-free, a contradiction with maximality of $\varphi$. 

\subparagraph{Subcase (ii): Some two vertices $u,v\in V(G)$ in the same bipartition class do not have a common neighbor. } 
We observe that if both bipartition classes of an almost complete bipartite graph $G$ have at least three vertices or $G$ is a star then 
each two vertices $u,v\in W$ in the same bipartition class have a common neighbor in $G$. So, $G$ has exactly two vertices in some bipartition class, say $V$, and has at least three vertices in the bipartition class $U$ (recall that $G$ is connected). Moreover, if $G$ is obtained from a complete bipartite graph by removing only one edge then the condition in subcase (ii) is not satisfied either. Thus, $G$ is obtained from a complete bipartite graph with $|V|=2$ and $|U|\geq 3$ by removing a $2$-edge matching. This graph has exactly two vertices, say $u,v\in U$, of degree $1$. This is the only pair of vertices in $G$ which does not have a common neighbor. If, in the coloring $\varphi$, $u$ or $v$ is not colored or the colors of $u$ and $v$ are not neighbors, then we proceed as in subcase (i). However, it may happen that in the coloring $\varphi$ the colors of the vertices $u$ and $v$ are neighbors, say $1$ and $2$, respectively. Let $u'$ (resp.$v'$) be the neighbor of $u$ (resp. $v$) in $G$. It follows from \cref{clm:conflictfree} that if $\varphi(u)=1$ and $\varphi(v)=2$ then $\varphi$ is conflict-free if and only if $\varphi(x)=4$ for every $x\in (W-\{ u,v\})\cap U$ (see condition (ii) in \cref{clm:conflictfree}), $\varphi(u')=5$ if $u'\in W$ and $\varphi(v')=3$ if $v'\in W$ (see conditions (i) and (iii) in \cref{clm:conflictfree}). Thus, the conflict-free coloring $\varphi$ can be extended to a conflict-free coloring of $G$. This contradiction completes the proof. 

\end{proof}

\subsection{The \textsc{\textbf{Independent Triangle Transversal Extension}} problem}
The following auxiliary problem plays a crucial role in our algorithm.

\problemdef%
{\textsc{Independent Triangle Transversal Extension} (\itte)}%
{A graph $G$, sets $X',Y' \subseteq V(G)$, and $E' \subseteq E(G)$}%
{Is there an independent set $X \subseteq V(G)$, such that (i) $X' \subseteq X$, (ii) for every $e\in E'$ it holds that $e \cap X \neq \emptyset$, (iii) $Y' \cap X = \emptyset$, (iv) $G-X$ is triangle-free?}


We observe that if $G$ contains a $K_4$, then at most one vertex of such a clique can belong to an independent set, and the graph induced by the remaining part is not triangle-free. 
Thus we immediately obtain the following observation which will be used throughout the paper implicitly.

\begin{observation}\label{obs:no-k4}
If $(G,X',Y',E')$ is an instance of \itte such that $G$ is not $K_4$-free, then it is a no-instance.
\end{observation}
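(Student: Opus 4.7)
The plan is to argue by direct contradiction. Suppose $(G,X',Y',E')$ is an instance of \itte and $G$ contains a $K_4$, say on vertex set $K = \{a,b,c,d\}$. Assume for contradiction that there exists an independent set $X \subseteq V(G)$ witnessing that $(G,X',Y',E')$ is a yes-instance, so in particular $X$ is independent and $G-X$ is triangle-free.

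Since $K$ is a clique and $X$ is independent, at most one vertex of $K$ belongs to $X$, i.e., $|X \cap K| \le 1$. Consequently $|K \setminus X| \ge 3$, so at least three of the four pairwise adjacent vertices $a,b,c,d$ survive in $G-X$. Any three pairwise adjacent vertices form a triangle, which would contradict the triangle-freeness of $G-X$.

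The argument uses only conditions (i) (independence, via the set being an independent set) and (iv) (triangle-freeness of $G-X$) from the definition of \itte; conditions (ii) and (iii) concerning $X'$, $Y'$, and $E'$ play no role. Since the obstruction is local (a single $K_4$), it applies regardless of the additional constraints imposed by $X'$, $Y'$, and $E'$. There is no real obstacle here; the entire observation amounts to the pigeonhole remark $|X \cap K| \le 1 \Rightarrow |K \setminus X| \ge 3$.
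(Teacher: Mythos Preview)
Your proof is correct and takes essentially the same approach as the paper: at most one vertex of a $K_4$ can lie in the independent set $X$, so at least three pairwise adjacent vertices survive in $G-X$, contradicting triangle-freeness. The paper states this reasoning in one sentence just before the observation.
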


As the final result of this section we show that \colorext{W_5} in $S_{2,1,1}$-free graphs
can be reduced in polynomial time to \itte in $S_{2,1,1}$-free graphs.

\begin{theorem}\label{thm:ittereduce}
The \colorext{W_5} in $S_{2,1,1}$-free graphs can be reduced in polynomial time to the \itte problem in $S_{2,1,1}$-free graphs.
\end{theorem}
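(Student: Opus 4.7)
The plan is to reduce a \colorext{W_5} instance $(G, U, \vphi)$ with $G$ being $S_{2,1,1}$-free to an \itte instance $(G, X', Y', E')$ on the same underlying graph $G$, which is therefore automatically $S_{2,1,1}$-free. The guiding observation is that any homomorphism $\psi \colon G \to W_5$ extending $\vphi$ determines an independent set $X := \psi^{-1}(0)$ satisfying $\vphi^{-1}(0) \subseteq X$, $\vphi^{-1}(\{1,\ldots,5\}) \cap X = \emptyset$, and with the property that $\psi|_{G-X}$ is a $C_5$-coloring of $G - X$ extending $\vphi|_{V(G-X) \cap U}$. Conversely, if such an $X$ is chosen so that $G - X$ is triangle-free and $\vphi|_{V(G-X) \cap U}$ is a conflict-free coloring of $G - X$, then $G - X$ is $\{S_{2,1,1}, K_3\}$-free, so \cref{obs:pre-ext-triangle-free} extends the restricted precoloring to a $C_5$-homomorphism of $G - X$, and sending $X$ to $0$ completes a $W_5$-homomorphism extending $\vphi$. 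Thus it suffices to encode the combined requirements on $X$ as an \itte instance.

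The construction begins by setting $Y' := \vphi^{-1}(\{1,\ldots,5\})$ and putting $\vphi^{-1}(0)$ into $X'$, so that items~(i) and~(iii) of \itte capture the precoloring constraints and item~(iv) accounts for triangle-freeness of $G - X$. The three cases in the definition of a conflicted pair then translate as follows. A type~(i) conflict, namely an edge $uv$ with $u, v \in Y'$ and $\vphi(u)\vphi(v) \notin E(C_5)$, is independent of the choice of $X$; such edges are detected in a preprocessing step, and if any is found we return a trivial no-instance. A type~(ii) conflict, namely a path $u, w, v$ with $u, v \in Y'$, $w \in V(G) \setminus U$, and $\vphi(u), \vphi(v)$ adjacent in $C_5$, forces $w \in X$ because no vertex of $C_5$ is adjacent to two adjacent vertices of $C_5$; hence every such $w$ is added to $X'$. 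A type~(iii) conflict, namely a path $u, w_1, w_2, v$ with $u, v \in Y'$, $w_1, w_2 \in V(G) \setminus U$, and $\vphi(u) = \vphi(v)$, forces $\{w_1, w_2\} \cap X \neq \emptyset$ because $C_5$ admits no closed walk of length three; since $w_1 w_2 \in E(G)$, we add this edge to $E'$.

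The equivalence of the two instances is then a direct verification. In the forward direction, $X := \psi^{-1}(0)$ satisfies items~(i)--(iv) of \itte by the opening observation, while the additional entries of $X'$ and $E'$ are respected because any type~(ii) witness $w$ must lie in $X$ (otherwise $\psi(w) \in V(C_5)$ would have to be adjacent to two adjacent colors in $C_5$), and any type~(iii) witness $\{w_1, w_2\}$ must meet $X$ by the closed-walk obstruction. In the backward direction, given an \itte solution $X$, the construction of $X'$ and $E'$ rules out every conflicted pair of $\vphi|_{V(G-X) \cap U}$ on $G - X$, after which \cref{obs:pre-ext-triangle-free} produces the required $C_5$-coloring. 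The main bookkeeping subtlety is aligning the condition ``$w \notin W$'' from the conflict definition on $G - X$, where $W = V(G-X) \cap U$, with the condition ``$w \notin U$'' used above; this is automatic because $w \in V(G - X)$ already implies $w \notin X$. Polynomial runtime follows as $X'$ and $E'$ can be built in time polynomial in $|V(G)|$ by iterating over pairs of precolored vertices together with short paths between them.
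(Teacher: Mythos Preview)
Your proof is correct and follows essentially the same approach as the paper's: both set $Y'=\vphi^{-1}(\{1,\dots,5\})$, initialize $X'$ with $\vphi^{-1}(0)$, do a preprocessing pass for precolored adjacent pairs whose colors are non-adjacent, and then translate the length-$2$ and length-$3$ conflict witnesses into additions to $X'$ and $E'$, respectively, with the equivalence verified via \cref{obs:pre-ext-triangle-free}. The only cosmetic differences are that the paper preprocesses over all of $U$ (using $W_5$) rather than just $Y'$ (using $C_5$), and quantifies internal witness vertices as ``$\notin Y'$'' rather than your ``$\notin U$''; since $\vphi^{-1}(0)\subseteq X'\subseteq X$ these formulations are interchangeable, and your bookkeeping remark about $W=V(G-X)\cap U$ correctly handles the alignment.
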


\begin{proof}
Let $(G,U,\vphi)$ be an instance of \colorext{W_5}. Note that we can assume that there are no vertices $u,v\in U$ such that $uv\in E(G)$ and $\vphi(u)\vphi(v)\notin E(W_5)$, otherwise we immediately report a no-instance (formally, we can return a trivial no-instance of \itte, e.g., $(K_4,\emptyset,\emptyset,\emptyset)$). This can be verified in polynomial time. 

We define the instance $(G,X',Y',E')$ of \itte as follows. We initialize $X'$, $Y'$ and $E'$ as empty sets. We add to $X'$ every vertex precolored with $0$ and we add every vertex precolored with vertex other than $0$ to $Y'$. 
To construct $E'$, consider each pair $u,v \in U$ of conflicted vertices with respect to $\varphi: Y' \to [5]$, and let $P$ be a witness of $u$ and $v$.
By our first assumption, $|P|\geq 2$. 
If the consecutive vertices of $P$ are $u,w,v$, then we add $w$ to $X'$. 
If the consecutive vertices of $P$ are $u,w_1,w_2,v$, then we add the edge $w_1w_2$ to $E'$. This completes the construction of the instance $(G,X',Y',E')$ of \itte. Clearly the reduction is done in polynomial time.

Let us verify that the instance $(G,X',Y',E')$ of \itte is equivalent to the instance $(G,U,\vphi)$ of \colorext{W_5}. First assume that there is a set $X\subseteq V(G)$ that is a solution to the instance $(G,X',Y',E')$ of \itte. Then $G-X$ is triangle-free. Suppose that there is a conflicted pair of vertices $u,v$ in $G-X$ and let $P$ be the path such that the precoloring of $u,v$ cannot be extended to $P$. Observe that by the construction of $E'$ at least one vertex of $P$ must be in $X$, a contradiction. Hence, by calling \cref{obs:pre-ext-triangle-free} on $G-X$, we conclude that the precoloring of $G-X$ can be extended to all vertices of $G-X$ using only colors $1,2,3,4,5$, and then extended to whole $G$ by coloring every vertex of $X$ with $0$.

So now suppose that $(G,U,\vphi)$ is a yes-instance of \colorext{W_5}. Then there exists a $W_5$-coloring $\psi$ of $G$ that extends $\vphi$. Define $X:=\psi^{-1}(0)$. Let us verify that $X$ satisfies the desired properties. If follows from the definition that $X$ is an independent set. Suppose that $G-X$ contains a triangle. Then $G-X \not\to C_5$, and thus the vertices of $G-X$ cannot be colored using only colors $1,2,3,4,5$, a contradiction. Consider a vertex $x\in X'$. Then either $\varphi(x)=0$, or there is a path with consecutive vertices $u,x,v$ with $u,v \in U$, such that $\varphi$ cannot be extended to $x$ using only colors $1,2,3,4,5$. Therefore in both cases $\psi(x)=0$, so $x\in X$. Now consider $y\in Y'$. Then $\varphi(y)\neq 0$, so $y\notin \psi^{-1}(0)=X$. Finally, consider an edge $xy\in E'$. Then there is a path with consecutive vertices $u,x,y,v$ with $u,v \in U$, so that $\varphi$ cannot be extended to $x,y$ using only colors $1,2,3,4,5$. We conclude that one of $x,y$ is mapped by $\psi$ to $0$, and thus one of $x,y$ is in $X$. That completes the proof.
\end{proof}

\newpage
\section{Solving \itte: basic toolbox}\label{sec:tools}
In this seciton we provide some basic tools to deal with \itte.
In particular, we prove that the problem behaves well under standard graph decompositions: modular decomposition, clique-cutset decomposition, and tree decomposition.

First, let us prove that in order to solve \itte it is sufficient to solve instances $(G,X',Y',E')$ of \itte where $Y' = \emptyset$.

\begin{proposition}\label{prop:itt-itte}
Let $(G,X',Y',E')$ be an instance of \itte. We can construct in polynomial time an instance $(\tG,\tX',\emptyset,\tE')$ of \itte that is equivalent to $(G,X',Y',E')$ and such that $\tG$ is an induced subgraph of $G$.
\end{proposition}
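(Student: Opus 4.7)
The plan is to eliminate the vertices of $Y'$ one at a time, preserving equivalence with the original instance at every step. Fix any $y \in Y'$; since $y$ is forbidden from every feasible $X$, two local effects of $y$ must be encoded before we can safely delete it. First, every triangle $\{y,u,v\}$ of $G$ survives in $G - X$ unless $\{u,v\} \cap X \neq \emptyset$, so each such triangle yields an $E'$-style constraint on the pair $\{u,v\}$. Second, every edge $\{y,v\} \in E'$ forces $v \in X$, because $y \notin X$.

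Formally, we iterate the following reduction. If $X' \cap Y' \neq \emptyset$, output a trivial no-instance such as $(K_4, \emptyset, \emptyset, \emptyset)$ and stop. If $Y' = \emptyset$, return $(G, X', \emptyset, E')$. Otherwise pick any $y \in Y'$ and set
\[
\tG := G - y, \qquad \tX' := X' \cup \{v \sth \{y,v\} \in E'\}, \qquad \widetilde{Y}' := Y' \setminus \{y\},
\]
\[
\tE' := \{e \in E' \sth y \notin e\} \;\cup\; \{uv \in E(G) \sth u,v \in N_G(y)\},
\]
and recurse on $(\tG, \tX', \widetilde{Y}', \tE')$. The procedure terminates in at most $|Y'|$ iterations, each running in polynomial time, and $\tG$ remains an induced subgraph of $G$ throughout.

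The core step is proving equivalence of $(G, X', Y', E')$ and the one-step reduction $(\tG, \tX', \widetilde{Y}', \tE')$, assuming $X' \cap Y' = \emptyset$. If $X$ solves the original instance then $y \notin X$ yields $X \subseteq V(\tG)$; the containment $\tX' \subseteq X$ follows because each new vertex of $\tX'$ is forced by an $E'$-constraint through $y$, and every new edge of $\tE'$ is hit by $X$ because it comes from a triangle $\{y,u,v\}$ of $G$ whose vertex $y$ lies outside $X$. Conversely, if $X \subseteq V(\tG)$ solves the reduced instance, then $y \notin X$, so $X$ is independent in $G$, $Y' \cap X = \emptyset$, and each edge of $E'$ through $y$ is hit because its other endpoint has been moved to $\tX'$; the only nontrivial point is triangle-freeness of $G - X$, which follows by case analysis: a triangle avoiding $y$ already lies in $\tG - X$, while a triangle through $y$ produces an edge of $\tE'$ that must be hit. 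The main subtlety I expect is that downstream iterations may produce $\tX' \cap \widetilde{Y}' \neq \emptyset$; this happens exactly when the original instance contains an $E'$-edge inside $Y'$ or a triangle of $G$ inside $Y'$, both of which are genuine obstructions (no set $X$ disjoint from $Y'$ can hit them), so the trivial no-instance output is correct.
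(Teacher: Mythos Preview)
Your proof is correct and takes essentially the same approach as the paper: encode the constraints forced by vertices of $Y'$ (edges of $E'$ through $Y'$ become forced vertices, triangles through $Y'$ become new $E'$-edges) and then delete $Y'$. The only difference is cosmetic: the paper removes all of $Y'$ in a single shot, while you peel off one vertex at a time and chain the equivalences. Your iterative version converges to exactly the same final instance, and your analysis of the ``$\tX' \cap \widetilde{Y}' \neq \emptyset$'' case is correct (indeed, tracing back through the iterations shows this can only arise from an original $E'$-edge inside $Y'$ or a triangle inside $Y'$), though you could also have justified it more directly by invoking the already-established chain of equivalences.
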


\begin{proof}
We initialize $\tX':=X'$ and $\tE':=E'$. If there is an edge $uv\in E'$ with $u,v\in Y'$, then we conclude that we are dealing with a no-instance. If there is an edge $uv\in E'$ with $u\in Y'$ and $v\notin Y'$, then we add $v$ to $\tX'$. Consider a triangle $uvw$. If $u,v,w\in Y'$, we report a no-instance. If $u,v\in Y'$ and $w\notin Y'$, we add $w$ to $\tX'$. If $u\in Y'$ and $v,w\notin Y'$, then we add the edge $vw$ to $\tE'$. Finally, we set $\tG=G - Y'$. 
This completes the construction of the instance $(\tG,\tX',\emptyset,\tE')$. Clearly the construction is performed in polynomial time.

Let us verify the equivalence. First, assume that $(G,X',Y',E')$ is a yes-instance and $X$ is a solution. Let us show that $X$ is also a solution for $(\tG,\tX',\emptyset,\tE')$. Consider a vertex $v\in \tX'$. If $v\in X'$, then $v\in X$ since $X$ is a solution for $(G,X',Y',E')$. If there is an edge $uv\in E'$ with $u\in Y'$, then it must hold that $v\in X$ since we cannot have $u\in X$. If there is a triangle $uvw$ with $u,w\in Y'$, then $y\in X$ since $G-X$ is triangle-free and $u,w$ cannot be in $X$. Now let $uv\in \tE'$. If $uv\in E'$, then clearly one of $u,v$ must be in $X$. If there is a triangle $uvw$ with $w\in Y'$, then one of $u,v$ must be in $X$ since $G-X$ is triangle-free. Finally, observe that if $G-X$ is triangle-free, then $\tG-X$, as an induced subgraph of $G-X$, is also triangle-free.

So now assume that there is a solution $X$ for $(\tG,\tX',\emptyset,\tE')$. We will verify that $X$ is also a solution for  $(G,X',Y',E')$. First, observe that since $X\subseteq V(\tG)$, we have that $X\cap Y'=\emptyset$. Consider $u\in X'$. Note that $X'\subseteq \tX'$, so $u\in X$. Now consider $uv\in E'$. If $u,v\notin Y'$, then $uv \in \tE'$ and one of $u,v$ is in $X$. If $u\in Y'$, then $v\in \tX'$ and thus $v\in X$. It remains to verify that $G-X$ is triangle-free. Suppose to the contrary that there is a triangle $uvw$ in $G-X$. Since $\tG-X$ is triangle-free, then at least one of $u,v,w$, say $u$, must be in $Y'$. Note that if all vertices $u,v,w\in Y'$, then we already reported a no-instance. Therefore, either we added an edge $vw$ to $\tE'$ and one of $v,w$ is in $X$, or we added one of $v,w$, say $v$, to $\tX'$, and thus $v\in X$, a contradiction.
\end{proof}

\subsection{Modular decomposition and prime graphs}
Let $G$ be a graph and let $M$ be a proper subset of $V(G)$. We say that $M$ is a \emph{module} of $G$ if for every vertex $v\in V(G) \setminus M$ either $v$ is adjacent to every vertex of $M$ or is non-adjacent to every vertex of $M$. We say that a module $M$ is \emph{non-trivial} if $|M|>1$, otherwise $M$ is \emph{trivial}. We say that $G$ is \emph{prime} if every module of $G$ is trivial. 

Let $\cM$ be a partition of $V(G)$ into modules. We define the \emph{quotient} graph $Q(\cM)$ to be a graph such that there is a bijection between the modules in $\cM$ and vertices of $Q(\cM)$, and for vertices $m,m'$ of $Q(\cM)$, there is an edge $mm'$ if and only if the corresponding modules $M,M'$ are adjacent (i.e., all edges between $M$ and $M'$ are present). Observe that $Q(\cM)$ is an induced subgraph of $G$ since it can be obtained by picking exactly one vertex from each module in $\cM$.

A classic result of Gallai~\cite{gallai} asserts that if $G$ and $\overline{G}$ are both connected, then the partition $\cM$ of $V(G)$ into inclusion-wise maximal modules is uniquely determined and can be found in polynomial time. In this case the quotient graph $Q(\cM)$ is prime.

In the next lemma we show that \itte combines well with modular decompositions of graphs.

\begin{lemma}\label{lem:X-and-X-star}
Let $\cX$ be a hereditary class of graphs and let $\cX^*$ be the class of all induced subgraphs of the graphs in $\cX$ that are either prime or cliques. If \itte can be solved in polynomial time on $\cX^*$, then it can be solved in polynomial time on $\cX$.
\end{lemma}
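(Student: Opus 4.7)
My plan is to argue by induction on $|V(G)|$. First, I would invoke \cref{prop:itt-itte} to reduce to the case $Y' = \emptyset$. If $G$ is prime or a clique, then $G \in \cX^*$ and the hypothesis is applied directly. Otherwise I would compute the modular decomposition of $G$. If $G$ is disconnected, I would solve \itte on each connected component independently; each component is a proper induced subgraph of $G$, hence in $\cX$ by hereditariness, and strictly smaller. If $G$ is connected, then by Gallai's theorem the partition $\cM = \{M_1, \ldots, M_k\}$ of $V(G)$ into maximal modules is uniquely determined, and the quotient $Q = Q(\cM)$ is either prime (when $\overline{G}$ is connected) or a clique (when $\overline{G}$ is disconnected, i.e., $G$ is the join of its modules). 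In both sub-cases $Q$ is an induced subgraph of $G$ lying in $\cX^*$, and every $G[M_i]$ is a proper induced subgraph of $G$ in $\cX$.

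The key observation is that because $M_i$ is a module, the external impact of $X \cap M_i$ is determined by a bounded amount of data: (a) whether $X \cap M_i$ is empty, all of $M_i$, or a proper non-empty subset; (b) whether $G[M_i] - X$ contains an edge (the only way $M_i$ can participate in a cross-module triangle together with a single vertex from a neighbouring module); and (c) for each endpoint of a cross-module edge of $E'$ that lies in $M_i$, whether that endpoint belongs to $X$. For each combination of (a)--(c) (only constantly many global types, but possibly several forced vertices from (c)), I would form a local \itte instance on $G[M_i]$ with the appropriate additional forcings, and solve it by induction, producing the list of feasible types per module.

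Next, I would reduce to a single \itte instance on $Q \in \cX^*$. Each module $M_i$ is represented by its vertex $m_i$ in $Q$; I would place $m_i$ into $X'_Q$ exactly when $M_i$ must be fully absorbed by $X$ (for example, if an adjacent module $M_j$ is of the ``remainder-has-an-edge'' type, then any non-absorbed vertex of $M_i$ would close a cross-module triangle), and I would copy each cross-module $E'$-edge into $E'_Q$. Independence of $X_Q$ in $Q$ then encodes the independence of the set $\{M_i : X \cap M_i \neq \emptyset\}$; triangle-freeness of $Q - X_Q$ rules out triangles spanning three distinct modules; and each edge of $E'_Q$ forces an absorbed endpoint module for the corresponding cross-module $E'$-edge. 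Calling the $\cX^*$-oracle on this quotient instance and gluing the stored per-module solutions yields the required global $X$.

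The hard part will be verifying the soundness and completeness of this local/global decoupling: forced-absorption at the quotient level can induce new absorptions in adjacent modules, and the vertex-specific forcings from cross-module $E'$-edges must remain consistent with the quotient oracle's output. I expect to handle this by iterating the local and quotient reductions; each iteration strictly enlarges the set of forced-absorbed modules, so the process terminates after at most $k$ rounds, keeping the overall running time polynomial.
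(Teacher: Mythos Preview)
Your high-level plan (reduce to $Y'=\emptyset$, pass to the modular decomposition, solve an \itte instance on the quotient $Q\in\cX^*$) matches the paper, but you are missing the structural observation that makes the reduction go through, and without it your encoding step has a real gap.

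The paper first observes (via \cref{obs:no-k4}) that one may assume $G$ is $K_4$-free. Since every maximal module has a neighbour in another module, this forces each module to be \emph{triangle-free}. Hence every module is one of three explicit types: an independent set, a bipartite graph with an edge, or a non-bipartite triangle-free graph. From this the paper deduces, for instance, that if $M$ is non-bipartite then $X\cap M=\emptyset$ in any solution, and if $M$ is bipartite with an edge then $X\cap M$ is either empty or an entire bipartition class. With this finite classification in hand, the paper never recurses into the modules at all: it reads off $\tX'$ and $\tE'$ on $Q$ directly from the module types and solves a \emph{single} quotient instance.

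Your proposal, by contrast, tries to recurse into $G[M_i]$ and then compress the outcome into ``constantly many global types'' for the quotient call. The trouble is your item (c): for a cross-module edge $uv\in E'$ with $u\in M_i$, $v\in M_j$, putting $m_im_j$ into $E'_Q$ only tells you that \emph{some} vertex of $M_i$ or $M_j$ is in $X$, not that the specific endpoint $u$ or $v$ is. Your semantics for $m_i\in X_Q$ oscillates between ``$M_i$ fully absorbed'' and ``$M_i\cap X\neq\emptyset$'', and neither reading makes the $E'$-constraint sound without further work. Similarly, your three-way split in (a) cannot be expressed by a single binary decision $m_i\in X_Q$ versus $m_i\notin X_Q$. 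The paper resolves both issues simultaneously: because type-(1) modules are independent one may take $M_i\subseteq X$ whenever $m_i\in\tX$, and for type-(2b) modules one takes the bipartition class that by definition hits every relevant $E'$-endpoint; types (2a) and (3) force all neighbouring (necessarily type-(1)) modules into $\tX'$. Your proposed ``iteration'' does not substitute for this: you would need to show that the forcings stabilise to something expressible as a single \itte instance on $Q$, and you have not done so.

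In short: add the $K_4$-freeness step, classify the modules by their (triangle-free) structure as the paper does, and you will find that no recursion into modules and no iteration is needed.
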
 

\begin{proof}
Let $(G,X',Y',E')$ be an instance of \itte such that $G\in \cX$. By \cref{prop:itt-itte}, we can assume that $Y'=\emptyset$. Note that by \cref{prop:itt-itte}, we obtain an instance that is still in $\cX$ since the class is hereditary. Clearly, if $G$ contains only one vertex, the problem is trivial and can be solved in constant time. If $G$ is disconnected, then we can solve the problem on every instance separately. If $\overline{G}$ is disconnected, then observe that the vertex set of a connected component of $\overline{G}$ is a module. In this case we define the partition $\cM$ of $V(G)$ into modules so that the modules are the vertex sets of connected components in $\overline{G}$. So now assume that both $G$ and $\overline{G}$ are connected and $G$ contains at least two vertices. In this case we define $\cM$ as a partition of $V(G)$ into maximal modules -- recall that if $G$ and $\overline{G}$ are connected, then $\cM$ is uniquely determined and can be found in polynomial time~\cite{gallai}. 

Let $Q:=Q(\cM)$ be the quotient graph of $G$. Note that $Q$ contains at least two vertices. If $\overline{G}$ is disconnected, then $Q$ is a clique and if $\overline{G}$ is connected, then $Q$ is prime. Hence, we have that $Q\in \cX^*$. For a module $M$ of $\cM$ we will denote by $m$ the vertex representing $M$ in the graph $Q$. 

We will construct an instance $(Q,\tX',\emptyset,\tE')$ of \itte equivalent to the instance $(G,X',\emptyset,E')$.

Since there are at least two vertices in $G$ and $G$ is connected, the graph $Q$ is also connected and therefore there are no isolated vertices in $Q$. We can conclude that no module corresponding to a vertex of $Q$ contains a triangle since, by \cref{obs:no-k4}, $G$ is $K_4$-free.

\paragraph{Types of modules.} We distinguish three types of modules: (1) independent sets, (2) bipartite with at least one edge, and (3) non-bipartite. We observe that there are no edges in $G$ joining any two modules of type (2) or (3) because $G$ is $K_4$-free. Consequently, in the graph $Q$ the vertices representing modules of type (2) or (3) are adjacent only to vertices representing modules of type (1).

For a given instance $(G,X',\emptyset,E')$, we distinguish two subtypes of modules of type (2). A module $M$ of type (2) is of type (2b) if there is a bipartition class in $G[M]$ which contains $X'\cap M$ and a vertex of every edge of $E'$ intersecting $M$. A module of type (2) is of type (2a) if it is not of type (2b).

Note that in any solution $X$ of the \itte problem, for a module $M$ of type (3), we have $X\cap M=\emptyset$. Suppose to the contrary that $X\cap M\neq \emptyset$. Then for any module $M'$ adjacent to $M$, we have $M'\cap X=\emptyset$. Moreover, since $G[M]$ is non-bipartite, there is an edge $uv$ in $G[M]-X$. Therefore, for any vertex $w$ in an adjacent module, $uvw$ is a triangle in $G-X$, a contradiction. Similarly, we can observe that if for a module $M$ of type (2) we have $M\cap X\neq \emptyset$, then one bipartition class of $G[M]$ is fully taken to $X$. So if $M$ of type (3) and $X'\cap M \neq \emptyset$, or $M$ of type (2) and there is a connected component $C$ of $G[M']$ with two vertices of $X'$ in distinct bipartition classes, then we can report a no-instance.

\paragraph{Construction of the equivalent instance.} The set $\tX'$ consists of the vertices $m$ of $Q$ such that the corresponding module $M$ of $G$ satisfies at least one of the following conditions:
\begin{enumerate}
\item $M$ contains some vertex of $X'$, 
\item $G[M]$ contains some edge of $E'$,
\item some neighbor of $M$ is of type (3), 
\item some neighbor of $M$ is of type (2a).
\end{enumerate}

The set $\tE'$ consists of the edges $mm'$ of $Q$ such that for the corresponding modules $M, M'$ of $G$ satisfy at least one of the following conditions:
\begin{enumerate}
\item both $M$ and $M'$ are of type (1) and there is an edge in $E'$ joining a vertex in $M$ with a vertex in $M'$, 
\item one of the modules $M$ or $M'$ is of type (2b). 
\end{enumerate}

This completes the construction of $(Q,\tX',\emptyset,\tE')$.


\paragraph{Correctness.} Let us verify the correctness, i.e., that the created instance $(Q,\tX',\emptyset,\tE')$ is equivalent to the instance $(G,X',\emptyset,E')$. 

First assume that $X$ is a solution for the instance $(G,X',\emptyset,E')$. We will prove that $\tX=\{ m\in V(Q): M\cap X\not=\emptyset\}$ is a solution for the instance $(Q,\tX',\emptyset,\tE')$. Since $X$ is an independent set of vertices of $G$, $\tX$ is an independent set of vertices of $Q$. Suppose the set $\tX$ does not intersect some triangle $m_1m_2m_3$ in $Q$. Then each triangle in $G$ which has one vertex in $M_1$, another in $M_2$ and the last one $M_3$ has no vertex in $X$, a contradiction. 

Let us show now that $\tX'\subseteq\tX$. Obviously, every module $M$ containing a vertex of $X'$ or an edge of $E'$ has a non-empty intersection with $X$, so the corresponding vertex $m$ in $Q$ is in $\tX$. Consider now a module $M$ whose neighbor, say $M'$, is of type (3) or (2a) and suppose no vertex of $M$ is in $X$. Then, every triangle with one vertex in $M$ and the other two in $M'$  has a common vertex with $X$ in $M'$. Moreover, all modules which are neighbors of $M'$ contain no vertex of $X$, because $X$ is independent. The set $X\cap M'$ must be an independent set of vertices intersecting every edge of $G[M]$, otherwise vertices of some edge in $M'$ together with any vertex of $M$ form a triangle with no vertex in $X$. This is, however, impossible if $M'$ is of type (3) because $G[M']$ is a non-bipartite graph, a contradiction. Hence, $X$ intersects the module $M$ if $M'$ is a module of type (3). If $M'$ is of type (2a) then $X\cap M'$ must be a bipartition class of $M'$ that contains all vertices of $X' \cap M'$. By the definition of a module of type (2a), there is an edge $e\in E'$ with an endvertex in $M'$ and no endvertex in $X\cap M'$. Thus, the other endvertex of $e$ is a member of some module which is a neighbor of $M'$. We have already observed that no vertex of $X$ is a member of such module. Thus, $e\in E'$ is disjoint from $X$, a contradiction. Hence, $X$ intersects the module $M$ if $M'$ is a module of type (2a). We have shown that $\tX'\subseteq\tX$.

Let us prove that $\tX$ intersects every edge of $\tE'$. 
Consider an edge $mm' \in \tE'$. If the corresponding modules $M$ and $M'$ are of type (1) and there is an edge in $E'$ joining a vertex in $M$ with a vertex in $M'$, then one of the ends of this edge is in $X$, so $X$ intersects $M$ or $M'$ and, consequently, $m\in \tX$ or $m'\in \tX$. If one of the modules $M$ or $M'$, say $M'$, is of type (2b) then vertices of any edge in $M'$ together with a vertex of $M$ form a triangle, so $X$ contains a vertex of $M$ or $M'$. Consequently, one of the ends of the edge $mm'\in\tE'$ is in $\tX$. We have shown that $\tX$ intersects each edge of $\tE'$ so $\tX$ is indeed a solution for the instance $(Q,\tX',\emptyset,\tE')$. 

Assume now that $\tX$ is a solution for the instance $(Q,\tX',\emptyset,\tE')$ of \itte. We define $X$ to be the set consisting of all vertices of every module $M$ of type (1) such that $m\in \tX$ and all vertices of a bipartition class, say $B_{M'}$, of each module $M'$ of type (2b) such that $m'\in\tX$ and $B_{M'}$ intersects every edge of $E'$ with an endvertex in $M'$ and contains all vertices of the set $X'\cap M'$. Notice that by the definition of a module of type (2b) such a bipartition class exists. We will prove that $X$ is a solution for the instance $(G,X',\emptyset,E')$. 

Since $\tX$ is an independent set of vertices of $Q$, it is clear that $X$ is an independent set of vertices of $G$. As we have already observed, no module corresponding to a vertex of $Q$ contains a triangle. If a triangle $T$ in $G$ has two vertices $u,w$ in the same module $M$ and the third one, say $v$, in another module, say $M'$, then $M$ must be of type (2) or (3) and $M'$ must be of type (1). If $M$ is of type (2a) or (3) then $v \in X$ because, by the definition of $\tX'$, $m'\in\tX'\subseteq\tX$. If $M$ is of type (2b) then, by the definition of $\tE'$,  $mm'\in\tE'$, so $m'\in\tX$ or $m\in\tX$. By definition of $X$, in the former case, we have $v \in X$, as $M'$ is of type (1), and in the latter one, either $u \in X$ or $w \in X$. Last, if a triangle $T$ in $G$ has its vertices in three different modules $M,M',M''$ then these modules must be of type (1) because otherwise there is a copy of $K_4$ in $G$. Since the vertices $m,m',m''$ form a triangle in $Q$, one of these vertices, say $m$, is in $\tX$. Consequently the vertex of $T$ contained in $M$ is in $X$ because $M\subseteq X$. We have shown that every triangle in $G$ has a vertex in $X$.  

Let us prove now that $X$ intersects every edge $e\in E'$. If $e$ is contained in some module $M$ then $m\in\tX'\subseteq\tX$. Notice that $M$ is of type neither (3) nor (2a) because the vertices of $Q$ corresponding to all neighbors of modules of these types are in $\tX'$ and $\tX'$ is independent because $\tX'\subseteq\tX$. Thus, $M$ is of type (2b), so we are done by the definition of $X$. If $e\in E'$ joins two vertices of modules $M$ and $M'$ of type (1), then $mm'\in\tE'$ so $m\in\tX$ or  $m'\in\tX$ and we are done by the definition of $X$ again. If $e\in E'$ joins a vertex of a module $M$ of type (1) with a vertex of a module of type (3) or (2a) then $m\in\tX'\subseteq\tX$ and we are done again. Finally, if $e\in E'$ joins a vertex of a module $M$ of type (1) with a vertex of a module $M'$ of type (2b) then there is an edge $mm'\in\tE'$ so $m\in\tX$ or $m'\in\tX$. In both cases we conclude, by the definition of $X$, that $X$ intersects $e$. 

We shall finally prove that $X'\subseteq X$. We observe that no vertex of $X'$ is in a module of type (2a) or (3). Indeed, suppose to the contrary that $M\cap X'\neq\emptyset$ for a module $M$ of type (2a) or (3). Then the vertices in $Q$ corresponding to the modules adjacent to $M$ are in $\tX'$. Moreover, $m\in \tX'$ since $M$ contains a vertex from $X'$. Therefore the set $\tX'$ is not independent. Since $\tX$ contains $\tX'$, the set $\tX$ is not independent either. Thus, vertices of $X'$ are contained in modules $M$ of type (1) or (2b) such that $m\in\tX'\subseteq\tX$. By the definition of $X$, we have $X'\subseteq X$.

\paragraph{Running time.}  Classifying the modules, i.e., verifying if they are independent or bipartite, clearly can be done in polynomial time. Classifying modules of type (2) into subtypes, for each module $M$ of type (2) can be done in polynomial time. Indeed, it is enough to verify the condition for each connected component of $G[M]$ separately, and this can be done in polynomial time. Constructing the sets $\tX'$ and $\tE'$ also can be done in polynomial time. We solve the problem on $(Q,\tX',\emptyset,\tE')$ in polynomial time since $Q\in \cX^*$. Therefore the instance $(G,X',\emptyset,E')$ can be solved in polynomial time. That completes the proof.
\end{proof}

\subsection{Clique cutsets and atoms}
A (possibly empty) set $C \subseteq V(G)$ is a \emph{cutset} in $G$, if $V(G)-C$ is disconnected.
We say that $C$ is a \emph{clique cutset} if $C$ is a cutset and a clique.
We say that $G$ is an \emph{atom} if it does not contain clique cutsets. Note that, in particular, every atom is connected.
A triple $(A,C,B)$ is a \emph{clique cutset partition} if it is a partition of $V(G)$ such that $C$ is a clique cutset, there is no edge between $A$ and $B$, and $G[A \cup C]$ is an atom.

We will need the following.
\begin{theorem}[Tarjan~\cite{DBLP:journals/dm/Tarjan85}]\label{thm:clique-cutsets}
Let $G$ be a graph.
Then in polynomial time we can find a clique cutset partition or correctly conclude that $G$ is an atom.
\end{theorem}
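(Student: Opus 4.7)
The plan is to exploit the classical connection between clique cutsets of $G$ and perfect elimination orderings of minimal triangulations of $G$. Recall that a chordal graph admits a \emph{perfect elimination ordering} (PEO), i.e., an ordering $v_1, \ldots, v_n$ of the vertices such that the set $N(v_i) \cap \{v_{i+1}, \ldots, v_n\}$ is a clique for every $i$. A \emph{minimal triangulation} of $G$ is a chordal supergraph $G^*$ of $G$ (on the same vertex set) such that no proper edge subset of $G^* - G$ can be added to $G$ while keeping it chordal. Both objects can be computed in polynomial time.

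First I would compute a minimal triangulation $G^*$ of $G$ (e.g., via the well-known LexM or MCS-M procedures) and then produce a PEO $\sigma = v_1, \ldots, v_n$ of $G^*$. I would then scan the vertices in the order of $\sigma$: for each $i$, let $C_i := N_{G^*}(v_i) \cap \{v_{i+1}, \ldots, v_n\}$, which is automatically a clique in $G^*$. The algorithm tests whether (a) $C_i$ is already a clique in $G$ (i.e., no edges of $G^* - G$ lie inside $C_i$) and (b) in $G - C_i$ the vertex $v_i$ is separated from some vertex in $\{v_{i+1}, \ldots, v_n\} - C_i$. If both conditions hold, then $C_i$ is a clique cutset: letting $A$ be the component of $G - C_i$ containing $v_i$ and $B := V(G) - (A \cup C_i)$, the triple $(A, C_i, B)$ yields a clique cutset partition, up to recursing into $G[A \cup C_i]$ to ensure that the $A$-side is itself an atom. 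If no index $i$ gives such a cutset, output that $G$ is an atom.

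The crux of correctness, which I expect to be the main obstacle, is the following: if $G$ admits any clique cutset at all, then the scan above must find one. This reduces to a structural fact about minimal triangulations, namely that the family of minimal separators of $G^*$ coincides with the family of minimal separators of $G$ that happen to be cliques (every minimal separator of $G^*$ is a minimal separator of $G$, and every minimal clique separator of $G$ is a minimal separator of every minimal triangulation of $G$). Coupled with the standard fact that every minimal separator of a chordal graph arises as some later-neighborhood $C_i$ in any PEO, this gives the required completeness of the scan.

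The running time is routine to bound: a minimal triangulation is computable in $O(nm)$ time, a PEO in linear time, and each cliqueness-plus-separator test in $O(n+m)$ time, giving a polynomial bound for a single round of the scan. Recursing into $G[A \cup C_i]$ to guarantee that the returned $A$-side is an atom strictly reduces the vertex count at each level, so only polynomially many rounds are needed in total.
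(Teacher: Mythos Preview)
The paper does not prove this statement at all; it is quoted as a known result of Tarjan and used as a black box in the proof of \cref{lem:atoms}. So there is nothing to compare against on the paper's side.

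That said, your sketch is essentially Tarjan's original algorithm: compute a minimal triangulation (via a minimal elimination ordering), run through a PEO of it, and test for each $v_i$ whether its later neighbourhood $C_i$ is already a clique in $G$ and separates $v_i$ from the rest. Two small points of imprecision are worth flagging. First, the claim that the minimal separators of $G^*$ ``coincide'' with the clique minimal separators of $G$ is false as stated (a minimal triangulation of $C_5$ has minimal separators while $C_5$ has none that are cliques); what you actually need, and what your parenthetical correctly isolates, is only the inclusion that every clique minimal separator of $G$ is a minimal separator of any minimal triangulation, together with the chordal-graph fact that every minimal separator of $G^*$ appears as some $C_i$ in any PEO. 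Second, the recursion ``into $G[A\cup C_i]$'' to force the $A$-side to be an atom needs a word of care: a clique cutset $C'$ found inside $G[A\cup C_i]$ is not automatically a cutset of $G$ if part of $C_i$ lands in the new $A$-side (those vertices may still see $B$). Tarjan avoids this by showing directly that the \emph{first} index $i$ in the scan for which $C_i$ is a clique cutset already yields an atom on the $v_i$-side, so no recursion is needed; alternatively one can patch your recursion by always placing $C_i\setminus C'$ on the $B$-side, which is possible since $C_i$ is a clique. With either fix the argument goes through.
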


We show that we can reduce \itte problem to instances $(G,X',Y',E')$ such that $G$ is an atom.
The algorithm is a standard recursion that exploits the existence of clique cutsets~\cite{DBLP:journals/dm/Tarjan85}.

\begin{lemma}\label{lem:atoms}
Let $\cX$ be a family of graphs, and let $\cX^a$ be the family of all atoms in $\cX$.
If \itte can be solved in polynomial time in $\cX^a$, then \itte can be solved in polynomial time in $\cX$.
\end{lemma}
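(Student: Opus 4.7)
The plan is to iterate the clique-cutset decomposition of \cref{thm:clique-cutsets} to reduce \itte on $G$ to \itte on atoms. First, I would check whether $G$ is $K_4$-free; otherwise \cref{obs:no-k4} immediately yields a no-instance. Then I would apply \cref{thm:clique-cutsets}: if $G$ is an atom, solve directly via the atom algorithm (noting $G \in \cX^a$ since $\cX$ is hereditary); otherwise obtain a clique cutset partition $(A, C, B)$ with $G[A \cup C]$ an atom. Since $G$ is $K_4$-free, $|C| \leq 3$, so there are at most $|C|+1 \leq 4$ independent subsets of $C$.

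The central observation is that the problem decouples along $C$: as there are no edges between $A$ and $B$, every triangle of $G$ and every edge of $E'$ lies entirely in $A \cup C$ or $B \cup C$. Consequently, for any independent $S \subseteq C$, $G$ admits an \itte solution $X$ with $X \cap C = S$ if and only if both $G[A \cup C]$ and $G[B \cup C]$ -- each with $X'$ augmented by $S$ and $Y'$ augmented by $C \setminus S$ -- admit \itte solutions. The forward direction takes $X_A := X \cap (A \cup C)$ and $X_B := X \cap (B \cup C)$; the converse glues $X_A \cup X_B$, which is independent because $X_A$ and $X_B$ agree on $C$ and no edges cross the cut, which together with the observation above also preserves triangle-freeness and coverage of $E'$. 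Hence $G$ is a yes-instance iff one of the $\leq 4$ values of $S$ makes both sides yes-instances.

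Iterating the decomposition on $G[B \cup C]$ yields the standard tree $\cT$ of at most $n$ atoms, with edges labeled by clique cutsets. The main obstacle will be avoiding an exponential blow-up from naively branching over all $S$ at each step, which would give $\Omega(4^n)$ leaves. To get polynomial running time, I would run a bottom-up dynamic program on $\cT$. For each atom $P$ and each of the $\leq 4$ independent configurations $\sigma$ of the clique $K_p$ attaching $P$ to its parent, compute a bit $f_P(\sigma)$ indicating whether the subproblem on the subtree rooted at $P$ admits a solution with $X \cap K_p = \sigma$. When $P$ has children attached via cliques $K_1, \ldots, K_d$ with feasibility sets $T_i := \{\tau \subseteq K_i \text{ independent} \sth f_{P_i}(\tau)=1\}$ already computed, the constraint $X \cap K_i \in T_i$ is encoded by (i) adding each $c \in K_i$ with $\{c\} \notin T_i$ to $Y'$, and (ii) if $\emptyset \notin T_i$, forcing $X \cap K_i \neq \emptyset$, which is done by adding the vertex to $X'$ (if $|K_i|=1$), the edge to $E'$ (if $|K_i|=2$), or is automatic (if $|K_i|=3$, as $K_i$ is then a triangle that $X$ must intersect). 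With these constraints encoded, $f_P(\sigma)$ is determined by one call to the atom algorithm on $P$ with $\sigma$ forced; since $\cT$ has $O(n)$ atoms and each contributes $O(1)$ such calls, the total running time is polynomial.
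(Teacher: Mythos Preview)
Your proposal is correct and follows essentially the same approach as the paper: decompose via clique cutsets (which have size at most $3$ by $K_4$-freeness), solve the atom side for each of the $\leq 4$ independent intersections with the cutset, and encode the resulting feasibility set back into the \itte format on the other side via $X'$, $Y'$, and $E'$ exactly as you describe. The paper presents this as a linear recursion---peeling off one atom at a time using \cref{thm:clique-cutsets} and recursing on $G[B\cup C]$---rather than a bottom-up tree DP, but the two are equivalent here and the constraint-encoding trick (forbid $\{c\}$ via $Y'$, forbid $\emptyset$ via $X'$/$E'$/the triangle itself depending on $|C|$) is identical.
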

\begin{proof}
Let $\cI=(G,X',Y',E')$ be an instance of \itte such that $G\in \cX$.
We compute the solution recursively.

If $G$ is an atom, $\cI$ can be solved in polynomial time by our assumption.
Otherwise, we find a clique cutset partition $(A,C,B)$ of $G$.
Define $G_{AC}:=G[A \cup C], G_{CB}:=G[C \cup B]$.
If $|C|>3$, or $|X' \cap C| \geq 2$, we report that $G$ is a no-instance. 
Otherwise, for every $C' \subseteq C - Y'$, such that $|C'|\leq 1$, we solve the instance $\cI_{C'}=(G_{AC},(X' \cap A)\cup C',Y' \cap A,E' \cap E(G_{AC}))$.
Let $M_A$ be the set of these sets $C'$ for which $\cI_{C'}$ is a yes-instance.
If $M_A=\emptyset$, we report that $G$ is a no-instance. 
If $|C|=2$, and $\emptyset \notin M_A$, we add the edge from $C$ to $E'$.
If $|C|=1$, and $\emptyset \notin M_A$, we add the vertex from $C$ to $X'$.
Then, we recursively solve the instance $\cI_B=(G_{CB}, X' - A, (Y'-A) \cup C_Y, E' \cap E(G_{CB}))$, where $C_Y=\{x \in C: \{x\} \notin M_A\}$.
\paragraph{Correctness.} First, we prove that if the algorithm returns that $\cI$ is a no-instance, then the solution cannot exist.
Indeed, it means that in some recursive call (i) $|C|>3$, or (ii) $|X' \cap C| \geq 2$, or (iii) $M_A=\emptyset$, or (iv) $\cI_B$ is a no-instance. 
In the first case, $G$ contains a copy of $K_4$, so by \cref{obs:no-k4}, $\cI$ is a no-instance.
In the second case, the set $X'$ is not independent, so again $\cI$ is a no-instance.
For the remaining cases, suppose that $\cI$ is a yes-instance, and $X$ is a solution to $\cI$.
However, then $X \cap (A \cup C)$ is a solution to $(G_{AC},X' \cap (A\cup C),Y' \cap (A\cup C),E' \cap E(G_{AC}))$, so $C' = X \cap C \in M_A$, a contradiction with (iii).
Similarly, $X \cap (B \cup C)$ is a solution to $\cI_B$, a contradiction with (iv).

Now observe that the solution returned by a recursive call for $\cI_B$ can be always extended to $G$.
Indeed, assume that $X$ is a solution to $\cI_B$, and let $C':=X \cap C$.
As $C$ is a clique, we must have $|C'|\leq 1$.
If $C'=\emptyset$, we must have that $|C| \leq 2$, as otherwise $C$ is triangle in $G-X$
However, then we have in particular that $E' \cap E(G[C])=\emptyset$ and $X' \cap C=\emptyset$, so if $|C|\leq 2$, then $\emptyset\in M_A$ and the solution can be extended to $G$.
On the other hand, if $C'=\{x\}$ for some $x \in C$, then $x \notin C_Y$, i.e., $\cI_{C'}$ is a yes-instance, so the solution also can be extended to $G$.
\paragraph{Running time.} Concluding that $G$ is an atom or finding the clique cutset partition $(A,B,C)$ can be done in polynomial time by~\cref{thm:clique-cutsets}. 
As $|C|\leq 3$, at every step we call the algorithm for instances that are atoms at most four times, so this step also takes polynomial time.
Since the set $A$ is non-empty, in every recursion call our instance is strictly smaller, so after at most $n$ calls, we obtain the solution.
Hence, the running time of the algorithm is polynomial in $|V(G)|$.
\end{proof}
\subsection{Bounded-treewidth graphs}
\emph{Monadic Second-Order Logic} ($\mathsf{MSO}_2$) over graphs consists of formulas with vertex variables, edge variables, vertex set variables, and edge set variables, quantifiers, and standard logic operators. We also have a predicate $\mathsf{inc}(v,e)$, indicating that the vertex $v$ belongs to the edge $e$.

For a graph $G$, let $\tw{G}$ denote the treewidth of $G$.
The classic result of Courcelle~\cite{COURCELLE199012} asserts that problems that can be expressed in $\mathsf{MSO}_2$ can be efficiently solved on graphs of bounded treewidth. The statement of the following version of Courcelle's theorem comes from Cygan et al.~\cite[Theorem 7.11]{DBLP:books/sp/CyganFKLMPPS15}.
\begin{theorem}[Courcelle~\cite{COURCELLE199012}]\label{thm:courselle}
Let $\Psi$ be a formula of $\textsf{MSO}_2$ and $G$ be an $n$-vertex graph equipped with evaluation of all the free variables of $\Psi$. 
Moreover, $G$ is given along with a tree decomposition of width $t$.
Then tverifying whether $\Psi$ is satisfied in $G$ can be done in time $f(||\Psi||, t) \cdot n$, for some computable function $f$.
\end{theorem}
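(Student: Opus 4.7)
The plan is to prove the theorem by combining dynamic programming over a tree decomposition with a compositional analysis of $\mathsf{MSO}_2$ formulas. First I would convert the given tree decomposition of width $t$ into a \emph{nice} tree decomposition of the same width in linear time; this reduces the task to handling four types of bags (leaf, introduce-vertex, introduce-edge, forget, join), which simplifies the dynamic programming.

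Next I would define, for each subformula $\Psi'$ of $\Psi$ and each bag $\beta$, a notion of \emph{type} that captures the behavior of a partial assignment (to the free and quantified variables of $\Psi'$) on the subgraph induced by the vertices forgotten so far together with $\beta$. Two partial assignments have the same type at $\beta$ if and only if they are indistinguishable with respect to $\Psi'$ under any extension by the rest of the graph, restricted to how the bag vertices interact. The heart of the proof is to show, by induction on the structure of $\Psi$, that the number of distinct types at any bag is bounded by a computable function $f(\|\Psi\|, t)$. Atomic predicates (incidence, equality, membership) give a constant number of types per vertex/edge in $\beta$; Boolean connectives correspond to products and complements of the type sets of subformulas; and existential quantification over vertex, edge, or set variables corresponds to a projection that discards the component of the type recording the quantified object. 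In each case the type count is controlled by a function of $t$ and $\|\Psi\|$ only.

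Given this bounded set of types, the dynamic programming computes, for every bag in the nice tree decomposition processed bottom-up, the full table mapping each possible type of the bag to either ``achievable'' or ``not''. The transitions at leaf, introduce, forget, and join bags can each be implemented in time depending only on $f(\|\Psi\|, t)$, so the total running time is $f'(\|\Psi\|, t) \cdot n$ for some computable $f'$. At the root bag, the formula is satisfied iff the type corresponding to ``$\Psi$ holds'' is achievable.

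The main technical obstacle is establishing the bound on the number of types and, relatedly, the correctness of the compositional transitions. The cleanest way to organize this is via Feferman--Vaught style composition theorems or, equivalently, by translating the $\mathsf{MSO}_2$ formula into a finite tree automaton running on the parse tree of the graph algebra associated with the tree decomposition (the latter being the route taken by Courcelle's original proof). Either formalism yields the same bound; the automaton view has the advantage that the bounded state space and the linear-time traversal of the parse tree are manifest, at the cost of a more involved setup of the graph algebra and its signature.
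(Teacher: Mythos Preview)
Your outline is a reasonable sketch of the standard proof of Courcelle's theorem, but note that the paper does \emph{not} prove this statement at all: it is quoted as a black box from the literature (attributed to Courcelle~\cite{COURCELLE199012}, with the precise formulation taken from Cygan et al.~\cite[Theorem~7.11]{DBLP:books/sp/CyganFKLMPPS15}). There is therefore nothing in the paper to compare your argument against.

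That said, as a standalone sketch your plan is sound and follows the usual route: pass to a nice tree decomposition, define a finite set of ``types'' at each bag whose size depends only on $\|\Psi\|$ and $t$, and compute the type tables bottom-up with constant-time (in $n$) transitions per bag. Your remark that this can be organized either via Feferman--Vaught-style composition or via translation to a tree automaton over a graph algebra is accurate; the automaton formulation is closer to Courcelle's original. The only caveat is that your write-up is a high-level plan rather than a proof: the genuinely technical content---showing that the type equivalence is a congruence for the bag operations and bounding the number of types by a computable function---is asserted rather than carried out. For the purposes of this paper that is irrelevant, since the theorem is simply cited.
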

It is straightforward to verify that \itte can be expressed as a formula of $\textsf{MSO}_2$ with free variables where the sets $X',Y'$, and $E'$ are free variables of $\Psi$. Furthermore, given a graph of bounded treewidth,
one can find a tree decomposition of bounded width in polynomial time, using one of FPT approximation algorithms for treewidth~\cite{DBLP:conf/focs/Korhonen21}. Summing up, we obtain the following.
\begin{corollary}\label{cor:itte-bounded-treewidth}
The \itte problem can be solved in polynomial time on graphs with bounded treewidth.
\end{corollary}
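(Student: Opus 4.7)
My plan is to apply Courcelle's theorem (\cref{thm:courselle}) directly, by expressing \itte as a fixed formula of $\mathsf{MSO}_2$ whose free variables correspond exactly to the sets $X'$, $Y'$, and $E'$ of the input. Concretely, I would write $\Psi(X', Y', E')$ as
\[
\exists X\ \bigl( \Phi_{\mathrm{sub}}(X', X) \wedge \Phi_{\mathrm{disj}}(Y', X) \wedge \Phi_{\mathrm{cov}}(X, E') \wedge \Phi_{\mathrm{ind}}(X) \wedge \Phi_{\mathrm{tf}}(X) \bigr),
\]
where $X$ ranges over vertex sets and the conjuncts say, respectively, that $X' \subseteq X$, that $Y' \cap X = \emptyset$, that every edge in $E'$ has at least one endpoint in $X$ (expressed via the predicate $\mathsf{inc}$), that no edge of $G$ has both endpoints in $X$, and that there do not exist three pairwise adjacent vertices all lying outside $X$. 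All five conjuncts use only standard quantification over vertices and edges together with $\mathsf{inc}$, so $\Psi$ has constant length independent of the instance.

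Given an instance $(G, X', Y', E')$ of \itte on a graph of treewidth at most a fixed constant $t$, I would first compute a tree decomposition of $G$ of width $O(t)$ in polynomial time using an FPT-approximation algorithm for treewidth (for example, the algorithm of Korhonen~\cite{DBLP:conf/focs/Korhonen21}). Then I would apply \cref{thm:courselle} to $\Psi$ with the free variables interpreted as the input sets; this decides whether a valid $X$ exists in time $f(\|\Psi\|, t) \cdot |V(G)|$, which is polynomial in $|V(G)|$ since $\|\Psi\|$ and $t$ are both constants. To actually output the set $X$ rather than merely decide existence, I would use the standard self-reduction: iterate over vertices of $G$ and, for each one, tentatively move it into $X'$, keeping the move if the resulting decision query still returns yes and otherwise adding the vertex to $Y'$.

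I do not anticipate a substantial obstacle, as this is essentially a direct application of \cref{thm:courselle}. The only small care points are verifying that triangle-freeness of $G - X$ is $\mathsf{MSO}_2$-expressible (it is: three universally quantified vertex variables linked by three edge variables via $\mathsf{inc}$), and respecting the typing convention under which $X'$ and $Y'$ are vertex-set variables while $E'$ is an edge-set variable, so that the input fits the signature of $\mathsf{MSO}_2$ assumed in \cref{thm:courselle}.
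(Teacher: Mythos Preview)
Your proposal is correct and follows essentially the same approach as the paper: express \itte as a fixed $\mathsf{MSO}_2$ formula with $X', Y', E'$ as free variables, compute a bounded-width tree decomposition via an FPT-approximation algorithm (the paper also cites Korhonen), and invoke \cref{thm:courselle}. You provide more detail than the paper does (spelling out the five conjuncts and the self-reduction for outputting $X$), but the route is identical.
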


\newpage
\section{Solving \itte in claw-free graphs}\label{sec:clawfree}
Let $G$ be a connected graph. 
A \emph{strip structure of $G$} is a pair $(D, \eta)$, that consists of a simple graph $D$, a set $\eta(xy) \subseteq V(G)$ for every $xy \in E(D)$, and its non-empty subsets $\eta(xy, x), \eta(xy, y) \subseteq \eta(xy)$, satisfying the following conditions:
\begin{enumerate}
\item[(S1)] $|E(D)| \geq 3$, and there are no vertices of degree 2 in $D$,
\item[(S2)] the sets $\{\eta(e) : e \in E(D)\}$ form a partition of $V(G)$,
\item[(S3)] if $u \in \eta(e), v \in \eta(f)$, for some $e,f \in E(D)$, then $uv \in E(G)$ if and only if there exists $x \in V(D)$ such that $e$ and $f$ are incident to $x$, $u \in \eta(e, x)$ and $w \in \eta(f, x)$,
\item[(S4)] for every $x \in V(D)$, the set $\bigcup_{y \in N(x)} \eta(xy, x)$ induces a clique in $G$.
\end{enumerate}

Chudnovsky and Seymour~\cite{DBLP:journals/jct/ChudnovskyS08e} proved that every claw-free $G$ is either very simple
or admits a strip structure where the subgraphs induced by vertices assigned to a single edge are simple.
If $G$ is additionally of bounded maximum degree, then this result becomes particularly usuful.
The following corollary of the result of Chudnovsky and Seymour comes from the paper of Abrishami et al.~\cite[Corollary 3.5]{ACDR21}.

\begin{theorem}[Chudnovsky, Seymour~\cite{DBLP:journals/jct/ChudnovskyS08e}]\label{thm:strip-structure}
If $G$ is a connected claw-free graph with maximum degree at most $\Delta$, then either
$\tw{G} \leq 4\Delta + 3$, or $G$ admits a strip structure $(D, \eta)$ such that for every $e \in E(D)$, we have $\tw{G[\eta(e)]} \leq 4\Delta + 4$.
Moreover, $(D, \eta)$ can be found in time polynomial in $|V(G)|$.
\end{theorem}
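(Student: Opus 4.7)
This statement is essentially a corollary of the celebrated structure theorem of Chudnovsky and Seymour for claw-free graphs. The plan is to invoke that structural classification directly, and then show that in the bounded maximum-degree regime the basic classes have small treewidth and the strip decompositions have small strips. I would not try to reprove the underlying structure theorem; I would use it as a black box.

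First I would recall the Chudnovsky–Seymour decomposition: every connected claw-free graph either belongs to one of a finite list of ``basic'' classes (line graphs of triangle-free graphs, long circular interval graphs, certain thickenings of the icosahedron, antiprismatic graphs, and a few further sporadic families), or admits a non-trivial decomposition into ``elementary'' strips with the properties (S1)--(S4). For the first alternative I would check, class by class, that under the assumption $\Delta(G)\le\Delta$ the treewidth is bounded by $4\Delta+3$. The key case is that of line graphs $L(H)$: here $\Delta(G)\le \Delta$ forces $\Delta(H)\le \lceil(\Delta+2)/2\rceil$, and a tree decomposition of $H$ lifts to one of $L(H)$ of the required width by replacing each bag with the set of edges incident to it. The remaining basic classes are highly structured (bounded clique-width in the relevant degree range), so similar arguments apply.

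Second, if $G$ falls into the decomposition alternative, I would take the strip structure $(D,\eta)$ provided by Chudnovsky and Seymour. Each strip $G[\eta(e)]$ belongs to one of a small number of ``elementary'' types and in particular satisfies $\tw{G[\eta(e)]}\le 4\Delta+4$ under the degree bound; condition (S4) is exactly the clique condition at each vertex $x$ of $D$, coming from the join between incident strips in the Chudnovsky–Seymour decomposition. Condition (S1) is standard and can be ensured by smoothing degree-2 vertices of $D$ and handling small strip structures directly (where the graph is already simple enough to have treewidth $\le 4\Delta+3$).

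Third, for the algorithmic statement, I would invoke the polynomial-time constructive version of the Chudnovsky–Seymour decomposition, which produces both the classification into basic classes and, in the compositional case, the strip structure itself. The main obstacle in this plan is the careful bookkeeping of the constants $4\Delta+3$ and $4\Delta+4$: these follow from a case analysis of the elementary strip types and the basic classes, each of which requires a tailored tree-decomposition construction in terms of $\Delta$. Since this is precisely the content of the corollary stated by Abrishami et al., I would ultimately defer to their argument rather than grind through each case.
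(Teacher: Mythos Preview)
The paper does not prove this statement at all; it is quoted verbatim as a known result (attributed to Chudnovsky and Seymour, in the formulation of Abrishami et al.\ \cite[Corollary 3.5]{ACDR21}) and used as a black box in the proof of \cref{thm:itt-claw-free}. Your plan---invoke the Chudnovsky--Seymour structure theorem, bound the treewidth of the basic classes and of each elementary strip under the degree constraint, and cite the constructive version for the algorithmic claim---is exactly the route taken in \cite{ACDR21}, and you yourself conclude by deferring to that reference, which is the right call here.
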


The main result of this section is that \itte can be solved in polynomial time in claw-free graphs.

\begin{theorem}\label{thm:itt-claw-free}
\itte can be solved in polynomial time in claw-free graphs.
\end{theorem}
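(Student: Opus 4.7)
The plan is to reduce the problem to the case of a graph admitting a strip structure with bounded-treewidth strips, and then to stitch local solutions together via a maximum weight matching. First, by \cref{obs:no-k4} we may assume that $G$ is $K_4$-free (otherwise we output no), by \cref{prop:itt-itte} we may assume that $Y' = \emptyset$, and by \cref{lem:atoms} we may assume that $G$ is an atom. Since $G$ is claw-free and $K_4$-free, every neighborhood of $G$ is triangle-free with independence number at most $2$; since $R(3,3) = 6$, this forces $|N(v)| \leq 5$ for every $v$, so $G$ has constant maximum degree.

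Given the bound on the maximum degree, we invoke \cref{thm:strip-structure}. If $\tw{G}$ is bounded, we are done by \cref{cor:itte-bounded-treewidth}. Otherwise, we obtain a strip structure $(D, \eta)$ with $\tw{G[\eta(e)]}$ bounded for every $e \in E(D)$. By property (S4) and $K_4$-freeness, the clique $C_x := \bigcup_{y \in N_D(x)} \eta(xy,x)$ has at most three vertices for every $x \in V(D)$, and hence $|\eta(e,x)| \leq 3$ for every edge $e$ incident to $x$. For each $e = xy \in E(D)$, we enumerate all $O(1)$ possible \emph{interface states}, i.e., pairs $(S_x, S_y)$ with $S_x \subseteq \eta(e,x)$ and $S_y \subseteq \eta(e,y)$ indicating the intended intersection of $X$ with these sets. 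For each interface state we use \cref{cor:itte-bounded-treewidth} to solve the local \itte problem on $G[\eta(e)]$, where $S_x \cup S_y$ is added to the ``in'' set and $(\eta(e,x) \cup \eta(e,y)) \setminus (S_x \cup S_y)$ to the ``out'' set, together with the restrictions of $X'$ and $E'$ to $\eta(e)$. This produces, for every edge $e$, the list $\mathcal{L}_e$ of feasible interface states.

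It remains to choose one state per edge of $D$ consistent across each $C_x$. Observe that $X \cap C_x$ must be independent, hence of size at most one, and if $|C_x| = 3$ then $X$ must intersect $C_x$, so that $G - X$ is triangle-free. We encode the remaining problem as an instance of \MWM on an auxiliary graph $\tG$ whose vertices are copies of the elements of $V(D)$ (augmented with gadget vertices at each $x$ that enforce the at-most-one constraint and, when needed, the covering constraint on $C_x$) and whose edges correspond to pairs $(e, \sigma)$ with $\sigma \in \mathcal{L}_e$. Weights and the mandatory-cover set $U$ are set up so that a matching of $\tG$ covering $U$ and of maximum weight corresponds to a globally consistent selection of one state per edge, and hence to a solution of the original \itte instance; conversely, any solution induces such a matching. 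We then solve the resulting \MWM via \cref{lem:perfect-matching} in polynomial time.

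The main difficulty I anticipate is setting up the gadgetry around each vertex $x \in V(D)$ so that a matching exactly captures: (i) at most one of the chosen interface states at $x$ contributes a vertex to $X \cap C_x$; (ii) if $|C_x| = 3$, then exactly one does; and (iii) the choices at $x$ and at $y$ are consistent with the state selected on the edge $e = xy$. Vertices with $|C_x| = 3$ will likely require a different gadget than those with $|C_x| \leq 2$, and edges of $E'$ that lie between two different sets $\eta(e)$ and $\eta(f)$ (which, by property (S3), can only sit inside some $C_x$) will also need to be absorbed into the gadget at $x$ rather than handled locally on any individual strip.
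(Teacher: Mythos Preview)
Your plan is essentially the paper's proof: reduce to $K_4$-free atoms, bound the degree, invoke the strip-structure theorem, solve each strip via bounded treewidth, and glue with \MWM. One minor technical point: applying \cref{prop:itt-itte} before \cref{lem:atoms} does not compose the way you suggest, since the recursion inside \cref{lem:atoms} reintroduces a non-empty $Y'$ (via the set $C_Y$), and deleting $Y'$ afterwards may destroy the atom property. The paper simply carries $Y'$ through the whole strip-structure argument, which you can do just as well since your local subproblems already have an ``out'' set.

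The step you flag as the main difficulty dissolves once you exploit the atom assumption more fully. If $y \in V(D)$ has degree $1$, its neighbor $x$ has degree $3$ (by (S1) and $|C_x| \leq 3$), and then $\{v_{x,y}\}$ or $\{v_{x,y_1},v_{x,y_2}\}$ is a clique cutset in $G$, contradicting atomicity; hence $D$ is $3$-regular. Now $K_4$-freeness forces $|\eta(xy,x)| = 1$ for every incident pair, so an interface state on $e = xy$ is merely a subset of the two-element set $\{v_{x,y}, v_{y,x}\}$, and $|C_x| = 3$ always. The \MWM gadget then becomes almost trivial: take $V(D)$ plus one auxiliary vertex $t_{xy}$ per edge; add the edges $xy$, $xt_{xy}$, $yt_{xy}$ according to which of $\{v_{x,y},v_{y,x}\}$, $\{v_{x,y}\}$, $\{v_{y,x}\}$ are feasible on the strip; set $U = V(D)$; and use $\{0,1\}$ weights to force a non-empty state on every edge where $\emptyset$ is infeasible. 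An edge of $E'$ lying inside some $C_x$ is handled by a one-line reduction (two such edges in one triangle force a vertex into $X'$; a single such edge just prunes an option in the matching instance). No separate gadget for $|C_x| \leq 2$ is needed because that case never arises.
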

\begin{proof}
Let $(G,X',Y',E')$ be an instance of \itte, such that $G$ is claw-free.
By \cref{lem:atoms}, we can assume that $G$ is an atom, and, in particular, $G$ is connected.

First, we check whether $G$ contains $K_4$.
If so, by \cref{obs:no-k4}, we can safely return that we deal with a no-instance.
Hence, we assume that $G$ is $K_4$-free.
This implies that $\Delta(G) \leq 5$.
Indeed, if there exists a vertex $v$ of degree at least six, then $v$ either has three pairwise non-adjacent neighbors (and hence a claw), or three pairwise adjacent neighbors (and hence a $K_4$).

Since $G$ is claw-free and $\Delta(G) \leq 5$, by \Cref{thm:strip-structure}, either (i) $\tw{G} \leq 23$ or (ii) there exists a strip structure $(D,\eta)$ such that for every $e \in E(D)$ we have $\tw{G[\eta(e)]} \leq 24$.
By \cref{cor:itte-bounded-treewidth}, if (i) holds, it can be checked in polynomial time if $G$ is a yes-instance.
Therefore, we will assume that (ii) holds. 
By \Cref{thm:strip-structure} the strip structure $(D,\eta)$ can be computed in polynomial time.
As $G$ is connected, so is $D$.

Recall that by the definition of a strip structure, if $y_1,\ldots,y_d$ are neighbors of some $x \in V(D)$, the set $\bigcup_{i \in [d]}\eta(xy_i,x)$ induces a clique in $G$.
Hence, as $G$ is $K_4$-free, we have $\Delta(D) \leq 3$, which implies that the vertices of $D$ are either of degree 1 or 3.
Moreover, if $x$ is of degree 3, let $N(x)=\{y_1,y_2,y_3\}$ and note that since $G$ is $K_4$-free, $|\eta(xy_1,x)|=|\eta(xy_2,x)|=|\eta(xy_3,x)|=1$.
In this case, we denote the single member of $\eta(xy,x)$ by $v_{x,y}$. 

On the other hand, if $D$ contains a vertex $y$ of degree 1, two things can happen: either $D$ is isomorphic to $K_2$, or $y$ is adjacent to a vertex $x$ that is of degree 3 in $D$.
In the first case, if we denote by $e$ the edge of $D$, we have $\tw{G}=\tw{G[\eta(e)]}\leq 24$, so again, we can solve the problem in polynomial time by \cref{cor:itte-bounded-treewidth}.
In the second case, let $y_1,y_2$ be the other (than $y$) neighbors of $x$ in $D$.
We observe that $\{v_{x,y_1},v_{x,y_2}\}$ or $\{v_{x,y}\}$ is a clique cutset in $G$, a contradiction with $G$ being an atom.
Hence, we can assume that $D$ is 3-regular.

For $x \in V(D)$ and $y_1,y_2,y_3 \in N_D(x)$, let $E(x):=\{v_{x,y_1}v_{x,y_2},v_{x,y_2}v_{x,y_3},v_{x,y_3}v_{x,y_1}\}$. 
Recall that by the definition of a strip structure, $G[E(x)]$ is a triangle. 
The following reduction rule is straightforward.
\begin{redrule}\label{red:itt-claw-i}
If $E(x) \subseteq E'$, then $(G,X',Y',E')$ is a no-instance.
If $v_{x,y_1}v_{x,y_2},v_{x,y_2}v_{x,y_3} \in E',$ but $v_{x,y_3}v_{x,y_1} \notin E'$, we can safely add $v_{x,y_2}$ to $X'$ and remove $v_{x,y_1}v_{x,y_2},v_{x,y_2}v_{x,y_3}$ from $E'$.
Hence, for every $x \in V(D)$ it holds that $|E(x) \cap E'| \leq 1$. 
\end{redrule}

Consider an edge $xy$ of $D$.
Let $G_{xy}:=G[\eta(xy)]$, and let $I_{xy}=\{v_{x,y},v_{y,x}\}-Y'$.
Let $\cA(xy)$ be the (possibly empty) set of these sets $A$ which satisfy $X' \cap I_{xy} \subseteq A \subseteq I_{xy}$ and
\[\cI_A=(G_{xy}, (X' \cap V(G_{xy})) \cup A, (Y' \cap V(G_{xy})) \cup (I_{xy} - A), E' \cap E(G_{xy}))\]
 is a yes-instance of \itte. 
Observe that if there exists a solution $X$ of \itte for $(G,X',Y',E')$, then $X \cap \eta(xy)$ is a solution to $\cI_{A'}$ for $A'= \{v_{x,y},v_{y,x}\} \cap X$, and in this case $A' \in \cA(xy)$.
We obtain the following.

\begin{redrule}\label{red:itt-claw-ii}
If $\cA(xy)=\emptyset$ for some $xy \in E(D)$, then $(G,X',Y',E')$ is a no-instance. 
\end{redrule}

Our aim is to reduce the \itte problem to the \MWM problem.
Let $\cE:=\{xy \in E(D): \emptyset \notin \cA(x,y)\}$.
We construct an instance $(D',U,\wei,|\cE|)$ of \MWM as follows.
To obtain $D'$, we take a copy of the set $V(D)$, and for every $xy_1 \in E(D)$ we introduce an additional vertex $t_{xy}$.
Then, for every $xy \in E(D)$, we do the following (below we always use the notation $N_D(x)=\{y,y',y''\}$).
\begin{enumerate}
\item[(1)] if $\{v_{x,y},v_{y,x}\} \in \cA(xy)$, we add $xy$ to $E(D')$,
\item[(2)] if $\{v_{x,y}\} \in \cA(xy)$, we add $xt_{xy}$ to $E(D')$,
\item[(3)] if $\{v_{y,x}\} \in \cA(xy)$, we add $yt_{xy}$ to $E(D')$.
\item[(4)] if $\{v_{x,y'}v_{x,y''}\}=E' \cap E(x)$, we remove $xy$ and $xt_{xy}$ from $E(D')$ (if they were introduced).
\end{enumerate} 
Let $U = V(D) \subseteq V(D')$, i.e., the set of original vertices of $D$. 
For $xy \in E(D)$, we define $T(xy):=\{xy,xt_{xy},yt_{xy}\} \cap E(D')$.
We set $\wei:E(D') \to \{0,1\}$ to be $\wei(uv)=1$ if $uv \in T(xy)$ for some $xy \in \cE$, and $\wei(uv)=0$ otherwise. 
This concludes the construction of $(D',U,\wei,|\cE|)$.

By the definition of the strip structure, each edge $uv \in V(G)$ is either contained in $E(G_{xy})$, for some $xy \in E(D)$, or $u=v_{x,y'}$ and $v=v_{x,y''}$ for some $x \in U$.
In the first case, we say that $uv$ is of \emph{type I for $xy$}, in the latter -- that $uv$ is of \emph{type II for $\{xy',xy''\}$}.
Similarly, each triangle $uvt \in V(G)$ is is either contained in $E(G_{xy})$, for some $xy \in E(D)$ (so it is of \emph{type I for $xy$}), or $u=v_{x,y}$, $v=v_{x,y'}$ and $t=v_{x,y''}$ for some $x \in U$ (so it is of \emph{type II for $x$}).
We claim the following.

\begin{claim}\label{cla:itt-claw}
$(G,X',Y',E')$ is a yes-instance of \itte if and only if $(D',U,\wei,|\cE|)$ is a yes-instance of \MWM.
\end{claim}
\begin{claimproof}
First, assume that $(D',U,\wei,|\cE|)$ is a yes-instance of \MWM, i.e., there exists a matching $M$ such that $\wei(M)\geq |\cE|$ and $M$ covers $U$.

We note that since $M$ is a matching, in particular, for every $xy \in E(D)$ it holds that $|T(xy) \cap M| \leq 1$.
We set $T:=\bigcup_{xy\in\cE}T(xy)$.
By the definition of $\wei$, we have $\sum_{e \in M} \wei(e)=\sum_{e \in M \cap T} \wei(e)=|M \cap T|$.
Since $ \sum_{e \in M} \wei(e) \geq |\cE|$, we obtain that $|M \cap T| \geq |\cE|$, i.e., $M$ contains at least $|\cE|$ edges from $T$.
However, as $|T(xy) \cap M| \leq 1$ for every $xy \in E(D)$, we know that the following property is satisfied:
\begin{align*}
(*) \textrm{ The matching $M$ contains exactly one edge from each $T(xy)$, where $xy \in \cE$.} 
\end{align*}

Each $x \in U$ is incident to exactly one edge of $M$, denote that edge by $m(x)$.
We define the solution $X$ to $(G,X',Y',E')$ by defining $\eta(xy) \cap X$ for every $xy \in E(D)$.
Let $A(xy)$ be as follows.
\begin{align*}
A(xy):=\begin{cases}
\{v_{x,y},v_{y,x}\} & \textrm{if } m(x)=xy, \\
\{v_{x,y}\} & \textrm{if } m(x)=xt_{xy}, \\
\{v_{y,x}\} & \textrm{if } m(y)=yt_{xy}, \\
\emptyset & \textrm{otherwise.}
\end{cases}
\end{align*}
As $M$ is a matching, we cannot have $m(x)=xy$ and $m(y)=yt_{xy}$, or $m(x)=xt_{xy}$ and $m(y)=yt_{xy}$ at the same time, so $A(xy)$ is well-defined.

Observe that $A(xy) \in \cA(xy)$, by definition of $E(D')$.
Indeed, first three cases in the definition of $A(xy)$ correspond precisely to conditions (1)-(3) in the construction of $E(D')$. 
For the last case, assume that the $\emptyset\notin \cA(xy)$, i.e., $xy \in \cE$. 
However, by $(*)$, if $xy \in \cE$, then $M \cap T(xy) \neq \emptyset$, i.e., one of first three cases in the definition of $A(xy)$ holds.

As $A(xy) \in \cA(xy)$, let $X(xy)$ be a solution to $\cI_{A(xy)}$.
We claim that $X=\bigcup_{xy \in E(D)} X(xy)$ is a solution to the instance $(G,X',Y',E')$ of \itte.
For that, we need to verify that (i) $X$ is an independent set, (ii) $X' \subseteq X$, (iii) $Y' \subseteq Y$, (iv) for every $e\in E'$ it holds that $e \cap X \neq \emptyset$, and (v) $G-X$ is triangle-free.

For (i), assume that $u,v \in X$ are adjacent.
If $u,v$ are of type I for some $xy$, then $u,v \in X(xy)$, so $X(xy)$ is not a solution for $\cI_{A(xy)}$.
Hence, $uv$ is of type II for some $\{xy,xy'\}$.
This means that $u=v_{x,y} \in X(xy), v=v_{x,y'} \in X(xy')$, and then $v_{x,y} \in A(xy)$, and $v_{x,y'} \in A(xy')$.
However, this implies, respectively, that $m(x) \in \{xy,xt_{xy}\}$, and $m(x) \in \{xy',xt_{xy'}\}$, but $\{xy,xt_{xy}\} \cap \{xy',xt_{xy''}\}=\emptyset$, a contradiction.

To see that (ii) holds note that each $u \in X'$ belongs to some $\eta(xy)=V(G_{xy})$, so $u \in X(xy) \subseteq X$. 
Similarly, for (iii), $u \in X \cap Y'$ belongs to some $\eta(xy)=V(G_{xy})$.
Hence, $u \in X(xy)$, but then $X(xy)$ is not a proper solution to $\cI_{A(xy)}$.
Condition (iv) follows directly from the fact that $X(xy)$ is a solution to $\cI_{A(xy)}$ if $uv \in E'$ is of type I for some $xy$.
If $uv \in E'$ is of type II for $\{xy',xy''\}$, recall that by Reduction Rule \ref{red:itt-claw-i}, $|E' \cap E(x)| \leq 1$, so $E' \cap E(x)=\{v_{x,y'}v_{x,y''}\}$.
Hence, by step (4) of the construction of $E(D')$, edges $xy$ and $xt_{xy}$ are not in $E(D')$.
Since $M$ must contain an edge incident to $x$, we have $\{xy',xt_{xy'},xy'',xt_{xy''}\} \cap M \neq \emptyset$, so by definition of $A(xy')$ and $A(xy'')$, we have $\{u,v\} \cap X=\{v_{x,y'},v_{x,y''}\} \cap X \neq \emptyset$.

Finally, to see that (v) holds, consider a triangle $uvw$ in $G$. 
We need to show that $\{u,v,w\} \cap X \neq \emptyset$.
If $uvw$ is of type I for some $xy$, then again, since $X(xy)$ is a solution to $\cI_{A(xy)}$, we know that at least one of $u,v,w$ must belong to $X(xy) \subseteq X$.
So assume that $uvw$ is of type II for some $x \in U$.
As $x \in U$, $M$ must contain an edge incident to $x$, i.e., $\{xy,xt_{xy},xy',xt_{xy'},xy'',xt_{xy''}\} \cap M \neq \emptyset$.
Hence, by definition of sets $A(xy)$, $A(xy')$, and $A(xy'')$, we have $\{u,v,w\} \cap X=\{v_{x,y},v_{x,y'},v_{x,y''}\} \cap X \neq \emptyset$.

\medskip

For the other implication, assume that $(G,X',Y',E')$ is a yes-instance of \itte, so there exists a solution $X$. 
For each $xy \in E(D)$, let $X(xy):=X \cap V(G_{xy})$.
We note that $X(xy)$ is the solution for the instance $\cI_{A(xy)}$, where $A(xy)=X \cap \{v_{x,y},v_{y,x}\}$.

We construct a solution $M$ for the instance $(D',V(D),\wei,|\cE|)$ by defining $M(xy)=M \cap T(xy)$ for each edge $xy \in V(D)$.
\begin{align*}
M(xy):=\begin{cases}
\{xy\} & \textrm{if } A(xy)=\{v_{x,y},v_{y,x}\}, \\
\{xt_{xy}\} & \textrm{if } v_{x,y} \neq v_{y,x}\textrm{ and } A(xy)=\{v_{x,y}\}, \\
\{yt_{xy}\} & \textrm{if } v_{x,y} \neq v_{y,x}\textrm{ and } A(xy)=\{v_{y,x}\}, \\
\emptyset & \textrm{if } A(xy)=\emptyset.
\end{cases}
\end{align*}
We note that $M(xy)$ is well-defined, as the edges $xy, xt_{xy},$ and $yt_{xy}$ were added to $D'$ in steps (1)-(3) of the construction, and if an edge $xy$ ($xt_{xy}$, respectively) is removed in step (4), then $v_{x,y}\notin X$, so $v_{x,y}\notin A(xy)$.
We claim that $M=\bigcup_{xy \in E(D)}M(xy)$ is a matching $M$ in $D'$ such that for each $x \in U$ there exists an edge in $M$ incident to $x$, and $\wei(M)\geq |\cE|$.

First, we prove that (i) $M$ is a matching such that (ii) for each $x \in U$ there exists an edge in $M$ incident to $x$.
For (i), note that the only elements of $V(D')-U$, are vertices of form $t_{xy}$ for some $xy$.
Observe that for a fixed $t_{xy}$ we never have $xt_{xy},yt_{xy} \in M$, by definition of $M(xy)$.
By the same reason, if $x \in U$, we never have $xy,xt_{xy} \in M$.
On the other side, we have $N_{D'}(x) \subseteq \{y,t_{xy},y',t_{xy'},y'',t_{xy''}\}$.
Without loss of generality, assume that $M(xy) \cap \{xy,xt_{xy}\} \neq \emptyset$ and $M(xy') \cap \{xy',xt_{xy'}\} \neq \emptyset$.
Definitions of $M(xy)$ and $M(xy')$ imply that $v_{x,y} \in A(xy)$ and $v_{x,y'} \in A(xy')$.
Since $v_{x,y}$ is adjacent to $v_{x,y'}$, and $A(xy), A(xy') \subseteq X$, this is a contradiction with $X$ being independent.
Hence, at most one of the sets $M(xy) \cap \{xy,xt_{xy}\}$, $M(xy') \cap \{xy',xt_{xy'}\}$ and $M(xy'') \cap \{xy'',xt_{xy''}\}$ is non-empty, i.e., at most one edge of $M$ is incident to $x$.
That proves (i).
For (ii) note that as $E(x)=\{v_{x,y},v_{x,y'},v_{x,y''}\}$ induces a triangle, exactly one of the vertices from $E(x)$ must belong to $X$.
By symmetry, assume that $v_{x,y} \in X$. 
By definition of $M(xy)$, this means that $M \cap \{xy, xt_{xy}\} \neq \emptyset$.
Hence, we get that exactly one edge of $M$ is incident to $x$.

It remains to show that (iii) $\sum_{e \in M} \wei(e) \geq |\cE|$. 
Recall that $\sum_{e \in M} \wei(e) = |T \cap M|$.
It is enough to show that for every $xy \in E(D)$ such that $T(xy) \subseteq T$ (i.e., $\emptyset \notin \cA(xy)$), we have $|T(xy) \cap M|=1$.
Recall that as $D'[T(xy)]$ is a subgraph of a triangle and $M$ is a matching, we clearly have $|T(xy) \cap M|\leq 1$.
On the other hand, since $\emptyset \notin \cA(xy)$, and by Reduction Rule~\ref{red:itt-claw-ii}, $\cA(xy) \neq \emptyset$, we must have $\cA(xy) \cap \{\{v_{x,y},v_{y,x}\},\{v_{x,y}\},\{v_{y,x}\}\} \neq \emptyset$.
By the definition of $M(xy)$, this corresponds precisely to $M \cap T(xy)$ being non-empty.
Hence, $|T(xy) \cap M|=1$, and this concludes the proof of the claim.
\end{claimproof}

Therefore, for an instance $(G,X',Y',|\cE|)$ of \itte, we create an equivalent (by \cref{cla:itt-claw}) instance $(D',U,\wei, |\cE|)$ of \MWM.
Then, we check whether $(D',U,\wei, |\cE|)$ is a yes-instance of \MWM.

It remains to estimate the running time of the algorithm.
We check in polynomial time whether $G$ contains $K_4$.
As for each $xy \in E(D)$, the set $\eta(xy)$ is non-empty, $|E(D)| \leq n$.
For every $xy \in E(D)$ we compute $\cA(xy)$, which requires solving at most four instances $\cI_\emptyset, \cI_{\{v_{x,y}\}},\cI_{\{v_{y,x}\}}, \cI_{\{v_{x,y},v_{y,x}\}}$.
Each of them consists of a graph $G_{xy}$, and by \Cref{thm:strip-structure}, $\tw{G_{xy}} \leq 24$, hence this also can be done in polynomial time.
As the instance $(D',U,\wei, |\cE_N|)$ of \MWM can be constructed and solved in polynomial time by \cref{lem:perfect-matching}. \end{proof}

\newpage
\section{Solving \itte in $S_{2,1,1}$-free graphs}\label{sec:forkfree}
In this section we generalize the algorithm from \cref{thm:itt-claw-free} to the class of $S_{2,1,1}$-free graphs.
The main combinatorial tool is the following theorem used by Lozin and Milani\v{c} to solve \MIS in $S_{2,1,1}$-free graphs~\cite{DBLP:conf/soda/LozinM06}.

\begin{theorem}[Lozin, Milani\v{c}~{\cite[Theorem 4.1]{DBLP:conf/soda/LozinM06}}]\label{thm:prime-graph-claw-free}
Let $G$ be an $S_{2,1,1}$-free prime graph and let $v\in V(G)$. Let $G'$ be an induced prime subgraph of $G-N[v]$. Then $G'$ is claw-free.
\end{theorem}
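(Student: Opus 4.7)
The plan is to argue by contradiction, supposing that $G'$ contains an induced claw with center $a$ and pairwise nonadjacent leaves $b_1, b_2, b_3$; write $C = \{a, b_1, b_2, b_3\}$. Since $V(G') \subseteq V(G) \setminus N[v]$, the vertex $v$ is distinct from, and nonadjacent to, every element of $C$.

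The first step is to record the main structural constraint forced by $S_{2,1,1}$-freeness of $G$: if some $u \in V(G) \setminus C$ is nonadjacent to $a$ and adjacent to exactly one $b_i$, then the set $\{u, b_i, a, b_j, b_k\}$ (with $\{i,j,k\}=\{1,2,3\}$) induces an $S_{2,1,1}$ with center $a$, long arm $u$--$b_i$--$a$, and short arms $a$--$b_j$, $a$--$b_k$. Consequently, every $u \in V(G) \setminus C$ falls into one of the following classes: \emph{(I)} adjacent to $a$, with arbitrary pattern on $\{b_1,b_2,b_3\}$; \emph{(II)} nonadjacent to $a$ and to all $b_i$'s (the class containing $v$); or \emph{(III)} nonadjacent to $a$ and adjacent to at least two of $\{b_1, b_2, b_3\}$.

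Next, I would invoke primality of $G'$. Since the claw itself admits $\{b_1, b_2, b_3\}$ as a nontrivial module, it is not prime, and hence $V(G') \supsetneq C$ because $G'$ is prime. The set $\{b_1, b_2, b_3\}$ is thus a nontrivial proper subset of $V(G')$, and primality of $G'$ forces it not to be a module there; therefore some $u \in V(G') \setminus C$ distinguishes two of the $b_i$'s, and by the above classification $u$ must belong to class (I) or (III). A parallel argument using primality of $G$ produces further distinguishers in $V(G) \setminus C$; and since a prime graph on at least two vertices is connected, there is a shortest path in $G$ from $v$ to $C$.

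The hardest step will be to combine these structural restrictions with the presence of $v$ to exhibit an induced $S_{2,1,1}$ in $G$, contradicting $S_{2,1,1}$-freeness. My intended device is to examine the penultimate vertex $w$ of a shortest $v$-to-$C$ path: $w$ is adjacent to some vertex of $C$, and, if the path has length at least two, has a neighbor $w'$ in class (II). A careful case analysis on the adjacency patterns of $w$ and $w'$ with the claw should pin down an induced $S_{2,1,1}$ on a five-vertex subset of $\{v, w, w'\} \cup C$. The main obstacle is class (I): vertices adjacent to $a$ are unconstrained by $S_{2,1,1}$-freeness, so one must iterate the primality argument—replacing such unconstrained distinguishers by class-(III) ones produced via minimal modules, and possibly choosing the claw to be one with a minimality property—until the desired five-vertex configuration is forced. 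This case-based bookkeeping, which must simultaneously respect primality of both $G$ and $G'$, is the technical heart of the proof.
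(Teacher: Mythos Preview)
The paper does not give its own proof of this theorem: it is stated with attribution to Lozin and Milani\v{c} and used as a black box in the algorithm of \cref{sec:forkfree}. So there is no in-paper argument to compare against.

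As for the proposal itself, the classification of vertices outside $C$ into classes (I), (II), (III) is correct and is the natural starting point. But what you have written is a plan rather than a proof, and you say so yourself: the ``case-based bookkeeping \ldots\ is the technical heart of the proof'' is precisely the part you have not carried out. The shortest-path device is not obviously enough. The penultimate vertex $w$ on a shortest $v$--$C$ path may well land in class (I) (adjacent to $a$), and then no five-vertex subset of $\{v,w',w\}\cup C$ is forced to induce $S_{2,1,1}$; your proposed fix---``iterate the primality argument \ldots\ replacing such unconstrained distinguishers by class-(III) ones''---is a hope, not an argument, and it is unclear why iterating should terminate or why the replaced distinguishers interact with $v$ at all. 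Primality of $G'$ gives you distinguishers \emph{inside} $G'$, all of which are nonadjacent to $v$, so they cannot by themselves create the long arm of a fork reaching $v$; primality of $G$ gives you connectivity to $v$, but the connecting vertices may all be of class (I). Bridging this gap is the entire content of the theorem, and your outline does not yet contain a mechanism for doing so.
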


Equipped with \cref{thm:prime-graph-claw-free}, we can present our algorithm.

\begin{theorem}\label{thm:ittefork}
\itte can be solved in polynomial time in $S_{2,1,1}$-free graphs.
\end{theorem}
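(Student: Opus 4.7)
The plan is to combine the modular-decomposition reduction \cref{lem:X-and-X-star}, the structural property \cref{thm:prime-graph-claw-free}, and the polynomial-time algorithm for claw-free instances \cref{thm:itt-claw-free}. As a first step I would apply \cref{lem:X-and-X-star} to the hereditary class of $S_{2,1,1}$-free graphs; this reduces the task to solving \itte on instances whose underlying graph is either a prime $S_{2,1,1}$-free graph or a clique. Cliques are immediate: by \cref{obs:no-k4}, a clique on at least four vertices is a no-instance, so we only have to handle cliques of size at most three, which takes constant time.

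This leaves an instance $(G,X',Y',E')$ where $G$ is prime and $S_{2,1,1}$-free. The idea is to branch on which vertex of $G$ belongs to the desired independent transversal $X$. First, I would check whether $X = \emptyset$ is a valid solution, which amounts to verifying $X' = \emptyset$, $E' = \emptyset$, and that $G$ is triangle-free. Otherwise, any solution $X$ must contain at least one vertex $v$, and I would try every $v \in V(G)$ in turn (if $X'\neq\emptyset$, one may of course just pick $v \in X'$). For each such $v$, set $X'' := X' \cup \{v\}$ and $Y'' := Y' \cup N_G(v)$, and apply \cref{prop:itt-itte} to eliminate $Y''$. In the resulting instance the vertex $v$ has no neighbors and is already forced into the transversal, so it can be discarded together with $N_G(v)$; the outcome is an equivalent instance of \itte whose underlying graph $H$ is an induced subgraph of $G - N_G[v]$.

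The payoff of branching on $v$ is now provided by \cref{thm:prime-graph-claw-free}: since $G$ is prime and $S_{2,1,1}$-free, every prime induced subgraph of $G - N_G[v]$, and therefore of $H$, is claw-free. I would then invoke \cref{lem:X-and-X-star} once more, this time applied to the hereditary class of all induced subgraphs of $H$: the resulting ``prime-or-clique'' subclass consists only of claw-free graphs and cliques on at most three vertices, so \itte can be solved on it in polynomial time by \cref{thm:itt-claw-free} together with the trivial clique case. Consequently the subproblem for each $v$ is handled in polynomial time, and the full algorithm makes at most $|V(G)|+1$ such calls, giving an overall polynomial running time.

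The step I expect to require the most care is verifying that after forcing $v \in X$ and discarding $Y''$ via \cref{prop:itt-itte} (and then removing the now-isolated vertex $v$), the resulting graph $H$ really is an induced subgraph of $G - N_G[v]$; only then does \cref{thm:prime-graph-claw-free} hand us the claw-freeness needed to apply \cref{thm:itt-claw-free}. Once this bookkeeping is settled, the argument is a direct composition of the tools developed in \cref{sec:tools} and \cref{sec:clawfree}.
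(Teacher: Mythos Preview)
Your proposal is correct and follows essentially the same approach as the paper: apply \cref{lem:X-and-X-star} once to reduce to prime $S_{2,1,1}$-free graphs, branch on a vertex $v$ to be placed in the transversal, pass to an induced subgraph of $G-N[v]$, and apply \cref{lem:X-and-X-star} a second time so that \cref{thm:prime-graph-claw-free} guarantees the remaining prime pieces are claw-free and \cref{thm:itt-claw-free} finishes. The only cosmetic differences are that the paper does the ``remove $N[v]$ and patch triangles/edges'' step by hand rather than via \cref{prop:itt-itte}, and it phrases the second application of \cref{lem:X-and-X-star} for a single class $\cY$ rather than per-instance; neither changes the argument.
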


\begin{proof}
Let $(G,X',Y',E')$ be an instance of \itte such that $G$ is $S_{2,1,1}$-free.  By calling \cref{lem:X-and-X-star} for the class of $S_{2,1,1}$-free graphs, it is enough to consider $G$ such that $G$ is either a prime $S_{2,1,1}$-free graph or a clique. If $Y'=V(G)$, then we can verify in polynomial time if $X=\emptyset$ is a solution. So since now we assume that $Y'$ is a proper subset of $V(G)$. Observe that if $(G,X',Y',E')$ is a yes-instance, then there exists a solution $X$ that is non-empty. Indeed, for a solution $X=\emptyset$, we can safely add a vertex $v\in V(G)\setminus Y'$.

For every $v \notin Y'$ we define the instance $\cI_v:=(G,X' \cup \{v\},Y',E')$.
We return that $(G,X',Y',E')$ is a yes-instance if and only if there exists $v \notin Y'$ for which $\cI_v$ is a yes-instance.
Hence, let $v \notin Y'$ be fixed.
Clearly, for any solution $X$, we have that $X\cap N(v)=\emptyset$, so if there is a vertex in $N(v)\cap X'$ or there is an edge in $E'$ with both endpoints in $N(v)$, we report a no-instance.

Define $G_v:=G-N[v]$. We initialize $X'':=X'$ and $Y'':=Y'-N[v]$, and let $E''$ contain these edges from $E'$ that have both endpoints in $V(G_v)$. Consider a triangle $xyz$.
If $x\in N(v)$ and $y,z \in V(G_v)$ we add $yz$ to $E''$. If $x,y \in N(v)$ and $z \in V(G_v)$ we  add $z$ to $X''$. Finally, for every edge $uw\in E'$ with $u\in N(v)$ and $w\in V(G_v)$, we add $w$ to $X''$. Hence, it is enough to focus on the instance $(G_v,X'',Y'',E'')$, as it is clearly equivalent to the instance $\cI_v$. Note that this reduction can be performed in polynomial time.

We call again \cref{lem:X-and-X-star}, now for the class $\cY:=\{G_w\ | \ G\in \cX, w\in V(G) \}$, so it is enough to solve the problem for the class $\cY^*$ of all induced subgraphs of the graphs in $\cY$ that are either prime or cliques. Clearly, every clique is claw-free. Together with \cref{thm:prime-graph-claw-free}, it implies that every graph in $\cY^*$ is claw-free. By \cref{thm:itt-claw-free},  \itte  can be solved in polynomial time on $\cY^*$, and therefore it can be solved in polynomial time on $\cY$. Since $G_v\in \cY$, the instance $(G_v,X'',Y'',E'')$ can be solved in polynomial time. 
\end{proof}

Combining \cref{thm:ittereduce} with \cref{thm:ittefork} we immediatey obtain our main algorithmic result.

\mainthm*

\newpage
\section{Minimal obstructions to \coloring{W_k} in claw-free graphs}\label{sec:obstructions}
Recall that for graphs $F,H$, we say that $G$ is a \emph{$F$-free minimal $H$-obstruction} if:
\begin{itemize}
\item $G$ is $F$-free,
\item $G$ does not admit a homomorphism to $H$,
\item every induced subgraph of $G$ admits a homomorphism to $H$.
\end{itemize}
In this section we show the following result.

\begin{theorem}\label{thm:obstructions}
For all odd $k \geq 5$, there are infinitely many claw-free minimal $W_k$-obstructions.
\end{theorem}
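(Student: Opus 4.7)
The plan is to exhibit, for every odd $k \geq 5$, the infinite family $\mathcal{F} := \{C_n^2 : n \geq 7,\ n \equiv 1 \pmod{3}\}$, where $C_n^2$ denotes the second power of the $n$-cycle, i.e., the circulant graph on $\mathbb{Z}_n$ with edges joining pairs of vertices at circular distance $1$ or $2$. The members of $\mathcal{F}$ are pairwise non-isomorphic (they have distinct orders), so it suffices to show that each $C_n^2 \in \mathcal{F}$ is claw-free and simultaneously a minimal $W_k$-obstruction for every odd $k \geq 5$.

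Claw-freeness is immediate: for any $v$, the neighborhood $N(v) = \{v-2, v-1, v+1, v+2\}$ induces a $P_4$ (the three edges are precisely the pairs at circular distance at most $2$), whose independence number is $2$. To show $C_n^2 \not\to W_k$, suppose a homomorphism $\phi: C_n^2 \to W_k$ exists and set $X := \phi^{-1}(0)$. Then $X$ is independent and $C_n^2 - X$ maps to $C_k$; since $C_k$ is triangle-free for $k \geq 5$, $X$ must intersect every triangle of $C_n^2$. The triangles are exactly the arcs $\{i, i+1, i+2\}$ for $i \in \mathbb{Z}_n$; together with independence, this forces the circular gaps between consecutive elements of $X$ to all equal $3$, hence $3 \mid n$, contradicting $n \equiv 1 \pmod 3$.

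For minimality, by the rotational symmetry of $C_n^2$ it suffices to show $C_n^2 - 0 \to W_k$. Writing $n - 1 = 3t$, I would take $X := \{3, 6, \ldots, 3t\}$. A short check shows $X$ is independent in $C_n^2$ and covers every triangle of $C_n^2 - 0$. Crucially, $n-1 = 3t$ lies in $X$, so the only potential wrap-around edge $(1, n-1)$ of $C_n^2 - 0$ (at circular distance $2$) is destroyed by removing $X$. A distance computation then reveals that the induced subgraph $C_n^2 - 0 - X$ is the path $1 - 2 - 4 - 5 - \cdots - (n-3) - (n-2)$; in particular it is bipartite and maps to $C_k$ for every odd $k \geq 5$ via a proper $2$-coloring. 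Sending $X$ to the hub and the rest via this $2$-coloring yields $C_n^2 - 0 \to W_k$. Since every proper induced subgraph of $C_n^2$ is contained in some $C_n^2 - v$, minimality follows.

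The principal subtlety, and the reason for restricting to $n \equiv 1 \pmod 3$, is ensuring that the remainder after removing an optimal triangle transversal is bipartite, so as to obtain uniformity in $k$. For $n \equiv 2 \pmod 3$ the analogous transversal leaves an induced odd cycle of length $n - 3$, which fails to map to $C_k$ as soon as $k > n - 3$; restricting to $n \equiv 1 \pmod 3$ aligns the transversal with the removed vertex $0$ so that the cyclic structure breaks, producing a path. Everything else in the argument amounts to careful bookkeeping of circular distances.
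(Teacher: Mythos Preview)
Your argument is correct. The family $\{C_n^2 : n \geq 7,\ n \equiv 1 \pmod 3\}$ does the job: claw-freeness follows from the $P_4$-neighbourhoods; the only triangles are the arcs $\{i,i+1,i+2\}$ (for $n\geq 7$ the wrap-around case forces $n\leq 6$), so any independent triangle transversal must have all circular gaps equal to $3$, contradicting $n\equiv 1\pmod 3$; and for minimality, with $n=3t+1$ the set $X=\{3,6,\dots,3t=n-1\}$ is independent, meets every triangle of $C_n^2-0$, and leaves the path $1\text{--}2\text{--}4\text{--}5\text{--}\cdots\text{--}(n-3)\text{--}(n-2)$, which is bipartite and hence $C_k$-colourable for every odd $k\geq 5$. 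Your remark about why $n\equiv 2\pmod 3$ is excluded is also apt.

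This is a genuinely different and more direct route than the paper's. The paper does not exhibit the obstructions explicitly: it fixes a $3$-regular graph $Q$ with no perfect matching, replaces vertices by triangles and edges by $\ell$-chains of diamonds to obtain a claw-free graph $Q_\ell$, argues that a $W_k$-colouring of $Q_\ell$ would induce a perfect matching of $Q$, and then passes to an unspecified minimal non-$W_k$-colourable induced subgraph $Z_\ell$; a chordality argument guarantees $Z_\ell$ has a long induced cycle, and increasing $\ell$ produces infinitely many pairwise distinct $Z_\ell$'s. What the paper's approach buys is a structural link between $W_k$-colourings and perfect matchings (which resonates with the algorithmic part of the paper), but the price is non-explicitness: one never sees the obstructions themselves. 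Your approach is shorter, fully explicit, and uniform in $k$ via the same family; it also makes minimality transparent rather than obtaining it by an existential ``pass to a minimal subgraph'' step.
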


Before we proceed to the proof, let us introduce an important gadget, that will be used in the proof of \cref{thm:obstructions}, as well as later in \cref{sec:otherf}.
Consider the graph called a \emph{diamond}, i.e., $K_4$ with one edge removed, see \cref{fig:chain}.
For a positive integer $\ell$, we an \emph{$\ell$-chain of diamonds} is the graph obtained from $\ell$ copies of the diamond by identifying their vertices of degree 2 so that the copies are arranged in a path-like manner, see \cref{fig:chain} (right).
An $\ell$-chain of diamonds has exactly two vertices of the degree 2, we call them \emph{endpoints}.

\begin{figure}[htb]
\centering
\raisebox{0.8\height}{\includegraphics[page=7]{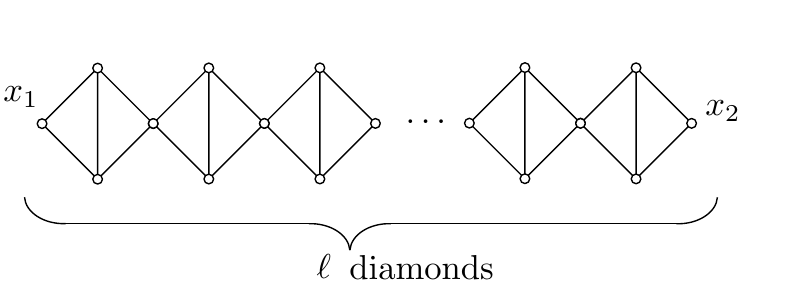}} \hskip 2cm
\includegraphics[page=1]{figures}
\caption{The diamond (left), and an $\ell$-chain of diamonds with endpoints $x_1$ and $x_2$ (right).} \label{fig:chain}
\end{figure}

Recall that by $0$ we denote the universal vertex of $W_k$.
The following observation can be easily verified.
\begin{observation}\label{obs:chain-endpoints}
Let $\ell >0$, let $k$ be an odd integer, and let $R$ be the $\ell$-chain of diamonds with endpoints $x_1$ and $x_2$.
Let $\varphi: R \to W_k$. 
Then $\varphi(x_1)=0$ if and only if $\varphi(x_2)=0$.

\noindent On the other hand, if $\ell > k/2$, then (i) for every $i,j \in [k]$ there exists a homomorphism $\varphi_{ij}:R \to W_k$ such that $\varphi(x_1)=i$ and $\varphi(x_2)=j$, and (ii)
there exists a homomorphism $\varphi_0:R \to W_k$ such that $\varphi(x_1)=\varphi(x_2)=0$.
\end{observation}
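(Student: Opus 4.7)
The plan is to decompose the chain diamond-by-diamond and analyze what each diamond can do to the ``junction colors'' $\varphi(j_0), \varphi(j_1), \ldots, \varphi(j_\ell)$, where $j_0 = x_1$, $j_\ell = x_2$, and $j_i$ is the shared degree-2 vertex of the $i$-th and $(i+1)$-th diamonds. I will call the two remaining (degree-3) vertices of the $i$-th diamond $a_i$ and $b_i$; they are adjacent to each other and to both $j_{i-1}$ and $j_i$.

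For part 1, the key local observation is that $\varphi(a_i)\varphi(b_i)$ must be an edge of $W_k$, and both $\varphi(j_{i-1})$ and $\varphi(j_i)$ must be common neighbors of the endpoints of this edge in $W_k$. I will do a quick case analysis on the two possible types of edges in $W_k$: (a) if $\varphi(a_i)\varphi(b_i)$ is an edge $p(p+1)$ of the outer $C_k$, then the only common neighbor of $p, p+1$ in $W_k$ is the universal vertex $0$; (b) if $\varphi(a_i)\varphi(b_i)$ is a ``spoke'' $\{0,p\}$, then the common neighbors in $W_k$ are precisely $\{p-1, p+1\} \subseteq [k]$. Thus within each diamond $\varphi(j_{i-1}) = 0$ iff $\varphi(j_i) = 0$, and iterating along the chain gives part 1.

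The same case analysis drives the constructions in part 2. For (ii), setting $\varphi(j_i) = 0$ for all $i$ and picking, in each diamond independently, any adjacent pair $p(p+1)$ on $C_k$ to be $\varphi(a_i)\varphi(b_i)$ yields a valid homomorphism sending both endpoints to $0$ (no bound on $\ell$ is actually needed for this part). For (i), I will use case (b) exclusively: it shows that from a junction color $q \in [k]$ one may pass in a single diamond to any element of $\{q-2,\, q,\, q+2\} \pmod k$, by taking the spoke $\{0, q-1\}$ or $\{0, q+1\}$ for $\{\varphi(a_i), \varphi(b_i)\}$ and choosing $\varphi(j_i)$ appropriately among the two common neighbors. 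Since $k$ is odd, $\gcd(2,k) = 1$, so any residue $j - i \pmod k$ can be written as $2m \pmod k$ with $|m| \leq (k-1)/2 < \ell$. I will apply $|m|$ shifts of the appropriate sign and pad the remaining $\ell - |m|$ diamonds with ``zero'' shifts to produce $\varphi_{ij}$.

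The only real obstacle is bookkeeping: verifying that the local moves in consecutive diamonds compose into a single well-defined homomorphism. However, since neighboring diamonds share only the junction vertex $j_i$, and the case (b) analysis is phrased entirely in terms of the pair $(\varphi(j_{i-1}), \varphi(j_i))$, specifying the sequence of junction colors consistent with the allowed single-diamond moves automatically determines the interior labels $\varphi(a_i), \varphi(b_i)$ in each diamond independently, so no conflict arises.
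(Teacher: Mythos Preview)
Your argument is correct. The diamond-by-diamond analysis of the two types of edges in $W_k$ is exactly the right observation: for an outer edge $p(p{+}1)$ the unique common neighbor in $W_k$ is $0$ (here $k\geq 5$ is needed so that $p$ and $p{+}1$ have no common neighbor on $C_k$), while for a spoke $\{0,p\}$ the common neighbors are $\{p{-}1,p{+}1\}\subseteq [k]$. This immediately gives the first part, and your $\pm 2/0$ step construction together with the count $|m|\leq (k-1)/2<\ell$ handles part (i); part (ii) is immediate.

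The paper itself provides no proof of this statement: it is recorded as an observation that ``can be easily verified'' and left to the reader. So there is nothing to compare your approach against; you have simply filled in the omitted verification, and done so cleanly. If you want to be fully self-contained, you might make explicit the assumption $k\geq 5$ (inherited from the paper's definition of $W_k$) when you assert in case (a) that $0$ is the \emph{only} common neighbor, since for $k=3$ this would fail.
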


Let us discuss the intuition behind the construction.
For a 3-regular graph $Q$ and an integer $\ell$, let $Q_\ell$ denote the graph obtained from $Q$ as follows.
For every vertex of $Q$ we introduce a triangle called \emph{vertex triangle}. Each edge of $Q$ is substituted with a $\ell$-chain of diamonds. We do it in such a way that each vertex from a vertex-triangle is connected to exactly one chain of diamonds; this is possible as $G$ is 3-regular.
Note that the graphs constructed in such a way are claw-free and $K_4$-free.

Consider a homomorphism $\phi$ from such constructed graph $Q_\ell$ to $W_k$.
Observe that exactly one vertex of each vertex triangle is mapped by $\phi$ to 0, i.e., the central vertex of $W_k$.
Moreover, for every $\ell$-chain of diamonds, by \cref{obs:chain-endpoints}, either both endpoints are mapped to $0$, or none of them.
Consequently, $\phi$ defines a perfect matching of $Q$, where chains of diamonds whose endpoints are mapped to 0 correspond to the edges of this matching.

Thus if $Q$ \emph{does not} have a perfect matching, then $Q_\ell$ does not admit a homomorphism to $W_k$.
Furthermore, by making $\ell$ larger and larger, we can ensure that the constructed graphs have longer and longer cycles,
so one cannot be a subgraph of another. However, we point out that the minimality condition is not necessarily preserved.
We deal with this in the following lemma.

\begin{lemma}\label{lem:obsconstr}
Let $k \geq 5$ be an odd integer.
For every integer $\ell \geq 4$ there exists a claw-free minimal $W_k$-obstruction $Z_\ell$, such that
\begin{itemize}
\item $Z_\ell$ has no induced cycles of length in $[4,\ell]$,
\item $Z_\ell$ has an induced cycle longer than $\ell$.
\end{itemize}
\end{lemma}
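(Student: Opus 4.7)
My plan is to take $Z_\ell$ to be a minimal $W_k$-obstruction induced in the ``expansion'' graph $Q_\ell$ described in the paragraph preceding the lemma, instantiated with a specific cubic graph $Q$ without a perfect matching. I will show that $Q_\ell$ itself is a $W_k$-obstruction, that all of its induced cycles are either triangles or have length at least $6\ell + 3$, and finally that passing to a minimal obstruction $Z_\ell$ still leaves one of the long cycles alive. The last of these is the main obstacle I foresee, and I will resolve it with a chordal-graph argument based on perfection and $K_4$-freeness.

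For $Q$, I would take three copies $H_1, H_2, H_3$ of the graph obtained from $K_4$ by subdividing a single edge, together with a new vertex $v$ adjacent to the unique degree-$2$ vertex of each $H_i$. The graph $Q$ has $16$ vertices, is cubic and simple, and the three components of $Q - v$ are the $H_i$'s, each on $5$ (odd) vertices; since $o(Q-v) = 3 > 1$, Tutte's theorem forbids a perfect matching in $Q$. The resulting $Q_\ell$ is claw-free and $K_4$-free: a neighborhood inspection of the degree-$4$ chain-endpoints (each lying simultaneously in a vertex triangle and at the end of a diamond) and of the degree-$3$ interior vertices of diamonds shows that every such neighborhood contains an edge, ruling out claws; $K_4$-freeness is immediate from the local triangle structure. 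Moreover, every triangle of $W_k$ passes through the hub $0$, so any homomorphism $\varphi : Q_\ell \to W_k$ sends exactly one vertex of each vertex triangle $T_u$ to $0$; by \cref{obs:chain-endpoints} the two endpoints of each diamond chain share their $0$-status under $\varphi$; hence the chains whose endpoints map to $0$ form a perfect matching of $Q$, contradicting the choice of $Q$. Thus $Q_\ell \not\to W_k$.

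Next I would analyse the induced cycles of $Q_\ell$. An induced cycle entirely contained in a single chain must be one of the two triangles of a single diamond, because moving between two consecutive diamonds forces routing through their unique shared vertex, which cannot be visited twice by a cycle. An induced cycle $C$ not confined to a single chain or vertex triangle traverses $t \geq 2$ distinct chains of $Q_\ell$, and since $Q$ is simple we must in fact have $t \geq 3$. Each chain-segment of $C$ spans its entire chain, so has length at least $2\ell$, and each visited vertex triangle contributes exactly one edge to $C$; hence $|C| \geq t(2\ell + 1) \geq 3(2\ell+1) = 6\ell+3$. In particular, $Q_\ell$ has no induced cycles of length in $[4, \ell]$.

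Finally, let $Z_\ell$ be any minimal $W_k$-obstruction induced in $Q_\ell$; such a $Z_\ell$ exists because $Q_\ell$ itself is a $W_k$-obstruction. As an induced subgraph of $Q_\ell$, $Z_\ell$ is claw-free and has no induced cycle of length in $[4, \ell]$. To establish the remaining condition, I would argue by contradiction: if $Z_\ell$ contained no induced cycle of length $\geq 4$, then $Z_\ell$ would be chordal; since $Z_\ell$ is also $K_4$-free we would have $\omega(Z_\ell) \leq 3$, and chordal graphs being perfect would give $\chi(Z_\ell) \leq 3$. Composing a proper $3$-colouring of $Z_\ell$ with the inclusion of any triangle of $W_k$ then yields a homomorphism $Z_\ell \to W_k$, contradicting $Z_\ell \not\to W_k$. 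Hence $Z_\ell$ must contain some induced cycle of length $\geq 4$, and by the analysis above any such cycle has length at least $6\ell + 3 > \ell$, completing the proof.
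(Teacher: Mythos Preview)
Your proposal is correct and follows essentially the same route as the paper: build $Q_\ell$ from a cubic graph $Q$ without a perfect matching, pass to a minimal $W_k$-obstruction $Z_\ell$ inside it, and use the chordal/perfect/$K_4$-free argument to force a long induced cycle. You supply more detail than the paper (an explicit $Q$ verified via Tutte's condition and the sharper bound $6\ell+3$ on the long cycles), but the strategy is identical.
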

\begin{proof}
Let $Q$ be a 3-regular graph with no perfect matching, e.g., the graph depicted in \cref{fig:nopm}.
Consider the graph $Q_\ell$ and recall that it is claw-free and $K_4$-free, and does not have a homomorphism to $W_k$.
Also note that every induced cycle in $Q_\ell$ is either a triangle or is longer than $\ell$. 

\begin{figure}[htb]
\centering
\includegraphics[page=3]{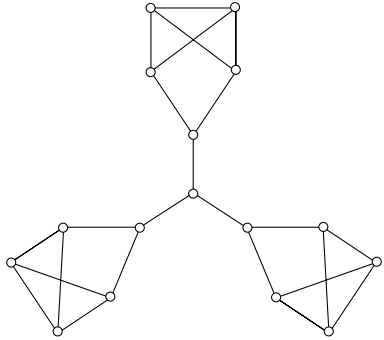}
\caption{An example of a 3-regular graph with no perfect matching.}\label{fig:nopm}
\end{figure}

Let $Z_\ell$ be a minimal induced subgraph of $Q_{\ell}$ which does not have a homomorphism to $W_k$.
We can obtain $Z_\ell$ from $Q_\ell$ by deleting vertices one by one until we obtain a graph with desired property.
Clearly $Z_\ell$ has no induced cycles of length in $[4,\ell]$ and is $K_4$-free. 

Now suppose that is does not have an induced cycle longer than $\ell$, i.e., that all induced cycles in $Z_\ell$ are triangles.
This means that $Z_\ell$ is chordal and thus perfect.
As $Z_\ell$ does not contain $K_4$, we conclude that it is 3-colorable, so in particular admits a homomorphism to $W_k$. This contradicts the definition of $Z_\ell$.
\end{proof}

Now we are ready to prove \cref{thm:obstructions}.
\begin{proof}[Proof of \cref{thm:obstructions}]
Let $G_1$ be the graph given by \cref{lem:obsconstr} for $\ell = 4$, i.e., $G_1 = Z_4$.
From now on assume that we have already constructed pairwise distinct graphs $G_1,\ldots,G_i$, each of which is a claw-free minimal $W_k$-obstruction. 

Now let $G_{i+1}$ be the graph obtained by applying \cref{lem:obsconstr} for $\ell$ equal to the length of a largest induced cycle in a graph in $\{G_1,\ldots,G_i\}$. Clearly $G_{i+1}=Z_\ell$ is a claw-free minimal $W_k$-obstruction.
We only need to make sure that it is distinct than any graph constructed so far.
However, no graph in $G_1,\ldots,G_i$ has an induced cycle with more than $\ell$ vertices,
while $G_{i+1}$ has such a cycle.
Consequently, iterating this process we can construct a family of arbitrary size.
\end{proof}

\newpage
\section{\colorext{W_5} in $S_{3,3,3}$-free graphs is hard}\label{sec:hardness}
In this section we prove the following hardness result.

\begin{theorem}
The \colorext{W_5} problem is \NP-hard in $S_{3,3,3}$-free graphs of maximum degree at most $5$.
Furthermore, it cannot be solved in subexponential time, unless the ETH fails.
\end{theorem}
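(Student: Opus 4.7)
The plan is to prove the theorem by a polynomial-time, linear-blowup reduction from a problem that is known to be $\NP$-hard and not solvable in subexponential time under the ETH. A convenient starting point is \emph{Positive 1-in-3 SAT} restricted to formulas in which every variable occurs in at most three clauses; this variant is still $\NP$-hard and ETH-hard. Given such a formula $\varphi$, I will build an instance $(G, U, \phi)$ of $\colorext{W_5}$ whose graph $G$ is $S_{3,3,3}$-free and of maximum degree at most $5$, so that $\varphi$ is satisfiable exactly when $(G,U,\phi)$ is a yes-instance.

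The underlying idea is the same one that drives \Cref{thm:mainthm}: a $W_5$-coloring of $G$ is exactly an independent set $X \subseteq V(G)$ (the vertices mapped to the central vertex $0$) together with a $C_5$-coloring of $G-X$; in particular, since $C_5$ is triangle-free, every triangle of $G$ contributes exactly one vertex to $X$. I will encode the truth value of a variable $x$ by whether a ``terminal'' vertex $t_x$ lies in $X$. The two building blocks are:
\begin{itemize}
\item \textbf{Clause gadgets.} For a clause $C = x \vee y \vee z$, I attach a triangle $T_C$ with vertices $a_C,b_C,c_C$ corresponding to the three literals. Since $G$ must be $K_4$-free and a triangle must contribute exactly one vertex to $X$, this automatically enforces the $1$-in-$3$ constraint.
\item \textbf{Wires.} To link $t_x$ to the vertices of the clause triangles where $x$ occurs, I use \emph{chains of diamonds} from \Cref{sec:obstructions}. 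By \Cref{obs:chain-endpoints} a chain propagates the ``$0$-status'' between its two endpoints: one endpoint maps to $0$ iff the other does. Chains of diamonds are claw-free, giving us a robust starting point for controlling induced subdivided claws.
\end{itemize}
A handful of precolored vertices (typically single vertices precolored with $1\in V(C_5)$) will be used to break symmetries and to rule out spurious $C_5$-colorings along the ``non-$0$'' branches.

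The heart of the proof, and the main obstacle, is wiring the gadgets so that the resulting graph is genuinely $S_{3,3,3}$-free while keeping the maximum degree at most $5$. A naive construction in which each of the (at most three) clause occurrences of a variable hangs off $t_x$ by its own independent diamond chain is \emph{not} $S_{3,3,3}$-free: for any vertex $v$ with three independent chains attached, a direct case analysis produces an induced $P_4$ from $v$ into each chain (e.g.\ $v,x_0,p,x_1$ using the notation of \Cref{sec:obstructions}), and these three $P_4$'s together with $v$ induce an $S_{3,3,3}$. To avoid this, my plan is to \emph{thread} all (at most three) occurrences of a given variable along a single diamond chain $R_x$, identifying three of its internal degree-$2$ ``glue'' vertices with the corresponding vertices of the clause triangles that contain~$x$. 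A used glue vertex then has degree $4+1=5$, meeting the degree bound exactly, and a careful but mechanical inspection of all induced $P_4$'s starting at a glue vertex (and of all induced $P_4$'s starting at a clause-triangle vertex, a terminal, or an ``off'' vertex of a diamond) shows that, as long as the spacing between consecutive connection points along $R_x$ is large enough (a small constant suffices), every candidate center for an $S_{3,3,3}$ has one of its three supposed arms forced through an edge inside some diamond, destroying inducedness.

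With the construction in place, correctness is routine: a 1-in-3 satisfying assignment yields a valid $W_5$-coloring extending $\phi$ by putting $t_x$ (and the corresponding connection points on $R_x$) into $X$ iff $x$ is true, and then completing a $C_5$-coloring of $G-X$ along each chain using \Cref{obs:chain-endpoints}; conversely, any $W_5$-coloring of $G$ gives a truth assignment by reading off the $0$-status of the terminals, and the clause-triangle argument ensures each clause has exactly one true literal. Because the reduction blows up $\varphi$ only linearly (a constant number of diamonds per literal occurrence), a subexponential algorithm for $\colorext{W_5}$ on $S_{3,3,3}$-free graphs of maximum degree at most $5$ would give a subexponential algorithm for Positive 1-in-3 SAT, contradicting the ETH.
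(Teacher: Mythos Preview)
Your proposal takes a different route from the paper and contains a genuine gap. The paper does not reduce from satisfiability at all; it reduces from \coloring{3} on claw-free graphs of maximum degree~$4$ and minimum degree~$3$ (so every vertex lies in a triangle). To each $v\in V(G)$ it attaches a three-vertex path $x_v\!-\!y_v\!-\!z_v$ with the extra edge $y_vv$, and precolours $\varphi(x_v)=0$, $\varphi(z_v)=4$. This forces $\psi(y_v)\in\{3,5\}$, hence $\psi(v)\in\{0,1,2,4\}$; since $v$ sits in a triangle and $W_5[\{0,1,2,4\}]$ contains no triangle through~$4$, in fact $\psi(v)\in\{0,1,2\}$, so any extension restricts to a proper $3$-colouring of $G$. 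The $S_{3,3,3}$-freeness is then a three-line case check on the location of the centre (claw-freeness of $G$ forces one arm through $y_v$, which dead-ends at a degree-$1$ vertex after two steps).

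Your construction, by contrast, breaks on the degree bound and on $S_{3,3,3}$-freeness. In an $\ell$-chain of diamonds the only degree-$2$ vertices are the two endpoints; every \emph{internal} glue vertex is the identification of two diamond tips and has degree~$4$. Identifying such a vertex with a clause-triangle vertex (degree~$2$ inside the triangle) yields degree~$6$, not the ``$4+1=5$'' you assert. Worse, at that identified vertex the six neighbours split into three disjoint edges (two side-vertices of the left diamond, two of the right diamond, and the two remaining clause vertices), so one can pick one neighbour from each pair and extend each to an induced $P_4$: into the left of $R_x$, into the right of $R_x$, and through the clause into a diamond of $R_y$. No arm is forced through a diamond edge, and an induced $S_{3,3,3}$ results; your ``mechanical inspection'' would not succeed. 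Replacing identification by a single edge fixes the degree but then a single edge does not propagate the $0$-status in either direction, so the correctness argument collapses. As written, the reduction establishes neither $\Delta\le 5$ nor $S_{3,3,3}$-freeness.
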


\begin{proof}
We  reduce from \coloring{3} in claw-free graphs of maximum degree at most $4$ which is known to be \NP-hard and cannot be solved in subexponential time, unless the ETH fails~\cite{Holyer1981TheNO}.
Let $G$ be an instance of \coloring{3} such that $G$ is a claw-free graph of maximum degree at most $4$. 
Note that we can assume that the minimum degree of $G$ is at least $3$.
Indeed, if there is a vertex $v$ of $G$ of degree at most $2$, and we can properly color $G-v$, then there will be always a color left for $v$.
Therefore, we can exhaustively remove all vertices of degree at most $2$, obtaining an equivalent instance.
Note that if $G$ is claw-free and on minimum degree 3, every vertex of $G$ belongs to a triangle.

We construct an instance $(\tG,U,\vphi)$ of \colorext{W_5} as follows. 
We initialize $V(\tG):=V(G)$, $E(\tG):=E(G)$, and $U:=\emptyset$. For each $v\in V(G)$ we proceed as follows. 
We add to $V(\tG)$ vertices $x_v,y_v,z_v$ and to $E(\tG)$ we add $x_vy_v$, $y_vz_v$, and $y_vv$. 
Moreover we add $x_v,z_v$ to $U$ and set $\vphi(x_v):=0$ and $\vphi(z_v):=4$. 
This completes the construction of the instance $(\tG,U,\vphi)$.

By the construction and the fact that $G$ has maximum degree at most $4$, we observe that $\tG$ has maximum degree at most $5$. Let us show that $\tG$ is $S_{3,3,3}$-free. 
Suppose to the contrary that there is an induced $S_{3,3,3}$ in $\tG$. 
Denote its vertices as $s_0$ and $s_{i,j}$ for $i,j\in [3]$, so that $s_0$ is the vertex of degree $3$ in $S_{3,3,3}$ and for $i\in [3]$ vertices $s_{i,1},s_{i,2},s_{i,3}$ are consecutive vertices of a path and $s_{i,1}$ is adjacent to $s_0$.
Suppose first that $s_0=v\in V(G)$. Since $G$ is claw-free one of $s_{1,1},s_{2,1},s_{3,1}$, say $s_{1,1}$, must be $y_v$.
Then the path on vertices $s_{1,1},s_{1,2},s_{1,3}$ must be contained in $\{x_v,y_v,z_v\}$ with $y_v$ being an endvertex, but there is no such path, a contradiction.
So now suppose that $s_0\notin V(G)$. Since $s_0$ is of degree $3$ in $S_{3,3,3}$, the vertex $s_0=y_v$ for some $v\in V(G)$.
Then two of $s_{1,1},s_{2,1},s_{3,1}$, say $s_{1,1},s_{2,1}$, must be in $\{x_v,z_v\}$.
Since $x_v,z_v$ are of degree $1$, there is no vertex in $\tG$ that can be $s_{1,2}$ or $s_{2,2}$, a contradiction.
Hence, $\tG$ is $S_{3,3,3}$-free.

So let us verify the equivalence. Suppose first that there exists a $3$-coloring $f$ of $G$, and assume that this coloring uses colors $\{0,1,2\}$. Define $\psi: \tG \to W_5$ as follows.
For every $v\in V(G)$, define $\psi(v):=f(v)$ and set $\psi(y_v)$ as a common neighbor of the vertices $0,f(v),4$ in $W_5$ -- note that for every $f(v)\in\{0,1,2\}$ such a common neighbor exists. 
For every $u\in U$, we set $\psi(u):=\vphi(u)$.
This completes the definition of $\psi$.
Let us verify that $\psi$ is a homomorphism from $\tG$ to $W_5$ that extends $\vphi$.
The latter follows immediately from the definition of $\psi$.
Let $uw\in E(\tG)$. If both $u,w\in V(G)$, then $\psi(u),\psi(w)$ are two distinct vertices of $\{0,1,2\}$ and hence $\psi(u)\psi(w)\in E(W_5)$. So assume that $u\notin V(G)$. Then $uw\in \{x_vy_v,y_vz_v,y_vv\}$ for some $v\in V(G)$. It can be verified that in every case it holds that $\psi(u)\psi(w)\in E(W_5)$.

So now suppose that there exists a homomorphism $\psi: \tG \to W_5$ that extends $\vphi$. 
Let us show first that for every vertex $v\in V(G)$ it holds $\psi(v)\in \{0,1,2\}$. 
Let $v\in V(G)$. Since $\psi$ extends $\vphi$ it holds that $\psi(x_v)=0$ and $\psi(z_v)=4$. 
Therefore $\psi(y_v)\in \{3,5\}$ and thus $\psi(v)\in \{0,1,2,4\}$.
Suppose that there is a vertex $v\in V(G)$ such that $\psi(v)=4$.
Recall that $v$ is contained in some triangle $vuw$ in $G$.
So $\psi(v)\psi(u)\psi(w)$ must be a triangle in $W_5[\{0,1,2,4\}]$ containing $4$, but there is no such a triangle, a contradiction.
Hence, for every $v\in V(G)$ we have $\psi(v)\in \{0,1,2\}$.
Define a $3$-coloring $f: V(G) \to \{0,1,2\}$ so that $f(v):=\psi(v)$ for every $v\in V(G)$.
Since $\psi$ is a homomorphism, for every edge $uv\in E(G)$, it holds that $f(u)\neq f(v)$.

Finally, note that the number of vertices of $\tG$ is linear in the number of vertices of $G$, so the ETH lower bound follows.
This completes the proof.
\end{proof}

\newpage
\section{\coloring{W_k} in $F$-free graphs: beyond paths and subdivided claws}\label{sec:otherf}
\label{sec:otherf}
Recall that for an odd integer $k \geq 5$, by $W_k$ we denote the graph obtained from the $k$-cycle with vertices $1,2,\ldots,k$ by adding a universal vertex $0$.
Let $F$ be a connected graph.
We aim to show that if $F$ is neither a path nor a subdivided claw,
then for all $k \geq 5$ the problem is \NP-hard. We do it in several steps.

\begin{proposition}\label{prop:forbiddencycles}
Let $k \geq 5$ be an odd integer.
For every $g$, the \coloring{W_k} problem is \NP-hard in graphs of girth at least $g$.
Furthermore, the problem cannot be solved in subexponential time, unless the ETH fails.
\end{proposition}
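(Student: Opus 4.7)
The plan is to reduce from \coloring{W_k} on general graphs, which is \NP-hard by the Hell and Ne\v{s}et\v{r}il dichotomy (as $W_k$ is non-bipartite) and inherits a linear-size ETH-tight lower bound via the standard reduction from 3-SAT. The goal is to transform an arbitrary instance $G$ into an equivalent instance $\tG$ of girth at least $g$ with $|V(\tG)| = O_{g,k}(|V(G)|)$, so that the ETH bound transfers.

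The core ingredient will be a \emph{high-girth edge gadget} $E_g = E_g(k)$: a graph of girth at least $g$ equipped with two distinguished vertices $x, y$ such that
\begin{enumerate}
\item every homomorphism $\varphi \colon E_g \to W_k$ satisfies $\varphi(x)\varphi(y) \in E(W_k)$;
\item for every edge $ab \in E(W_k)$, some homomorphism $\varphi \colon E_g \to W_k$ realizes $\varphi(x) = a$ and $\varphi(y) = b$;
\item no cycle of $E_g$ of length less than $g$ passes through $x$ or $y$, so that identifying endpoints across copies does not create short cycles.
\end{enumerate}
Given such a gadget, the reduction replaces each edge $uv \in E(G)$ with a fresh copy of $E_g$, identifying $x$ with $u$ and $y$ with $v$; distinct copies are otherwise vertex-disjoint. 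The resulting graph $\tG$ has girth at least $g$ by property (3), and is $W_k$-colorable if and only if $G$ is by properties (1)--(2): the ``only if'' direction uses (1) to extract a valid coloring of $G$, and the ``if'' direction uses (2) to extend a coloring of $G$ across each edge gadget independently. Since $|E_g|$ depends only on $g$ and $k$, we have $|V(\tG)| = O_{g,k}(|V(G)|)$, and the ETH lower bound transfers.

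The main obstacle is constructing $E_g$ itself. The cleanest route is probabilistic, in the spirit of Erd\H{o}s's classical argument: start from a random graph of suitable density, delete one vertex from each short cycle to force girth at least $g$, and then exploit that $W_k$ is a non-bipartite core to argue that high girth rigidifies the admissible homomorphisms enough that a carefully chosen pair $x, y$ has $\varphi(x)\varphi(y)$ forced to be an edge of $W_k$. Condition (2) will follow from the edge-transitive action of suitable subgroups of $\mathrm{Aut}(W_k)$ (the rotation on the cycle together with flips). Alternatively, explicit constructions from finite geometry such as incidence graphs of generalized $g$-gons, or iterated bipartite double covers of $W_k$, can serve as the base of $E_g$. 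The careful analysis of the interplay between girth and homomorphism rigidity is the technical heart of the proof.
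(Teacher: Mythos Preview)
Your high-level plan---replace each edge of $G$ by a fixed high-girth gadget $E_g$ that enforces $W_k$-adjacency at its two terminals---is a sound template, but constructing $E_g$ is the entire content of the proof, and the proposal does not actually deliver it. The Erd\H{o}s random-graph argument gives graphs of large girth and large chromatic number; it says nothing about the relation $\{(\varphi(x),\varphi(y)):\varphi\colon E_g\to W_k\}$ for two \emph{fixed} vertices, so there is no mechanism there for forcing $\varphi(x)\varphi(y)\in E(W_k)$. More seriously, your appeal to edge-transitivity for condition~(2) is false: every automorphism of $W_k$ fixes the hub $0$ (the unique vertex of degree $k$), so $\mathrm{Aut}(W_k)\cong D_k$ has two edge orbits, the spokes $\{0,i\}$ and the rim edges $\{i,i{+}1\}$. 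Any gadget obtained from a uniquely $W_k$-colorable graph---which is the natural route to the rigidity you want---will therefore realise only one of these orbits at $(x,y)$, so condition~(2) fails and the reduction from general \coloring{W_k} does not go through. (The fallback suggestions fare no better: incidence graphs of generalised polygons and bipartite double covers are bipartite, hence map to any single edge of $W_k$ and impose no constraint on $(\varphi(x),\varphi(y))$ at all.)

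The paper sidesteps this obstacle by changing the source problem: it reduces from \coloring{C_k} on subcubic graphs rather than from \coloring{W_k}. It invokes Zhu's theorem that every core admits uniquely $H$-colorable graphs of arbitrarily large girth, takes such a graph $Z$ for $H=W_k$, and picks two vertices $x,x'$ of $Z$ receiving the same colour in $[k]$ under the unique colouring (hence under every colouring, since automorphisms fix $0$). Chaining copies of $Z$ along a subdivided claw $S_{\ell,\ell,\ell}$ yields, for each vertex $v$ of the source graph, a gadget $Y(v)$ with three far-apart ports forced to a common value in $[k]$; ports of adjacent vertices are then joined by a single edge, which is necessarily realised as a rim edge of $W_k$ and so encodes a $C_k$-edge. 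Only vertex-transitivity of $D_k$ on the rim is used here; edge-transitivity of $W_k$ is never needed. Your framework can be salvaged along the same lines: reduce from \coloring{C_k} and aim for a gadget whose terminal pair ranges over $E(C_k)$ rather than all of $E(W_k)$.
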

\begin{proof}
We reduce from the \coloring{C_k} problem in graphs with maximum degree 3~\cite{DBLP:conf/esa/ChudnovskyHRSZ19}, let $G$ be an instance.
We construct an instance $G'$ of \coloring{W_k} as follows.
For each vertex $v$ we will introduce a vertex gadget $Y(v)$, which is a graph with a designated three-element subset $I(v) \subseteq V(Y(v))$, which satisfies the following properties:
\begin{itemize}
\item $Y(v)$ admits a homomorphism to $W_k$,
\item in every homomorphism from $Y(v)$ to $W_k$, all vertices in $I(v)$ are mapped to the same element of $[k]$,
\item the girth of $Y(v)$ is at least $g$,
\item elements of $I(v)$ are at distance at least $g$ in $Y(v)$,
\item the size of $Y(v)$ depends only on $k$ and $g$.
\end{itemize}
For a moment let us assume that we can construct such gadgets.
For each edge $vw$ of $G$, we introduce an edge joining one element of $I(v)$ and one element in $I(w)$.
We do it in a way that each element from each $I(v)$ has at most one neighbor outside $Y(v)$, we can do it as $G$ is of maximum degree 3.

It is straighforward to observe that the properties of $Y(v)$ imply that $G'$ is a yes-instance of \coloring{W_k} if and only if $G$ is a yes-instance of \coloring{C_k}. Furthermore the constructed graph $G'$ has girth at least $g$.
Finally, as the number of vertices of $G'$ is linear in the number of vertices of $G$, the ETH lower bound also follows.

Thus the only thing left is to construct $Y(v)$. 
By the result of Zhu~\cite{DBLP:journals/jgt/Zhu96}, for every \emph{core} $H$ and every $\ell \geq 1$ there is a graph $Z$ with girth at least $\ell$
which is uniquely $H$-colorable, i.e., there exists a homomorphism $\varphi$ from $Z$ to $H$ with the property
that every homomorphism from $Z$ to $H$ can be obtained from $\varphi$ by combining it with an automorphism of $H$.
Let us skip the definition of a core, as it is not relevant for this paper; it is sufficient to know that $W_k$ is a core for every odd $k \geq 5$~\cite[Section 1.4]{hell2004graphs}.

Let $Z$ we obtained by calling this theorem for $H=W_k$ and $\ell := \max( g, 2(k+1) )$.
Let $\phi : Z \to W_k$ be the unique (up to automorphisms of $W_k$) homomorphism from $Z$ to $W_k$.
The graph $Z$ contains a cycle with at least $2(k+1)$ vertices, so there are at least $k+1$ vertices $x$
such that $\phi(x) \neq 0$. Consequently, there are two vertices $x,x'$ of $Z$ with $\phi(x)=\phi(x') \in [k]$.

Now let $Y(v)$ be the graph defined as follows.
We start with the subdivided claw $S_{\ell,\ell,\ell}$.
Then each edge $ab$ is substituted with a copy of $Z$, with $a$ identified with $x$ and $b$ identified with $x'$.
The copies of $x,x'$ that were not identified, i.e., the ones corresponding to the leaves 
of the subdivided star, form the set $I(v)$ (see \cref{fig:sausage}).

\begin{figure}[htb]
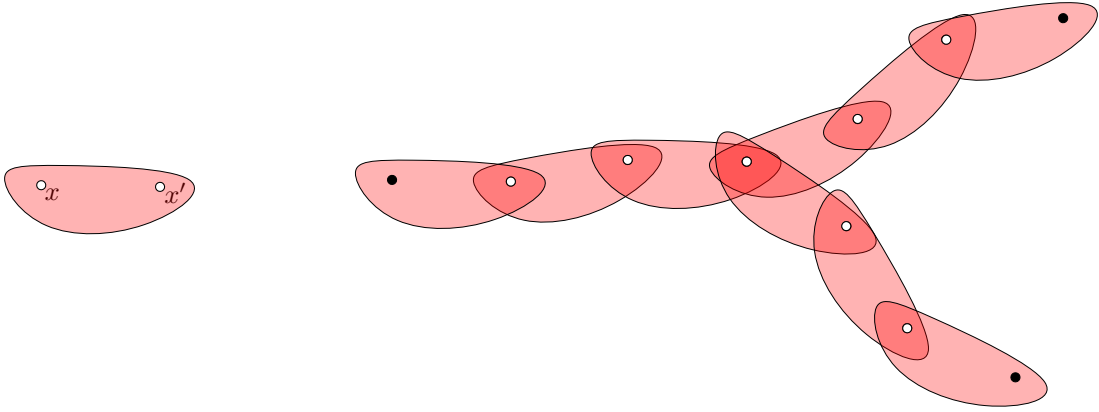

\centering
\raisebox{2.4\height}{\includegraphics[page=11]{figures}} \hskip 2cm
\includegraphics[page=10]{figures}
\caption{A schematic view of the graph $Z$ (left), and the graph $Y(v)$ constructed for $\ell=3$ (right). Black vertices form the set $I(v)$.} \label{fig:sausage}
\end{figure}

The properties of $Z$ and the automorphisms of $W_k$ imply that $Y(v)$ and $I(v)$ satisfy all desired properties.
Finally, as $G$ is of maximum degree 3 and $Z$ depends only on $H$ and $g$, i.e., if of constant size,
we conclude that the number of vertices in the constructed graph is linear in $|V(G)|$.
This completes the proof.
\end{proof}

For an integer $g$, let $\cX_g$ denote the class of graphs that satisfy the following properties:
\begin{itemize}
\item each $G \in \cX_g$ is of maximum degree 4,
\item each $G \in \cX_g$ is $K_{1,4}$-free,
\item for each $G \in \cX_g$ and any two vertices $u,v$ of $G$, each of which has three independent neighbors,
it holds that either $u$ and $v$ are at distance at least $g$, or $N[u]=N[v]$.
\end{itemize}

To show hardness for \coloring{W_k} in $\cX_g$, we will show a reduction from the \textsc{Pos-1-in-3-Sat} problem,
whose instance $(X,\cC)$ consists of a set $X$ of variables, and a set $\cC$ of clauses, such that each clause has exactly three elements, and all literals in the clauses are positive.
We ask for an assignment $\sigma:X \to \{0,1\}$ such that exactly one literal from each clause satisfied, i.e., the variable corresponding to that literal is mapped by $\sigma$ to $1$.

The following result is folklore; for completeness we include its proof in the appendix.

\begin{restatable}{theorem}{oneinthreesat}
\label{thm:oneinthreesat}
The \textsc{Pos-1-in-3-Sat} problem in \NP-hard, even if each variable occurs in at most 6 clauses.
Furthermore, it cannot be solved in subexponential time, unless the ETH fails.
\end{restatable}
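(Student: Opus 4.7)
The plan is to reduce from the general \textsc{Pos-1-in-3-Sat} problem with no bound on variable occurrences, which is \NP-hard by Schaefer's dichotomy theorem. The classical reduction from $3$-SAT to \textsc{Pos-1-in-3-Sat} produces a linear-size instance (each $3$-SAT clause is replaced by a constant number of positive 1-in-3 clauses using a fixed gadget with auxiliary variables), so combined with the Sparsification Lemma the reduction transfers the ETH lower bound from $3$-SAT. Hence it suffices, starting from an arbitrary instance $(X,\cC)$ of \textsc{Pos-1-in-3-Sat}, to construct in linear time an equivalent instance in which every variable occurs in at most $6$ clauses.

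The key ingredient is a simple \emph{equality gadget} that enforces $x=y$ using only positive 1-in-3 clauses. For fresh variables $u,v$, add the two clauses $\{x,u,v\}$ and $\{y,u,v\}$. The verification is a short case analysis: if $x=0$, then the first clause forces exactly one of $u,v$ to be $1$, so the second clause is already satisfied by $u,v$ alone and hence $y=0$; if $x=1$, then the first clause forces $u=v=0$, and then the second clause forces $y=1$. Thus the two clauses together enforce $x=y$, while each of the auxiliary variables $u,v$ occurs in exactly two clauses.

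Given $(X,\cC)$, for every variable $x\in X$ that occurs in $k$ clauses, introduce fresh copies $x_1,\ldots,x_k$, replace the $i$-th occurrence of $x$ by $x_i$, and append a path of $k-1$ equality gadgets enforcing $x_1=x_2=\cdots=x_k$ (each gadget using its own private pair of auxiliary variables). In the resulting instance every copy $x_i$ occurs at most three times (once in the original clause plus at most two adjacent gadgets), every auxiliary variable occurs exactly twice, so the bound $6$ is met with room to spare. The construction adds only $O(|\cC|)$ new variables and clauses, so the total size is linear in the size of the input, and both \NP-hardness and the ETH-based subexponential lower bound are inherited. The only step demanding any care is the case analysis for the equality gadget, which is routine; I do not foresee any genuine obstacle.
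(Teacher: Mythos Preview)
Your argument is correct but follows a different route from the paper's. The paper chains three known reductions: it first applies Tovey's result to get a bounded-occurrence instance of \textsc{3-Sat}, then Schaefer's clause gadget to pass to (not necessarily positive) \textsc{1-in-3-Sat} while preserving the occurrence bound, and finally a small custom gadget that replaces each variable $x$ by a pair $x_0,x_1$ (forced to be complementary by three extra clauses) to make all literals positive; the final bound of~$6$ comes from this last step. You instead black-box the reduction \textsc{3-Sat} $\to$ \textsc{Pos-1-in-3-Sat} up front and then bound occurrences directly inside \textsc{Pos-1-in-3-Sat} by the classical ``split into copies joined by equality gadgets'' trick. Your equality gadget $\{x,u,v\},\{y,u,v\}$ is clean and the case analysis is correct; in fact your construction gives at most $3$ occurrences per variable, so the bound~$6$ is comfortably met. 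Both routes are linear-size and hence transfer the ETH lower bound; the paper's version leans more heavily on citable black boxes, while yours is more self-contained for the occurrence-bounding step and yields a sharper occurrence bound.
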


Now we are ready to prove the following.

\begin{proposition}\label{prop:forbiddenforests}
Let $k \geq 5$ be an odd integer. 
For each $g$, the \coloring{W_k} problem is \NP-hard in graphs in $\cX_g$.
Furthermore, the problem cannot be solved in subexponential time, unless the ETH fails.
\end{proposition}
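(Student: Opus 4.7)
I reduce from \textsc{Pos-1-in-3-Sat} with each variable in at most six clauses, which by \cref{thm:oneinthreesat} is \NP-hard and cannot be solved in subexponential time under ETH. Given an instance $(X,\cC)$ I construct a graph $G'\in\cX_g$ such that $G'\to W_k$ iff the formula is satisfiable. The structural fact driving the reduction is that every triangle of $W_k$ equals $\{0,i,i{+}1\}$ for some $i$; hence in any homomorphism $\varphi:G'\to W_k$ and any triangle $\{u_1,u_2,u_3\}$ of $G'$, exactly one of $\varphi(u_1),\varphi(u_2),\varphi(u_3)$ equals $0$. Declaring a variable to be \emph{true} when its representatives are mapped to $0$, a triangle on three representatives encodes exactly the 1-in-3 clause constraint.

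For each clause $C=\{x,y,z\}$ I introduce a clause triangle on output vertices $v^x_C, v^y_C, v^z_C$. For each variable $x$ occurring in $d\le 6$ clauses I link the outputs $v^x_1,\ldots,v^x_d$ by a constant-size \emph{variable gadget} built from chains of diamonds of some constant length $\ell>k/2$; by iterating \cref{obs:chain-endpoints}, the gadget enforces that in any homomorphism $\varphi:G'\to W_k$ either all $d$ outputs of $x$ are mapped to $0$ or all are mapped to values in $[k]$, and in the latter case every output can independently take any value in $[k]$ with the gadget remaining extendable. Moreover the gadget is designed so that each output is a degree-$2$ vertex of the gadget whose two gadget-neighbors are the two mutually adjacent apexes of the end-diamond of a dedicated side-chain, and every internal vertex has degree at most $4$ with open neighborhood inducing a subgraph of $2K_2$.

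Given such a gadget, membership of $G'$ in $\cX_g$ for every $g$ is immediate: $G'$ has maximum degree $4$, and the neighborhood of every vertex is a disjoint union of at most two edges (outputs: two triangle-mates plus two chain-apexes; internal chain vertices: two pairs of diamond-apexes; apexes themselves: an induced $P_3$). In particular the independence number of every neighborhood is at most $2$, so $G'$ is claw-free (hence $K_{1,4}$-free) and no vertex of $G'$ has three pairwise independent neighbors, which makes the twin-or-distance clause of the definition vacuously true. Correctness is then routine: a satisfying assignment $\sigma$ yields a homomorphism by mapping the outputs of each true variable to $0$ and, for each clause triangle, selecting any adjacent pair of $C_k$-colors for its two false outputs, then extending every false-variable gadget via \cref{obs:chain-endpoints}; conversely any $\varphi:G'\to W_k$ gives $\sigma(x)=1$ iff the outputs of $x$ are mapped to $0$, and the clause triangles force exactly one true variable per clause. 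Since $\ell,k,g$ and the per-variable gadget size are all constants and $|X|=O(|\cC|)$, we have $|V(G')|=O(|\cC|)$, transferring the ETH lower bound.

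The main obstacle is the construction of the variable gadget for $d\ge 3$, because the natural linking---placing outputs in series along a chain of diamonds---makes every interior output the endpoint of two chains, giving it a neighborhood with an independent set of size three, and then two outputs in the same clause triangle would both have three independent neighbors while sitting at distance $1$ without being twins, violating $\cX_g$. I resolve this by building the variable gadget as a small tree of short chains of diamonds joined through branching cells that propagate the synchronization property of \cref{obs:chain-endpoints} while keeping every vertex at degree at most $4$ with neighborhood $2K_2$; since $d\le 6$ the gadget has constant size, and the outputs are the chain-endpoints at its leaves, giving each output exactly one incident chain and hence the desired $2K_2$-neighborhood.
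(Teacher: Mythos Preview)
There is a genuine gap, and in fact your approach cannot work for $k=5$ at all.

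You claim that in your constructed graph $G'$ the independence number of every open neighbourhood is at most $2$, i.e., that $G'$ is claw-free, and you use this to make the twin-or-distance clause in the definition of $\cX_g$ vacuous. But the proposition covers $k=5$, and the very point of this paper (\cref{thm:mainthm}, building on Feder and Hell's result for line graphs) is that \coloring{W_5} is polynomial-time solvable in $S_{2,1,1}$-free graphs, hence in particular in claw-free graphs. A polynomial-time reduction from an \NP-hard problem to \coloring{W_5} on claw-free instances would therefore prove $\P=\NP$. Consequently no gadget with the properties you postulate for the ``branching cell'' can exist: either it fails to synchronise the outputs, or it fails to be extendable, or it necessarily creates a vertex with three independent neighbours. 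Your proposal asserts the existence of such a cell without constructing it; the argument above shows the assertion is false for $k=5$.

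This is exactly why the paper's reduction does \emph{not} try to avoid vertices with three independent neighbours. Instead it introduces, for each occurrence of a variable, a small ``crown'' gadget that deliberately contains a pair of such vertices, but arranges them as true twins (so $N[u]=N[v]$). Distinct crowns are then separated by $\ell$-chains of diamonds with $\ell\ge g$, guaranteeing that any two vertices with three independent neighbours lying in different crowns are at distance at least $g$. Thus the twin-or-distance clause of $\cX_g$ is satisfied non-vacuously, and the resulting graph is \emph{not} claw-free. If you want to repair your argument, you will need a concrete synchronising gadget that either produces twins with three independent neighbours (as the paper does) or otherwise exploits the distance parameter $g$; you cannot simply eliminate claws.
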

\begin{proof}
Let $\ell=\max\{g,(k+1)/2\}$.
We reduce from \textsc{Pos-1-in-3-SAT}, such that each variable occurs in clauses at most 6 times, see \cref{thm:oneinthreesat}.
Consider an instance with variables $X=\{x_1,\ldots,x_n\}$ and clauses $\cC=\{C_1,\ldots,C_m\}$.

For each $x \in X$ by $p_x$ we denote the number of all occurrences of $x$ in clauses.
Also, for each $x \in X$ we define an arbitrary order of the occurrences of the variable $x$.

We need to construct an instance $G$ of \coloring{W_k}, such that $G \to W_k$ if and only if there exists an assignment $\sigma:X \to \{0,1\}$ such that exactly one literal from each clause is mapped to $1$.
 
We introduce two types of gadgets, variable gadget, and clause gadget.
A variable gadget for the variable $x$ is a pair $(F(x),W(x))$, such that $F(x)$ is a graph, and $W(x) = \{w_1(x),\ldots,w_{k_x}(x)\} \subseteq V(F)$. 
Graph $F$ consists of $p_x$ copies of a crown (see \cref{fig:var-gadget} (left)), and $p_x-1$ copies of the $\ell$-chain of diamonds, with endpoints identified with the vertices of the crowns as shown on \cref{fig:var-gadget}.
Observe that vertices of $F(x)$ can be partitioned into three independent sets $F_0,F_1,F_2$.
Vertex $w_i(x) \in W$ corresponds to $i$-th occurrence of $x$ in clauses.
We observe that and for any homomorphism $\varphi:F \to W_k$, and for each $v_i,v_j \in F_1$, we have $\varphi(v_i)=0$ if and only if $\varphi(v_j)=0$, i.e., either all vertices from $F_0$ are mapped to $0$, or none of them.
In particular, as $W(x) \subseteq F_0$, either all vertices from $W(x)$ are mapped to $0$, or none of them.
Intuitively, mapping vertices form $W(x)$ to $0$ will correspond to setting the corresponding variable $x$ true (i.e.,to $1$).

\begin{figure}[htb]
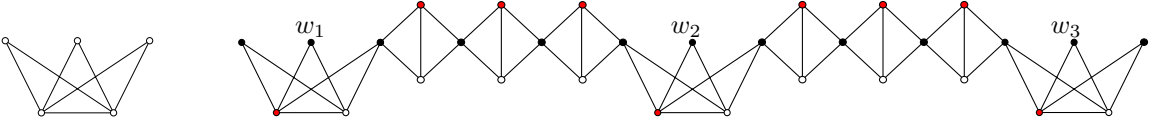

\centering
\includegraphics[page=6, scale=0.85]{figures} \hskip 1cm
\includegraphics[page=8]{figures}
\caption{A crown (left) and a variable gadget for $x \in X$ (right). Here, $p_x=4$, and $\ell=3$.
We depict vertices from $F_0,F_1,F_2$, respectively, by black, red and white.
Observe that in a homomorphism form $F$ to $W_k$ either all vertices from $F_0$ are mapped to $0$, or none of them.} \label{fig:var-gadget}
\end{figure}

For a clause $C$, the clause gadget $K(C)$ is a copy of $K_3.$
Each vertex of $K(C)$ corresponds to one literal of the clause $C$.
Recall that in every homomorphism $h:K_3 \to W_k$, exactly one vertex of $K_3$ must be mapped to $0$.

We construct $G$ as follows. 
For every variable we introduce a variable gadget, and for every clause we introduce a clause gadget. 
Let $C=(u_1,u_2,u_3)$ be a clause, and let $c_1,c_2,c_3$ be the vertices of $K(C)$, corresponding, respectively, to the literals $u_1,u_2,u_3$.
Assume $u_1$ ($u_2,u_3$, respectively) is an $i$-th ($j$-th, $p$-th, respectively) occurrence of some variable $x$ ($y,z$, respectively).
We introduce $g$-chain of diamonds $Q$, and identify one endpoint of $Q$ with $u_1$, and another with $w_i(x)$ ($u_2$ and $w_j(y)$, and $u_3$ and $w_p(z)$, respectively), see \cref{fig:cla-gadget}.
That concludes the construction of $G$.

\begin{figure}[htb]
\centering
\includegraphics[page=9]{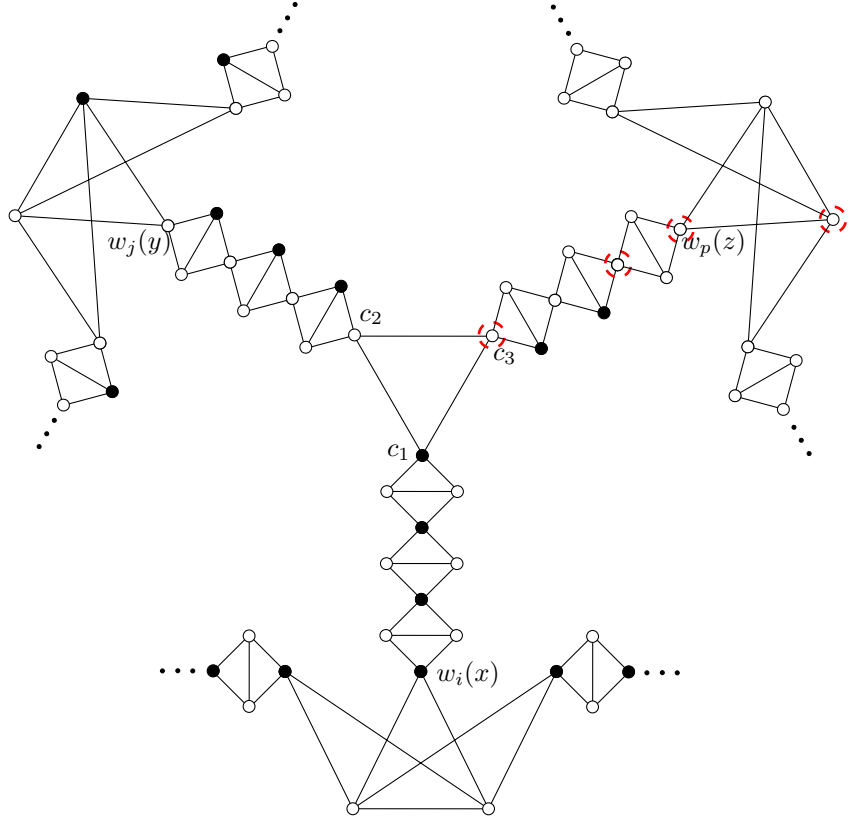}
\caption{Connecting gadgets of variables $x,y,$ and $z$ with a clause gadget $C=(u_1,u_2,u_3) \in \cC$ for $\ell=3$.
Black vertices are vertices mapped to $0$ if $x$ is true, and $y,z$ are false, 
Also, four vertices marked red correspond to four types of degree-4 vertices in $G$.} \label{fig:cla-gadget}
\end{figure}

Given a homomorphism $\varphi:G \to W_k$, we define the assignment $\sigma:X \to \{0,1\}$ such that $\sigma(x)=1$ if and only if the vertices from set $W(x)$ are mapped to zero.
Note that if $C=(u_1,u_2,u_3)$ is a clause gadget, exactly one of the vertices of $K(C)$, say $c_1$ is mapped to $0$.
That corresponds to setting the variable $u_1$ true, and the remaining variables that occur in $C$ false.
It is straightforward to verify that this way exactly one literal from each clause is mapped to true by $\sigma$.

Conversely, given the assignment $\sigma:X \to \{0,1\}$ vertices from $W(x)$, we define $\varphi: V(G) \to W_k$ as follows.
Consider the variable gadget $(F(x),W(x))$.
If $\sigma(x)=1$, we map vertices of $F_0$, $F_1$, $F_2$, respectively to $0$, $1$ and $2$.
Otherwise, (i.e., if $\sigma(x)=0)$, we map them respectively to $1$, $0$ and $2$.
Now let $K_3(C)$ be a clause gadget, with vertices $c_1,c_2,c_3$ corresponding, respectively, to variables $x,y,z$.
By the properties of $\sigma$, exactly one of the variables, say $x$, must be mapped to true, while $y$ and $z$ are mapped to false.
We set $\varphi(c_1)=0$, $\varphi(c_2)=1$ and $\varphi(c_3)=2$.

Finally, let $R$ be an $\ell$-chain of diamonds that connects variable gadget $(F(x),W(x))$ with some clause gadget $K(C)$.
Observe that if $\sigma(x) =1$, then endpoints of $R$ are mapped to $0$.
Otherwise, endpoints of $R$ are mapped to elements of $\{1,2\}$. 
By \cref{obs:chain-endpoints}, we can extend $\varphi$ to $V(R)$ in a way that $\varphi$ is still a homomorphism to $W_k$.

We show that $G$ belongs to $\cX_g$. 
Clearly, every vertex of $G$ has at most four neighbors.
To see that $G$ is $K_{1,4}$-free we note that each every vertex $v$ of $G$ that has four neighbors either belongs to some chain of diamonds, or is a vertex from the crown in some variable gadget (see \cref{fig:cla-gadget}).
In both cases $v$ belongs to some triangle, so $G$ does not contain $K_{1,4}$.
Finally, note that if $u$ and $v$ are vertices such that each of them has three independent neighbors, $u$ and $v$ must belong to the copies of crowns in the variable gadgets. 
If $u$ and $v$ belong to the same crown, then, by our construction, $N[u]=N[v]$.
Otherwise, the distance between $u$ and $v$ is at least $2\ell+1 \geq g$, since every path between two different copies of crown must use $2\ell+1$ vertices of some $\ell$-chain of diamonds.

It remains to note that $G$ has $O(\ell(n+m))$ vertices -- at most $(30+15\ell)n$ coming from variable gadgets, and at most $9\ell m$ coming from the clause gadgets and $\ell$-chain of diamonds linking the gadgets.
As $\ell$ is a constant, this gives the desired bound and concludes the proof.
\end{proof}

Let us point out that the degree bound in \cref{prop:forbiddenforests} cannot be improved to $3$.
Indeed, by Brooks' theorem, every connected graph $G$ of maximum degree 3 which is not a $K_4$ can be properly colored with 3 colors. Clearly every 3-colorable graph admits a homomorphism to $W_k$, for every $k \geq 5$.
On the other hand, as $W_k$ is $K_4$-free, $K_4$ does not admit a homomorphism to $W_k$.

Let us summarize the results of this section.
\thmhardness*
\begin{proof}
Let $k \geq 5$ be an odd integer and let $F$ be connected.
If $F$ contains a cycle, then we obtain hardness by applying \cref{prop:forbiddencycles} for $g= |V(F)|+1$;
clearly a graph with girth at least $|V(F)|+1$ cannot contain $F$ as an induced subgraph.

So let us assume that $F$ is a tree which is not path nor a subdivided claw.
This means that $F$ has (i) a vertex of degree at least 4 or (ii) two vertices of degree 3.
In this case we get hardness by applying \cref{prop:forbiddenforests} for $g = |V(F)|+1$.
Indeed, graphs in $\cX_g$ are $K_{1,4}$-free by definition.
So assume that $F$ satisfies (ii), and let $a,b \in V(F)$ be of degree 3.
Clearly, both $a$ and $b$ have three independent neighbors, and $N[a] \neq N[b]$.
However, if $G \in \cX_g$, and $u,v \in V(G)$ are some vertices, each with three independent neighbors, either $N[u]=N[v]$, or the distance between $u$ and $v$ is at least $g$,
thus $F$ is certainly not an induced subgraph of $G$.

The only case left if that $k \geq 7$ and $F$ is a subdivided claw.
However, in this case Feder and Hell~\cite{edgelists} proved hardness already for line graphs,
which are in particular claw-free. Their reduction implies the ETH lower bound.
This completes the proof.
\end{proof}

\newpage
\section{Conclusion}\label{sec:conclusion}
Let us conclude the paper with pointing out some open questions and directions for future research.

\paragraph{Variants of \coloring{W_5} in $S_{a,b,c}$-free graphs.}
Recall that \colorext{W_5} is polynomial-time solvable in $S_{2,1,1,}$-free graphs but \NP-hard in $S_{3,3,3}$-free graphs.
We point out that this leaves an infinite family of open cases, and the minimal ones are $S_{2,2,1}$ and $S_{3,3,3}$.

Initial research shows that \cref{obs:pre-ext-triangle-free} can probably be extended to $S_{2,2,1}$-free graphs,
at least without precolored vertices. Thus there is hope to solve \coloring{W_5} by a reduction to \itte.
However, an analog of \cref{obs:pre-ext-triangle-free} does not hold for $S_{2,2,2}$-free and for $S_{3,1,1,}$-free graphs; see \cref{fig:trianglefree}.

\begin{figure}[htb]
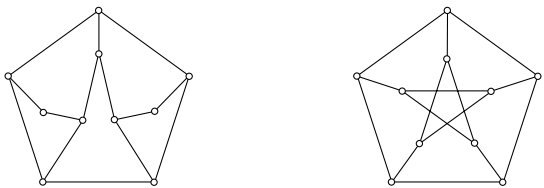

\centering
\includegraphics[page=5]{figures} \hskip 2cm
\includegraphics[page=4]{figures} 
\caption{An $\{S_{2,2,2},K_3\}$-free (left) and an $\{S_{3,1,1},K_3\}$-free (right) graph that are not $C_5$-colorable.} \label{fig:trianglefree}
\end{figure}

Of course this does not mean that some other approach cannot work for \coloring{W_5}.
This leads to a natural question:
\begin{question}
For which $a,b,c$ are \coloring{W_5} and \colorext{W_5} polynomial-time solvable in $S_{a,b,c}$-free graphs?
\end{question}
In particular, it is interesting to investigate whether \coloring{W_5} is \NP-hard in $S_{a,b,c}$-free graphs for any constant $a,b,c$.

\paragraph{Minimal obstructions.}
Recall that by \cref{thm:obstructions} and \cref{thm:hardness},
the only connected graphs $F$ for which we can hope for a finite family of $F$-free minimal $W_k$-obstructions are paths. 
This leads to the following question.

\begin{question}\label{que:obstructions}
For which $k$ and $t$ is the family of $P_t$-free minimal $W_k$-obstructions finite?
\end{question}

Let us point out that for all $k$, the number of $P_4$-free minimal $W_k$-obstructions is finite.
Indeed, $P_4$-free graphs are perfect. Thus if $G$ is $P_4$-free and does not contain $K_4$, 
then $G$ is 3-colorable (and thus it admits a homomorphism to $W_k$).
On the other hand, $K_4$ is a minimal graph that does not admit a homomorphism to $W_k$.
Concluding, $K_4$ is the only $P_4$-free (even $P_3$-free) $W_k$-obstruction.

On the negative side, recall that there exists an infinite family of $P_7$-free 4-vertex-critical graphs~\cite{DBLP:journals/jct/ChudnovskyGSZ20}.
An inspection of this family shows that these graphs are are minimal $W_k$-obstructions for every odd $k \geq 5$.
Note that for each such graph $G$, the fact that an induced subgraph $G'$ of $G$ has a proper 3-coloring implies that $G'$ has a homomorphism to $W_k$. However, it still needed to be verified that $G$ itself does not a homomorphism to $W_k$.

Thus the open cases in \cref{que:obstructions} are $t=5$ and $t=6$.

%
%
%

\bibliographystyle{abbrv}
\bibliography{main}

\newpage
\section*{Appendix. Solving the \textsc{MWM*} problem}\label{sec:appendixMWM}
\setcounter{theorem}{4}
\perfectmatching*
\begin{proof}
Let $(G,U,\wei,k)$ be an instance of \MWM.
The proof consists of two steps: first, we construct a triple $(G',\wei',2k)$, where $G'$ is a graph, and $\wei':E(G') \to \mathbb{N}$, such that $(G,U,\wei,k)$ is a yes-instance of \MWM if and only if there exists a perfect matching $M'$ in $G'$ such that $\wei'(M')\geq 2k$.
Then, for every such triple $(G',\wei',2k)$, we will construct an instance $(G',\wei'',k')$ of \textsc{Maximum Weight Matching} such that there exists a perfect matching $M'$ in $G'$ and $\wei'(M')\geq 2k$ if and only if $(G',\wei'',k')$ is a yes-instance of \textsc{Maximum Weight Matching}.
In other words, we reduce the \MWM problem to the \textsc{Maximum Weight Matching} problem (in two steps) -- to solve an instance $(G,U,\wei,k)$ of \MWM, we construct an instance $(G',\wei'',k')$ of \textsc{Maximum Weight Matching} as described. 

\smallskip
\noindent\textbf{Step 1:} To obtain $G'$, take two disjoint copies $G_0$ and $G_1$ of $G$.
Denote by $u_0$ and $u_1$, respectively, the vertices in $G_0$ and $G_1$ that correspond to $u \in V(G)$. 
We add to $G'$ set $U^*=\{u_0u_1: u \in U\}$. 
That concludes the construction of $G'$.
We set $\wei'(e)=0$ for every $e \in U^*$, and for every $uw \in E(G)$, we set $\wei'(u_0w_0)=\wei'(u_1w_1)=\wei(uw)$.

\begin{claim}\label{cla:matching-one}
$(G,U,\wei,k)$ is a yes-instance of \MWM if and only if there exists a perfect matching $M'$ in $G'$ such that $\wei'(M')\geq 2k$.
\end{claim}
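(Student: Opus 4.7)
The plan is to prove the two implications by direct gadget constructions, exploiting the fact that every edge of $U^*$ carries weight $0$ and therefore serves as a cost-free way to ``park'' vertices that a matching in $G$ does not cover.

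For the direction ``\MWM yes-instance $\Rightarrow$ perfect matching of weight $\ge 2k$'', I would start with a matching $M$ of $G$ covering $U$ with $\wei(M)\ge k$, place two disjoint isomorphic copies $M_0\subseteq E(G_0)$ and $M_1\subseteq E(G_1)$ of $M$ into $G'$, and then add to $M_0\cup M_1$ every edge $u_0u_1\in U^*$ for which $u$ is not covered by $M$. Checking that this produces a perfect matching is a direct case analysis on a vertex $u_i$ of $G'$: if $u$ is covered by $M$ then $u_i$ is covered by the copy $M_i$, and otherwise $u_0u_1$ is available by the definition of $U^*$ (and the fact that $M$ already covers all of $U$); disjointness of the copies guarantees that the result is actually a matching. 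The weight bound $\wei'(M')=2\wei(M)\ge 2k$ is immediate from the definition of $\wei'$.

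For the converse, I would take a perfect matching $M'$ of $G'$ with $\wei'(M')\ge 2k$ and split it as $M'=M'_0\cup M'_1\cup M'_*$, where $M'_i=M'\cap E(G_i)$ and $M'_*=M'\cap U^*$. Projecting each $M'_i$ back onto $G$ through the identification $u_i\mapsto u$ yields two matchings of $G$, whose weights sum to $\wei'(M')\ge 2k$ because $M'_*$ contributes $0$; hence one of them, call it $M$, has weight at least $k$. The structural property of $U^*$---that no $U^*$-edge is incident to a vertex of $U$ in $G'$---forces every vertex of $U$ to be matched inside both $G_0$ and $G_1$, so each of $M'_0$ and $M'_1$ covers $U$, and in particular $M$ does.

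The only subtle point, and the main thing to get right, is the coupling between the definition of $U^*$ and the covering requirement of \MWM: the inter-copy edges should be available precisely for those vertices that are allowed to remain uncovered by $M$, and unavailable for the vertices of $U$, so that ``covering $U$ in $G$'' and ``being perfect in $G'$'' match up on both sides of the equivalence. Once this is arranged correctly, both directions reduce to routine verification.
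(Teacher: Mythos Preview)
Your proposal is correct and follows essentially the same argument as the paper: both build the perfect matching in $G'$ from two copies of $M$ together with the cross-edges $u_0u_1$ for the vertices $u$ left uncovered by $M$, and both recover $M$ in the converse direction by projecting the heavier half of $M'$ back onto $G$ and using that vertices of $U$ have no cross-edge available. Your final paragraph correctly isolates the key point---that $U^*$ must consist of the edges $u_0u_1$ with $u\notin U$---which is exactly how the paper's own proof uses it (the displayed definition $U^*=\{u_0u_1:u\in U\}$ is evidently a typo).
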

\begin{claimproof}
Assume that $(G,U,\wei,k)$ is a yes-instance of \MWM, and let $M$ be a matching in $G$ such that $\wei(M)\geq k$, and $M$ covers $U$.
We define $M'_1=\{u_0w_0,u_1w_1: uw \in M\} \subseteq E(G')$ and $M'_2=\{u_0u_1: M$ does not cover $u\} \subseteq E(G')$, and let $M' =M'_1 \cup M'_2$.
To see that $M'$ is a perfect matching, consider a vertex $u_i \in V(G')$ for some $i \in \{0,1\}$.
Assume that there exists two neighbors $x,y$ of $u_i$ such that $xu_i, yu_i \in M'$.
Clearly, as $u_i$ can have at most one neighbor outside $G_i$, we can assume that $y=w_i$ for some $w_i \in V(G_i)$, so by definition of $M'$, we have $uw \in M$, and in particular, $M$ covers $u$. 
Therefore, if $x=v_i$ for some $v_i \in V(G_i)$, we also have that $uv \in M$, a contradiction with $M$ being a matching.
Otherwise, $x=u_{1-i}$, but the edge $u_iu_{1-i}$ belongs to $M'$ if and only if $M$ does not cover $u$, and that also gives a contradiction.
Hence, $M'$ is a matching. 
To see that $M'$ is perfect, note that if $u \in V(G)$ is incident to an edge $uw \in M$, then $u_iw_i \in M'_1$. 
Otherwise, $M$ does not cover $u$, and then $u_iu_{1-i} \in M'_2$.
Finally, observe that $\sum_{e \in M'} \wei'(e) \geq 2 \sum_{e \in M} \wei(e) \geq 2k$.

Now assume that there exists a perfect matching $M'$ in $G'$ such that $\wei'(M')\geq 2k$.
As for every $e \in U^*$ it holds that $\wei'(e)=0$, we must have $\sum_{e \in M' \cap E(G_1)} \wei'(e) \geq k$ or $\sum_{e \in M' \cap E(G_2)} \wei'(e) \geq k$.
Assume by symmetry that the first case holds. 
We define a subset $M$ of $E(G)$ as follows: $uv \in M$ if and only if $u_1v_1 \in M'$.
Clearly, $M$ is a matching, and $\sum_{e \in M \cap E(G)} \wei(e) = \sum_{e \in M' \cap E(G_1)} \wei'(e) \geq k$.
It remains to verify that for every vertex $u$ of $G$, such that $u \in U$, there exists $w \in N(u)$ such that $uw \in M$.
Since $M'$ is perfect, there exists $x \in V(G')$ such that $u_1x \in M'$.
However, as $u_1$ has no neighbors outside $G_1$, $x=w_1$ for some $w \in V(G)$.
Therefore, $uw \in M$ by definition of $M$.
\end{claimproof}

\noindent\textbf{Step 2:} Let $p=\max_{e \in E(G')\wei'(e)}$, $m=p \cdot |E(G')|+1$, and $n=|V(G')|/2$ (recall that $|V(G')|$ is even).
For every $e \in E(G')$ we define $\wei''(e)=\wei'(e)+m$, and we set $k'=nm+2k$.

\begin{claim}\label{cla:matching-too}
There exists a perfect matching $M'$ in $G'$ such that $\wei'(M')\geq 2k$ if and only if $(G',\wei'',k')$ is a yes-instance of \textsc{Maximum Weight Matching}.
\end{claim}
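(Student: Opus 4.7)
The plan is to apply the classical ``large additive shift'' trick: by adding the same large constant $m$ to the weight of every edge, the contribution of an edge to $\wei''$ dwarfs the $\wei'$-weight of any matching, so that any sufficiently heavy matching under $\wei''$ is forced to be perfect. The key arithmetic observation, which I would establish first, is that $m = p \cdot |E(G')|+1$ is chosen strictly larger than $p \cdot |E(G')|$, which itself upper-bounds the total $\wei'$-weight of any matching in $G'$. Consequently, for any matching $N$ in $G'$ of size $\ell$, we have $\wei''(N) = \wei'(N) + \ell \, m$ with $0 \leq \wei'(N) \leq m-1$.

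For the forward direction, given a perfect matching $M'$ in $G'$ with $\wei'(M') \geq 2k$, I would simply observe that $|M'| = n$ and compute $\wei''(M') = \wei'(M') + nm \geq 2k + nm = k'$, so $M'$ itself certifies the yes-answer for the \textsc{Maximum Weight Matching} instance $(G',\wei'',k')$. For the reverse direction, I would take an arbitrary matching $N$ in $G'$ with $\wei''(N) \geq k'$ and argue that $N$ must be perfect. Indeed, if $|N| = \ell < n$, then $\wei''(N) \leq (m-1) + \ell m \leq nm - 1 < nm \leq k'$ (using that $k \geq 0$, which we may assume without loss of generality, since a no-positive-threshold instance of \MWM reduces to checking whether a matching covering $U$ exists), contradicting the assumption. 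Hence $\ell = n$, so $N$ is perfect, and $\wei'(N) = \wei''(N) - nm \geq k' - nm = 2k$.

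There is no real obstacle here: the argument is a routine application of the shift trick, and the only subtle point is the choice of $m$, which has already been made for us to guarantee that the $\wei'$-weight of any matching is strictly dominated by a single additive shift of $m$. Combining \cref{cla:matching-one} (Step~1) with \cref{cla:matching-too} (Step~2), together with Edmonds' polynomial-time algorithm for \textsc{Maximum Weight Matching}, yields the lemma: an instance $(G,U,\wei,k)$ of \MWM is solved by constructing $(G',\wei'',k')$ in polynomial time and querying \textsc{Maximum Weight Matching} on it.
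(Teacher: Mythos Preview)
Your proof is correct and follows essentially the same additive-shift argument as the paper's proof, with the forward direction identical and the reverse direction only reordered (you first force perfectness, then deduce $\wei'(N) \geq 2k$, whereas the paper interleaves these). Your explicit handling of the assumption $k \geq 0$ is a small improvement in rigor over the paper, which tacitly uses it.
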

\begin{claimproof}
Assume that if there exists a perfect matching $M'$ in $G'$ such that $\sum_{e \in M'} \wei'(e) \geq 2k$.
Since $M'$ is perfect, $|M'|=n$.
Hence,
\begin{align*}
\sum_{e \in M'} \wei''(e) &= \sum_{e \in M'} \left( m+\wei'(e) \right) \geq nm + \sum_{e \in M'} \wei'(e) \geq nm + 2k.
\end{align*}
We claim that the opposite holds: i.e., if $M'$ is a matching in $G'$ such that $\sum_{e \in M'} \wei''(e) \geq nm + 2k$, then $M'$ is perfect, and $\sum_{e \in M'} \wei'(e) \geq 2k$.
Indeed,
\begin{align}\label{eq:matching}
\sum_{e \in M'} \wei'(e) =  \sum_{e \in M'} \left( \wei''(e)-m \right) = \sum_{e \in M'} \wei''(e) - |M'| \cdot m \geq  m(n-|M'|)+2k \geq 2k,
\end{align}
and since $ \sum_{e \in M'} \wei'(e) \leq p \cdot |M'| \leq m-1$, then if $|M'| <n$:
\begin{align*}
\sum_{e \in M'} \wei''(e) =m\cdot |M'| + \sum_{e \in M'} \wei'(e) \leq m(n-1) + (m - 1) = nm -1,
\end{align*}
a contradiction with $\sum_{e \in M'} \wei''(e) \geq  nm+2k$.
Hence, $|M'| \geq n = |V(G')|/2$, i.e., $M'$ is perfect.
\end{claimproof}

By \cref{cla:matching-one} and \cref{cla:matching-too}, $(G,U,\wei,k)$ is a yes-instance of \MWM if and only if $(G',\wei'',k')$ is a yes-instance of \textsc{Maximum Weight Matching}.
Since the latter can be checked in polynomial time by the algorithm of Edmonds~\cite{edmonds_1965}, and $(G',\wei'',k')$ can be constructed in time polynomial in $|V(G)|$, we obtain the answer for $(G,U,\wei,k)$ in polynomial time.
\end{proof}
\section*{Appendix. \textsc{Pos-1-in-3-SAT} with each variable occuring a constant number of times}\label{sec:appendixSAT}
\oneinthreesat*

\begin{proof}
Consider an instance of \textsc{3-Sat} with $n$ variables and $m$ clauses.
First, by the result of Tovey~\cite{DBLP:journals/dam/Tovey84}, we create an equivalent instance of \textsc{3-Sat}, with at most $\Oh(n+m)$ variables and at most $\Oh(m)$ clauses, and such that each variable appears in at most $4$ clauses.
Then, using the reduction by Schaeffer~\cite[Lemma 3.5]{DBLP:conf/stoc/Schaefer78}, we obtain an equivalent instance $\cI$ of \textsc{1-in-3-Sat}, with $\Oh(n+m)$ variables and at most $\Oh(m)$ clauses, and such that each variable appears in at most $4$ clauses.

Finally, we create an instance $\cI'=(X,\cC)$ of \textsc{Pos-1-in-3-Sat} from $\cI$ as follows.
For every variable $x$ of $\cI$, we introduce five variables $x_0,x_1,a,b,c$.
We replace every negative occurrence of $x$ in a clause by $x_0$, and each positive occurrence by $x_1$, and we add three clauses $\{x_0,x_1,a\}, \{x_0,x_1,b\},\{a,b,c\}$. 
It is straightforward to verify that the instance constructed in this way is equivalent to $\cI$, in which each variable appears in a most $6$ clauses. Moreover, the number of variables and clauses is $\Oh(n+m)$.

Hence, an algorithm solving $\cI'$ in time $2^{o(|X| + |\cC|)}$ could be used to solve the initial instance of \textsc{3-Sat} in time $2^{o(n+m)}$, which contradicts the ETH (see e.g., \cite[Theorem 14.4]{DBLP:books/sp/CyganFKLMPPS15}).
\end{proof}


%

\end{document}